\newcommand{\res}{\upharpoonright}
\newcommand{\cu}{{\mathcal U}}
\newcommand{\cv}{{\mathcal V}}
\theoremstyle{plain}
\newtheorem{theorem}{Theorem}[section]
\newtheorem{corollary}[theorem]{Corollary}
\newtheorem{lemma}[theorem]{Lemma}
\newtheorem{proposition}[theorem]{Proposition}
\newtheorem*{theorem*}{Theorem}
\theoremstyle{remark}
\numberwithin{equation}{section}
\begin{document}

\bigskip

\title[Monoid actions and ultrafilter methods in Ramsey theory]{Monoid actions and\\ ultrafilter methods in Ramsey theory}

\author{S{\l}awomir Solecki}

\thanks{Research supported by NSF grant DMS-1266189 and DMS-1700426.}

\address{Department of Mathematics\\
Malott Hall\\
Cornell UNiversity\\
Ithaca, NY 14853, USA}

\email{ssolecki@cornell.edu}

\subjclass[2000]{05D10, 05C55, 20M32, 20M99}


\keywords{Monoid, Ramsey theory}

\begin{abstract}
First, we prove a theorem on dynamics of actions of monoids by endomorphisms of semigroups. Second, 
we introduce algebraic structures suitable 
for formalizing infinitary Ramsey statements and prove a theorem that such statements are implied by the 
existence of appropriate homomorphisms between the algebraic structures. We make a connection between the two themes above, 
which allows us to prove some general Ramsey theorems for sequences. We give a new proof of the Furstenberg--Katznelson 
Ramsey theorem; in fact, we obtain a version of this theorem that is stronger than the original one. We answer in the negative 
a question of Lupini on possible extensions of Gowers' Ramsey theorem.      
\end{abstract}

\maketitle

\tableofcontents

\section{Introduction}
The main point of the paper is studying actions of monoids and establishing a relationship between monoid actions and Ramsey theory. Further, 
we make a connection with partial orders that have earlier proved important in set theoretic considerations. 

In Section~\ref{S:moa}, we study the dynamics of actions of monoids by continuous endomorphisms on compact right 
topological semigroups. We outline now the notions relevant to this study and its outcome: (1) a partial order ${\mathbb Y}(M)$; (2) a class of monoids; and (3) the theorem on dynamics of monoid actions. 

(1) We associate with each monoid $M$ a partial order ${\mathbb Y}(M)$ on which $M$ acts in an order preserving manner. 
We define first the order ${\mathbb X}(M)$ consisting of all principal right ideals in $M$, that is, sets of the form $aM$ for $a\in M$, 
with the order relation $\leq_{{\mathbb X}(M)}$ being inclusion. This order is considered in the representation theory of 
monoids as in \cite{St}. The monoid $M$ acts on ${\mathbb X}(M)$ by left translations. 
We then let ${\mathbb Y}(M)$ consist of all non-empty linearly ordered by $\leq_{{\mathbb X}(M)}$ 
subsets of ${\mathbb X}(M)$. We order ${\mathbb Y}(M)$ 
by end-extension, that is, we let $x\leq_{{\mathbb Y}(M)}y$ if $x$ is included in $y$ and all elements of $y\setminus x$ are larger 
with respect to $\leq_{{\mathbb X}(M)}$ than all elements of $x$. 
The construction of the partial order ${\mathbb Y}(M)$ from the partial order ${\mathbb X}(M)$ 
is a special case of a set theoretic construction going back to Kurepa \cite{Ku}. 
An order preserving action of the monoid $M$ on ${\mathbb Y}(M)$ 
is induced in the natural way from its action on ${\mathbb X}(M)$. 
   
(2) We introduce a class of monoids we call almost R-trivial, which contains the well known class of R-trivial monoids, see \cite{St}, and 
all the monoids of interest to us. In a monoid $M$, by the {\bf R-class} of $a\in M$ we understand, as in \cite{St}, the equivalence class of $a$ 
with respect to the equivalence relation that makes two elements equivalent if the principal right ideals generated by the two elements 
coincide, that is, $b_1$ and $b_2$ are equivalent if $b_1M=b_2M$. We call a monoid $M$  {\bf almost R-trivial} if for each 
element $b$ whose $R$-class has strictly more than one element we have $ab=b$ for each $a\in M$. (A monoid is R-trivial if the R-class of {\em each} element contains only that element.)  
In Section~\ref{Su:exa}, we provide the relevant examples of almost R-trivial monoids.   

(3) In Theorem~\ref{T:main}, which is the main theorem of Section~\ref{S:moa}, we show that each action of a finite almost R-trivial 
monoid by continuous endomorphisms on a compact right topological semigroup contains, in a precise sense, 
the action of $M$ on ${\mathbb Y}(M)$; in fact, it contains a natural action of $M$ by endomorphisms on the semigroup $\langle {\mathbb Y}(M)\rangle$ generated canonically by ${\mathbb Y}(M)$, Corollary~\ref{C:rest}. 
This result was inspired by Ramsey theoretic considerations, 
but it may also be of independent dynamical interest. 

In Section~\ref{S:str}, we introduce new algebraic structures, we call {\bf function arrays}, that are appropriate for formalizing various Ramsey statements 
concerning sequences. We isolate the notions of {\bf basic sequence} and {\bf tame coloring}. In Theorem~\ref{T:abst}, the main 
theorem of this section, we show that 
finding a basic sequence on which a given coloring is tame follows from the existence of an appropriate homomorphism. 
This theorem reduces proving a Ramsey statement to establishing an algebraic property. We introduce a natural notion 
of tensor product of the algebraic structures studied in this section, which makes it possible to strengthen the conclusion 
of Theorem~\ref{T:abst}. 

In Section~\ref{S:conn}, we connect the previous two sections with each other and explore Ramsey theoretic 
issues. In Corollary~\ref{C:yet}, we show that the main result of 
Section~\ref{S:moa} yields a homomorphism required for the main result of Section~\ref{S:str}. This  corollary has various 
Ramsey theoretic consequences. For example, 
we introduce a notion of {\bf Ramsey monoid} and prove that, among finite almost R-trivial monoids $M$, 
being Ramsey is equivalent to linearity of the order ${\mathbb X}(M)$.
We use this result to show that an extension of Gowers' Ramsey theorem \cite{Go} inquired for by Lupini \cite{Lu} is 
false. As other consequences, we obtain some earlier Ramsey results by associating with each of them 
a finite almost R-trivial monoid. For example, we  
show the Furstenberg--Katznelson Ramsey theorem for located words, which is stronger than the original version of the 
theorem from \cite{FK}. Our proof is also different from the one in \cite{FK}. 

We state here one Ramsey theoretic result from Section~\ref{S:conn}, 
which has Furs\-ten\-berg--Katznelson's and Gowers' theorems, \cite {FK}, \cite{Go}, as special instances; see Section~\ref{Su:conc}. 
Let $M$ be a monoid. 
By a {\bf located word over} $M$ we understand a function from a finite non-empty subset of $\mathbb N$ to $M$. For two such words 
$w_1$ and $w_2$, we write $w_1\prec w_2$ if the largest element of the domain of $w_1$ is smaller than the smallest element of the 
domain of $w_2$. In such a case, we write $w_1w_2$ for the located word that is the function whose graph is the union of the graphs 
of $w_1$ and $w_2$. For a located word $w$ and $a\in M$, we write $a(w)$ for the located word that results from multiplying on 
the left each value of $w$ by $a$. Given a finite coloring of all located words, we are interested in producing a 
sequence $w_0\prec w_1\prec \cdots$ of located words, for which we control the color of 
\[
a_0(w_{n_0})\cdots a_k(w_{n_k}),  
\]
for arbitrary $a_0, \dots, a_k\in M$ and $n_0<\cdots <n_k$. 
The control over the color is exerted using the partial order ${\mathbb Y}(M)$ introduced above. 
With each partial order $(P, \leq_P)$, one naturally associates a semigroup $\langle P\rangle$, with 
its binary operation denoted by $\vee$, that is the semigroup generated freely by the elements of $P$ subject to the relations 
\begin{equation}\label{E:pq}
p\vee q = q\vee p = q\;\hbox{ if } p\leq_Pq. 
\end{equation}
We consider the semigroup $\langle {\mathbb Y}(M)\rangle$ produced from the partial order ${\mathbb Y}(M)$ in this manner. 
We now have the following statement, which is proved as Corollary~\ref{C:genseq}. 

\smallskip
\noindent {\em Let $M$ be almost R-trivial and finite. 
Fix a finite subset $F$ of the semigroup 
$\langle {\mathbb Y}(M)\rangle$ and a maximal element $\bf y$ of the partial order ${\mathbb Y}(M)$. 
For each coloring with finitely many colors of all located words over $M$, there exists a sequence  
\[
w_0\prec w_1\prec w_2\prec \cdots
\]
of located words such that the color of 
\[
a_0(w_{n_0})\cdots a_k(w_{n_k}),  
\]
for $a_0, \dots, a_k\in M$ and $n_0<\cdots <n_k$, depends only on the element 
\[
a_0({\bf y})\vee \cdots \vee a_k({\bf y})
\]
of $\langle {\mathbb Y}(M)\rangle$ provided that $a_0({\bf y})\vee \cdots \vee a_k({\bf y})\in F$. }
\smallskip

One can view $a_0({\bf y})\vee \cdots \vee a_k({\bf y})$ as the ``type" of $a_0(w_{n_0})\cdots a_k(w_{n_k})$ and the theorem as asserting that the color of $a_0(w_{n_0})\cdots a_k(w_{n_k})$ depends only on its type. 
In general, 
the element $a_0({\bf y})\vee \cdots \vee a_k({\bf y})$ contains much less information than the located word 
$a_0(w_{n_0})\cdots a_k(w_{n_k})$, 
due partly to the disappearance of $w_{n_0}, \dots, w_{n_k}$ and partly to the influence of relations \eqref{E:pq}.

We comment now on our view of the place of the present work within Ramsey theory. 
A large portion of Ramsey Theory can be parametrized by a triple $(a,b,c)$, where $a, b, c$ are natural numbers or $\infty$ and 
$a\leq b\leq c$. (We exclude here, for example, a very important part of Ramsey theory called structural Ramsey theory, 
for which a general approach is advanced in \cite{HN}.) 
The simplest Ramsey theorems are those associated with $a\leq b<\infty=c$. (For example, for each finite coloring of all 
$a$-element subsets
of an infinite set $C$, there exists a $b$-element subset of $C$ such that all of its $a$-element subsets get the same color.)
These simplest theorems are strengthened in two directions.

Direction 1: $a\leq b\leq c<\infty$. This is the domain of Finite Ramsey Theory. (For example, for each finite coloring of all 
$a$-element subsets
of a $c$-element set $C$, there exists a $b$-element subset of $C$ such that all of its $a$-element subsets get the same color.) 
Appropriate structures for this part of the theory are described in \cite{So}.

Direction 2: $a=b=c=\infty$. This is the domain of Infinite Dimensional Ramsey Theory. 
(For example, for each finite Borel coloring of all infinite element subsets
of an infinite countable set $C$, there exists an infinite subset of $C$ such that all of its infinite subsets get the same color.) 
Appropriate structures for this theory were developed in \cite{To}
and a General Ramsey Theorem for them was proved there.

The frameworks in 1 and 2 are quite different in particulars, but, roughly speaking, 
the General Ramsey Theorems (GRT) in both cases have the same form:

\centerline{GRT: Pigeonhole Principle implies Ramsey Statement.}

Such GRT, reduces proving concrete Ramsey statements to proving appropriate pigeonhole principles.
In 1, pigeonhole principles are either easy to check directly or, more frequently, they are reformulations of Ramsey 
statements proved earlier
using GRT with the aid of easier pigeonhole principles. So it is a self-propelling system. In 2, pigeonhole principles cannot be obtained 
this way and they require separate proofs.
(The vague reason for this is that the pigeonhole principles here correspond to the case $b=c=\infty$ and $a = \hbox{potential }\infty$.)

This paper can be viewed as providing appropriate structures and general theorems that handle proofs of pigeonhole 
principles in 2. These structures are quite different from those in 1 and 2.

The concurrently written interesting paper \cite{Lu2} also touches on the theme of ultrafilter methods in Ramsey theory. 
This work and ours are independent from each other.

\section{Monoid actions on semigroups}\label{S:moa}

The theme of this section is purely dynamical. We study actions of finite monoids on compact right topological semigroups 
by continuous endomorphisms. We isolate the class of almost R-trivial monoids that extends the well studied class of R-trivial monoids. 
We prove in Theorem~\ref{T:main} 
that each action of an almost R-trivial finite monoid $M$ on a compact right topological semigroup by continuous endomorphisms 
contains, in a precise sense, a finite action defined only in terms of $M$. This finite action is an action of $M$ on a partial order 
${\mathbb Y}(M)$ introduced in Section~\ref{Su:Mpa}. An important to us reformulation of Theorem~\ref{T:main} is done 
in Corollary~\ref{C:rest}.

\subsection{Monoid actions on partial orders}\label{Su:Mpa}

A {\bf monoid} is a semigroup with a distinguished element that is a left and right identity. 
By convention, if a monoid acts on a set, the identity element acts as the identity function.

Let $M$ be a monoid.
By an {\bf $M$-partial order} we understand a set $X$ equipped with an action of $M$ and with a partial order $\leq_X$ such that
if $x\leq_X y$, then $ax\leq_X ay$, for $x,y\in X$ and $a\in M$.
Let $X$ and $Y$ be $M$-partial orders. A function $f\colon X\to Y$ is an {\bf epimorphism}
if $f$ is onto, $f$ is $M$-equivariant, and $\leq_Y$ is the image under $f$ of $\leq_X$.
We say that an $M$-partial order $X$ is {\bf strong} if, for all $y\in X$ and $a\in M$,
\[
\{ ax\in X\colon x\leq_X y\} = \{ x\in X\colon x\leq_X ay\}.
\]

For a monoid $M$, consider $M$ acting on itself by multiplication on the left. Set
\begin{equation}\label{E:XM}
{\mathbb X}(M)= \{ aM\colon a\in M\}
\end{equation}
with the order relation being inclusion. Then, ${\mathbb X}(M)$ is an $M$-partial order. We actually have more.

\begin{lemma}
Let $M$ be a monoid. Then ${\mathbb X}(M)$ is a strong $M$-partial order.
\end{lemma}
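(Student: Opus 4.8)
The plan is to simply unwind the definitions: the only content here is checking two set inclusions, and no machinery beyond the concrete description of $\mathbb{X}(M)$ is needed. First I would record the relevant operations explicitly. In $\mathbb{X}(M)$ the action of $a\in M$ sends $bM$ to $(ab)M$, the order $\leq_{\mathbb{X}(M)}$ is inclusion, and for $b,c\in M$ one has $cM\subseteq bM$ exactly when $c\in bM$, i.e. when $c=bm$ for some $m\in M$. Fixing an element $y=bM$ of $\mathbb{X}(M)$ and $a\in M$, the strongness condition to be verified becomes
\[
\{(ac)M\colon c\in M,\ cM\subseteq bM\}=\{cM\colon c\in M,\ cM\subseteq (ab)M\}.
\]

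For the inclusion $\subseteq$ I would argue as follows: if $cM\subseteq bM$, then $c=bm$ for some $m\in M$, hence $ac=abm\in(ab)M$ and therefore $(ac)M\subseteq(ab)M$. So the left-hand set is contained in the right-hand set. (This direction is really nothing more than the order-preserving property already built into the notion of an $M$-partial order.) For the reverse inclusion $\supseteq$, take any principal right ideal $cM$ with $cM\subseteq(ab)M$, so that $c=abm$ for some $m\in M$. Set $d=bm$. Then $dM=bmM\subseteq bM$, and $a\cdot(dM)=(ad)M=(abm)M=cM$, which exhibits $cM$ as a member of the left-hand set. Combining the two inclusions gives the desired equality.

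There is essentially no obstacle: the one thing to keep straight is the bookkeeping of the witnessing element, namely using surjectivity of left multiplication by $b$ (or by $ab$) onto the corresponding principal right ideal to pull the element $c\in(ab)M$ back to the intermediate element $bm\in bM$, and then pushing $bm$ forward through the action. Since everything is automatic once the identifications are written down, the proof is complete as soon as both inclusions above are checked.
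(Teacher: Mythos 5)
Your proof is correct and follows essentially the same route as the paper: the nontrivial inclusion is handled by writing $c=abm$ and pulling back to the witness $bm\in bM$, exactly as in the paper's choice of $c'=bd$. The only difference is that you also spell out the easy inclusion, which the paper leaves implicit.
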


\begin{proof} We need to see that if $cM\subseteq abM$, then there is $c'$ such that $c'M\subseteq bM$ and
$ac'M = cM$. Since $cM\subseteq abM$, we have $c\in abM$, so $c = abd$ for some $d\in M$. Let $c'=bd$. It is easy to check that
this $c'$ works.
\end{proof}

For each finite partial order $X$, let
\begin{equation}\label{E:fr} 
{\rm Fr}(X) = \{ x\subseteq X\colon x\not=\emptyset\hbox{ and } x\hbox{ is linearly ordered by }\leq_X\}.
\end{equation}
The order relation on ${\rm Fr}(X)$ is defined by letting for $x,y\in {\rm Fr}(X)$,
\[
x\leq_{{\rm Fr}(X)} y \Longleftrightarrow x\subseteq y\hbox{ and } i <_X j \hbox{ for all }i\in x\hbox{ and }j\in y\setminus x.
\]
Observe that ${\rm Fr}(X)$ is a {\bf forest}, that is, it is a partial order in which the set of predecessors of each element is linearly ordered. 
(We take this sentence as our definition of the notion of forest.)
As pointed out by Todorcevic, the operation $\rm Fr$ is a finite version of certain constructions from infinite combinatorics of
partial orders \cite{Ku}, \cite{To-1}.

Let $X$ be an $M$-partial order. For $x\in {\rm Fr}(X)$ and $a\in M$, let
\[
ax = \{ ai\colon i\in x\}.
\]
Clearly, $ax\in {\rm Fr}(X)$ and $M\times {\rm Fr}(X)\ni (a,x)\to ax \in {\rm Fr}(X)$ is an action of $M$ on ${\rm Fr}(X)$.

The following lemma is easy to verify.

\begin{lemma}\label{L:for}
Let $M$ be a monoid, and let $X$ be a finite $M$-partial order.
\begin{enumerate}
\item[(i)] ${\rm Fr}(X)$ with the action defined above is a strong $M$-partial order.

\item[(ii)] The function $\pi\colon {\rm Fr}(X)\to X$ given by $\pi(x) = \max x$ is an epimorphism between the two $M$-partial orders.
\end{enumerate}
\end{lemma}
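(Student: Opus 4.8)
The plan is to unwind the definitions and verify each required property directly; the elements of ${\rm Fr}(X)$ are finite nonempty chains, and essentially the only structural fact used is that the $M$-action on $X$ is order preserving. First I would record that the induced $M$-action on ${\rm Fr}(X)$ is order preserving: if $x\leq_{{\rm Fr}(X)}y$, then $ax\subseteq ay$ is immediate, and for $i'\in ax$ and $j'\in ay\setminus ax$ one writes $i'=ai$ with $i\in x$ and $j'=aj$ with $j\in y$; since $j\notin x$ (otherwise $aj\in ax$) we get $i<_Xj$, hence $ai\leq_Xaj$, and as $i'\neq j'$ this gives $i'<_Xj'$. Together with the already noted facts that $\leq_{{\rm Fr}(X)}$ is a partial order and that $(a,x)\mapsto ax$ is an action, this shows ${\rm Fr}(X)$ is an $M$-partial order. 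For strongness, fix $y\in{\rm Fr}(X)$ and $a\in M$; the inclusion $\{ax\colon x\leq_{{\rm Fr}(X)}y\}\subseteq\{z\colon z\leq_{{\rm Fr}(X)}ay\}$ is exactly order preservation applied to $x\leq_{{\rm Fr}(X)}y$.

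For the reverse inclusion, given $z\leq_{{\rm Fr}(X)}ay$, I would set $x=\{j\in y\colon aj\in z\}$. Then $x$ is a nonempty subchain of $y$, so $x\in{\rm Fr}(X)$, and $ax=z$ since every element of $z\subseteq ay$ has the form $aj$ with $j\in y$, and such a $j$ lies in $x$. As $x\subseteq y$, the only remaining point is the end-extension condition for $x\leq_{{\rm Fr}(X)}y$, and I expect this to be the main (if modest) obstacle. If it failed, there would be $i\in x$ and $j\in y\setminus x$ with $j<_Xi$, using that $y$ is a chain and $j\neq i$. Then $ai\in z$ while $aj\in ay\setminus z$, so the end-extension property of $z\leq_{{\rm Fr}(X)}ay$ forces $ai<_Xaj$; but order preservation applied to $j<_Xi$ gives $aj\leq_Xai$, a contradiction. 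This is precisely the mechanism by which $\rm Fr$ manufactures a strong $M$-partial order out of an arbitrary one.

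For part (ii), $\pi$ is well defined because each $x\in{\rm Fr}(X)$ is a finite nonempty chain, and surjective because $\pi(\{v\})=v$ for every $v\in X$. It is $M$-equivariant because $\max(ax)=a(\max x)$: writing $m=\max x$, one has $am\in ax$ and $ai\leq_Xam$ for all $i\in x$, since $a$ preserves $\leq_X$. Finally, $\leq_X$ is the image of $\leq_{{\rm Fr}(X)}$ under $\pi$: if $x\leq_{{\rm Fr}(X)}y$ then $\max x\in y$, so $\max x\leq_X\max y$, giving one inclusion; conversely, if $u\leq_Xv$ then either $u=v$, witnessed by $\{u\}\leq_{{\rm Fr}(X)}\{u\}$, or $u<_Xv$, witnessed by $\{u\}\leq_{{\rm Fr}(X)}\{u,v\}$ with $\{u,v\}$ a chain, which gives the other inclusion and completes the argument.
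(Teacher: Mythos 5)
Your proof is correct and complete; the paper simply states that the lemma ``is easy to verify'' and omits the argument, and your direct verification (order preservation via $j\notin x\Rightarrow aj\notin ax$, strongness via the preimage chain $x=\{j\in y\colon aj\in z\}$, and the explicit witnesses $\{u\}\leq_{{\rm Fr}(X)}\{u,v\}$ for part (ii)) is exactly the intended routine check. No gaps.
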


For a finite monoid $M$, set
\begin{equation}\label{E:YM}
{\mathbb Y}(M) = {\rm Fr}({\mathbb X}(M)).
\end{equation}
By Lemma~\ref{L:for}, ${\mathbb Y}(M)$ is a strong $M$-partial order.

\subsection{Compact right topological semigroups}

We recall here some basic notions concerning right topological semigroups. 

Let $U$ be a semigroup.
As usual, let
\[
E(U)
\]
be the set of all idempotents of $U$.
There is a natural transitive, anti-symmetric relation $\leq^U$ on $U$ defined by
\[
u\leq^U v\Longleftrightarrow uv=vu=u.
\]
This relation is reflexive on the set $E(U)$. So $\leq^U$ is a partial order on $E(U)$.

A semigroup equipped with a topology is called {\bf right topological} if, for each $u\in U$, the function
\[
U\ni x\to xu\in U
\]
is continuous.

In the proposition below, we collect facts about idempotents in compact semigroups needed here. They are
proved in \cite[Lemma 2.1, Lemma~2.3 and Corollary~2.4, Lemma 2.11]{To}.

\begin{proposition}\label{P:fa}
Let $U$ be a compact right topological semigroup.
\begin{enumerate}
\item[(i)] $E(U)$ is non-empty.

\item[(ii)] For each $v\in E(U)$ there exists a minimal with respect to $\leq^U$ element $u\in E(U)$ with $u\leq^U v$.

\item[(iii)] For each minimal with respect to $\leq^U$ element $u\in E(U)$ and each right ideal $J\subseteq U$, there exists
$v\in J\cap E(U)$ with $uv=u$.
\end{enumerate}
\end{proposition}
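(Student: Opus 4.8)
These three statements are classical, and the plan is to recover them from the Ellis--Namakura argument together with a short analysis of minimal left ideals, bearing in mind that in a right topological semigroup only the right translations $x\mapsto xu$ are continuous (and that, $U$ being compact Hausdorff, continuous images of compact sets are closed). For (i) I would argue as Ellis and Namakura do: among the non-empty closed subsemigroups of $U$, every chain has the intersection as a lower bound, and that intersection is non-empty by compactness and the finite intersection property and is again a closed subsemigroup; so by Zorn's lemma there is an inclusion-minimal non-empty closed subsemigroup $S$. Fix $s\in S$. Then $Ss$ is a non-empty closed subsemigroup contained in $S$ (closed because it is the image of the compact set $S$ under $x\mapsto xs$), so $Ss=S$, whence $xs=s$ for some $x\in S$. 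The set $\{x\in S:xs=s\}$ is then also a non-empty closed subsemigroup of $S$, hence equals $S$; in particular $s^2=s$.

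The engine for (ii) and (iii) is that minimal left ideals exist and are closed: for $t$ in a left ideal $L$, the set $Ut$ is a left ideal contained in $L$ and is closed (the image of compact $U$ under $x\mapsto xt$), so Zorn's lemma applied to closed left ideals produces a minimal left ideal $L_0\subseteq L$, which, being a closed subsemigroup, has an idempotent by (i). I would also record two facts about a minimal left ideal $L_0$. First, if $e\in L_0$ is idempotent, then $\ell e=\ell$ for every $\ell\in L_0$: minimality gives $Ue=L_0$, so $\ell=ae$ for some $a\in U$ and $\ell e=aee=ae=\ell$; in particular $uv=u$ whenever $u,v$ are idempotents of $L_0$. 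Second, if $u$ is $\leq^U$-minimal in $E(U)$, then $Uu$ is a minimal left ideal: given a non-empty left ideal $L\subseteq Uu$, pick $t\in L$, take an idempotent $e$ of the closed subsemigroup $Ut\subseteq L$, note $eu=e$ (since $e\in Uu$), check that $ue$ is idempotent with $ue\leq^U u$, conclude $ue=u$ by minimality of $u$, and deduce $u=ue\in Ut\subseteq L$, so $L=Uu$.

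With these in hand, (ii) and (iii) are short. For (ii): given $v\in E(U)$, pick a minimal left ideal $L_0\subseteq Uv$ with idempotent $w$; since $w\in Uv$ we have $wv=w$, so $u:=vw$ lies in $L_0$, is idempotent (using $wv=w$ and $w^2=w$), and satisfies $uv=vu=u$, that is, $u\leq^U v$. For minimality, if $u'\in E(U)$ with $u'\leq^U u$, then $u'=u'u\in Uu=L_0$, so $Uu'=L_0$, hence $u=du'$ for some $d\in U$, and then $uu'=du'u'=du'=u$ while $uu'=u'$ by assumption, so $u=u'$. For (iii): let $u$ be $\leq^U$-minimal in $E(U)$, so $L_0:=Uu$ is a minimal left ideal (by the second fact) and is closed; given a right ideal $J$, the set $Ju$ is non-empty and lies in $J\cap L_0$ (because $Ju\subseteq JU\subseteq J$ and $Ju\subseteq Uu=L_0$), so fix $t\in Ju$. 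Then $L_0t$ is a subsemigroup of $L_0$ and is closed (the image of compact $L_0$ under $x\mapsto xt$), so by (i) it contains an idempotent $e=s_0t$ with $s_0\in L_0$. Put $v:=ts_0\in L_0$. Using that $e$ is idempotent and that $te=t$ (first fact), $v^2=t(s_0t)s_0=tes_0=(te)s_0=ts_0=v$, so $v$ is idempotent; $v=ts_0\in JU\subseteq J$ since $t\in J$; and $uv=u$ by the first fact, $u$ and $v$ being idempotents of $L_0$.

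The only real obstacle is the asymmetry built into ``right topological'': because only $x\mapsto xu$ is continuous, the compactness steps---existence of minimal ideals, and presence of idempotents in images of compact sets---are available only on the right. This is why one routes everything through minimal \emph{left} ideals $Uu$, which are closed, rather than minimal right ideals, which in general are not; and it is why in (iii) one forms $L_0t=\{xt:x\in L_0\}$, a continuous image of a compact set, rather than the set $tL_0$, which is the subset of $J$ one might first reach for but which need not be closed.
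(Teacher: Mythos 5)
Your proof is correct, and since the paper does not prove Proposition~\ref{P:fa} itself but only cites \cite{To} (Lemmas 2.1, 2.3, Corollary 2.4, Lemma 2.11), your argument is exactly the standard Ellis--Namakura/minimal-left-ideal route that those references follow. The closing remark about why everything must be channelled through closed left ideals $Uu$ rather than right ideals in a right topological semigroup is apt and correctly identifies the only delicate point.
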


If $U$ is equipped with a compact topology, that may not interact with multiplication in any way, then there exists
the smallest under inclusion compact two-sided ideal of $U$, see \cite{HS}. So, for a compact right topological semigroup $U$, let
\[
I(U)
\]
stand for the smallest compact two-sided ideal with respect to the compact topology on $U$.

\subsection{Almost R-trivial monoids}\label{Su:exa}

Two elements $a,b$ of a monoid $M$ are called {\bf R-equivalent} if $aM=bM$. Of course, by an {\bf R-class} of $a\in M$ we understand
the set of all elements of $M$ that are R-equivalent to $a$. A monoid $M$ is called {\bf R-trivial} if each R-class has exactly one element,
that is, if for all $a,b\in M$, $aM=bM$ implies $a=b$. This notion with an equivalent definition was introduced in \cite{Sc}. For the role
of R-trivial monoids in the representation theory of monoids see \cite[Chapter 2]{St}.

Note that if $M$ is R-trivial, then the partial order ${\mathbb X}(M)$
can be identified with $M$ taken with the partial order $a\leq_M b$ if and only if $a\in bM$.
We call a monoid $M$ {\bf almost R-trivial} if, for each $b\in M$ whose R-class has more than one element, we have $ab=b$
for all $a\in M$.

We present now examples of almost R-trivial monoids relevant in Ramsey theory.

{\bf Examples.} {\bf 1.} Let $n\in {\mathbb N}$, $n>0$.
Let
\[
G_n
\]
be $\{ 0, \dots, n-1\}$ with multiplication defined by
\[
i\cdot j = \min (i+j, n-1).
\]
We set $1_{G_n} =0$.

The monoid $G_n$ is R-trivial since, for each $i\in G_n$, we have $iG_n = \{ i, \dots, n-1\}$.

The monoid $G_n$ is associated with Gowers' Ramsey theorem \cite{Go}, see also \cite{To}.

{\bf 2.} Fix $n\in {\mathbb N}$, $n>0$. Let
\[
I_n
\]
be the set of all non-decreasing functions
that map $n$ onto some $k\leq n$. These are precisely the non-decreasing functions $f\colon n\to n$ such that $f(0)=0$ and
$f(i+1)\leq f(i)+1$ for all $i<n-1$. The multiplication operation is composition and $1$ is the identity function from $n$ to $n$.

The monoid $I_n$ is R-trivial. To see this,
let $f, g\in I_n$ be such that $f\in gI_n$ and $g\in fI_n$, that is, $f=g\circ h_1$ and $g=f\circ h_2$, for some $h_1, h_2\in I_n$.
It follows from these equations that $f(i)\leq g(i)$, for all $1\leq i\leq n$, and $g(i)\leq f(i)$, for all $1\leq i\leq n$. Thus, $f=g$.

The monoid $I_n$ is associated with Lupini's Ramsey theorem \cite{Lu}.

{\bf 3.} Fix two disjoint sets $A, B$, and let $1$ not be an element of $A\cup B$.
Let
\[
J(A,B)
\]
be $\{ 1\} \cup A\cup B$. Define multiplication on $J(A,B)$ by letting, for each $c\in A\cup B$,
\[
\begin{split}
&c\cdot a = c, \hbox{ if }a\in A;\\
&c\cdot b = b,\hbox{ if } b\in B.
\end{split}
\]
Of course, we define $1 \cdot c = c\cdot 1 = c$ for all $c\in J(A,B)$. We leave it to the reader to check that so defined multiplication
is associative.

The monoid $J(A,B)$ is almost R-trivial. Indeed, a quick check gives, for $a\in A$ and $b\in B$,
\[
aJ(A,B) = \{ a\}\cup B,\; bJ(A,B) = B,\; 1J(A,B)= J(A,B).
\]
Thus, the only elements of $J(A,B)$, whose R-classes can
possible have size bigger than one, are elements of $B$. But for all $c\in J(A,B)$ and $b\in B$, we have $cb=b$. It follows that
$J(A,B)$ is almost R-trivial (and not R-trivial if the cardinality of $B$ is strictly bigger than one).

The monoid $J(\emptyset, B)$ for a one element set $B$ is associated with Hindman's theorem, see \cite{To}, and for arbitrary finite $B$
with the infinitary Hales--Jewett theorem, see \cite{To}.
For arbitrary finite $A$ and $B$, $J(A,B)$ is associated with the Furstenberg--Katznelson theorem \cite{FK}.

\subsection{The theorem on monoid actions}\label{Su:theomain}

\indent {\em In the results of this section, we adopt the following conventions:
\begin{enumerate}
\item[---] $U$ is a compact right topological semigroup;

\item [---] $M$ is a finite monoid acting on $U$ by continuous endomorphisms.
\end{enumerate}}

The following theorem is the main result of this section. 

\begin{theorem}\label{T:main}
Assume $M$ is almost R-trivial. There exists a function $g\colon {\mathbb Y}(M)\to E(U)$ such that
\begin{enumerate}
\item[(i)] $g$ is $M$-equivariant;

\item[(ii)] $g$ is order reversing with respect to $\leq_{{\mathbb Y}(M)}$ and $\leq^U$;

\item[(iii)] $g$ maps maximal elements of ${\mathbb Y}(M)$ to $I(U)$.
\end{enumerate}

Moreover, if ${\mathbb X}(M)$ has at most two elements, then $g$ maps maximal elements of ${\mathbb Y}(M)$ to minimal 
elements of $E(U)$. 
\end{theorem}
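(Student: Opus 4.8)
The plan is to build the function $g$ by induction on the "height" of elements of ${\mathbb Y}(M) = {\rm Fr}({\mathbb X}(M))$, exploiting the forest structure: an element $x\in{\mathbb Y}(M)$ is a finite $\leq_{{\mathbb X}(M)}$-chain of principal right ideals, and its predecessors in the forest order are exactly its proper initial segments. I would start by assigning to the minimal elements of ${\mathbb Y}(M)$ — the singletons $\{aM\}$ — idempotents of $U$, and then extend down the forest so that each end-extension step $x\leq_{{\mathbb Y}(M)}y$ produces an idempotent $g(y)\leq^U g(x)$. Concretely, for a singleton $\{M\}$ (the bottom of ${\mathbb X}(M)$, coming from $a=1$), pick any idempotent of $U$ using Proposition~\ref{P:fa}(i); $M$-equivariance then forces the value on $\{aM\}$ to be the image of that idempotent under the endomorphism given by the action of $a$ — here one must check this is well-defined, i.e. depends only on the ideal $aM$ and not on $a$, which is exactly where almost R-triviality enters (if $aM=bM$ with $a\neq b$ then $b$ is in a nontrivial R-class, so $cb=b$ for all $c$, and one uses this to reconcile the two candidate values). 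To go from $g(x)$ to $g(y)$ when $y$ covers $x$, I would use Proposition~\ref{P:fa}(ii) to find a minimal idempotent below $g(x)$ when $y$ is maximal, or more generally find an idempotent $v\leq^U g(x)$ lying in an appropriate right ideal via Proposition~\ref{P:fa}(iii), chosen so that the equivariance constraints imposed by the $M$-action remain simultaneously satisfiable.

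For the "moreover" clause — the case $|{\mathbb X}(M)|\le 2$ — the point is that ${\mathbb Y}(M)$ is then very simple: ${\mathbb X}(M)$ is either a single point or a two-element chain $M \supsetneq aM$, so the maximal chains in ${\rm Fr}({\mathbb X}(M))$ have length at most two, and the maximal elements of ${\mathbb Y}(M)$ are the full chains. I would show $g$ sends such a maximal element to a $\leq^U$-minimal idempotent of $U$, hence to a minimal element of $U$. The mechanism: when $|{\mathbb X}(M)|=1$ apply Proposition~\ref{P:fa}(ii) directly to get a $\leq^U$-minimal idempotent; when ${\mathbb X}(M)$ is the two-chain, the bottom singleton $\{M\}$ gets some idempotent $v$, then the maximal element $\{M,aM\}$ should get an idempotent $\leq^U av$ that is $\leq^U$-minimal — again Proposition~\ref{P:fa}(ii) applied inside $U$ below $av$ — and one checks via minimality of idempotents and part (iii) of the theorem (which already places maximal elements in $I(U)$) that such a minimal idempotent is a minimal element of the semigroup $U$ in the sense of $\leq^U$. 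The reason this works only for $|{\mathbb X}(M)|\le 2$ is that with longer chains the equivariance web can force intermediate values that obstruct pushing all the way to a globally minimal idempotent.

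I expect the main obstacle to be the well-definedness and consistency of the equivariant assignment: the action of $M$ on ${\mathbb Y}(M)$ can identify the images of distinct monoid elements (since it factors through ${\mathbb X}(M)$), and simultaneously the orbit of a single chain under $M$ can be large, so the inductive choices of idempotents must be made coherently across an entire $M$-orbit at once rather than element-by-element. Handling this requires isolating, for each orbit, a canonical representative (e.g. a chain whose top is $M$ itself, which is fixed by the action in the relevant sense) and defining $g$ there first, then transporting by the endomorphisms; the almost R-trivial hypothesis is precisely what guarantees that the transported value does not depend on the choice of transporting element, because any ambiguity $aM=bM$ with $a\ne b$ lands in the $ab=b$ regime. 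Verifying order-reversal (ii) then reduces to the covering-step construction, and (iii) follows by arranging, at maximal elements, that the chosen idempotent lies in $I(U)$ — using that $I(U)$ is a two-sided ideal and absorbs under the endomorphisms, together with Proposition~\ref{P:fa}(iii) to land an idempotent inside it below the previously chosen value.
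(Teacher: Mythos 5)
Your overall architecture (induct along the forest $\mathbb{Y}(M)={\rm Fr}(\mathbb{X}(M))$, transport values by equivariance, use almost R-triviality to handle collisions $aM=bM$, use minimal idempotents for the ``moreover'' clause) points in the right direction, but two essential mechanisms are missing, and at these points the argument as proposed would fail.

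First, well-definedness is not a consequence of almost R-triviality applied to an arbitrary idempotent. You propose to pick \emph{any} idempotent $u\in E(U)$ for the chain $\{M\}$ and set the value at $\{aM\}$ to be $a(u)$, reconciling $aM=bM$, $a\neq b$, via the identity $cb=b$. But that identity lives in $M$, not in $U$: there is no reason that $a(u)=b(u)$ for a generic idempotent $u$, even when $a$ and $b$ generate the same right ideal. The paper's proof devotes a separate lemma (Lemma~\ref{L:sq}) to manufacturing a \emph{special} idempotent $u_1$ with exactly this coherence property: one shows that all non-identity elements of the submonoid $\{1_M\}\cup B$ (where $B=\{b: ab=b\ \forall a\}$) have a common image $T=a(U)$, which is a compact subsemigroup on which $M$ acts trivially; one takes a $\leq^T$-minimal idempotent $u_0\in T$ and then a $\leq^U$-minimal idempotent $u_1\leq^U u_0$, and proves $a(u_1)=u_0$ for all $a\neq 1_M$. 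Without this choice the map $h(a)=a(u_1)$ does not factor through $\rho(a)=aM$, and your $g$ is simply not defined on $\mathbb{Y}(M)$.

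Second, the descent step cannot be carried out one idempotent at a time with Proposition~\ref{P:fa}. Part (iii) of that proposition requires the starting idempotent to be \emph{minimal} and produces $v$ with $uv=u$, which is not the relation $v\leq^U u$ you need for order reversal (and below a minimal idempotent there is nothing strictly smaller, so you cannot keep descending). More seriously, the choices at each forest level must satisfy equivariance, the order constraints, and membership of the relevant values in $I(U)$ \emph{simultaneously across the whole level}, and a pointwise or orbit-by-orbit selection gives you no handle on this. The paper's Lemma~\ref{L:aux} resolves it by a different device: at each stage it forms a compact subsemigroup $H\subseteq U^F$ cut out by the equivariance, ideal, and partial-product conditions, applies Ellis' theorem to extract an idempotent of $H$ (i.e., a coherent family of values all at once), and at the end defines $g(j)$ as the ordered product $g_F(j_1)\cdots g_F(j_l)$ along the chain of predecessors of $j$; the two-sided relation $g(i)g(j)=g(j)g(i)=g(j)$ is verified from this product form, not from a chain of applications of Proposition~\ref{P:fa}. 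Your proposal contains no substitute for this simultaneous selection, and this is where the real work of the theorem lies. (Your reading of the ``moreover'' clause is closer to the mark: when $|\mathbb{X}(M)|\leq 2$ the paper indeed takes $g=f=h'\circ\pi$ directly, and maximal chains land on the minimal idempotent $u_1$; no appeal to the forest machinery is needed in that case.)
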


We will need the following lemma.

\begin{lemma}\label{L:aux}
Let $F$ be a strong $M$-partial order that is a forest. Assume that $f\colon F\to U$ is $M$-equivariant.
Then there exists $g\colon F\to E(U)$ such that
\begin{enumerate}
\item[(i)] $g$ is $M$-equivariant;

\item[(ii)] $g$ is order reversing with respect to $\leq_F$ and $\leq^U$;

\item[(iii)] $g^{-1}(I(U))$ contains $f^{-1}(I(U))$.
\end{enumerate}
\end{lemma}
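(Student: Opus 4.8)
The plan is to build $g$ by induction on the rank (height) of elements in the forest $F$, using Proposition~\ref{P:fa} to produce idempotents with the required $\leq^U$-comparabilities, while exploiting $M$-equivariance and the ``strong'' hypothesis to make the construction compatible with the $M$-action. Since $F$ is a forest, each element $x$ has a linearly ordered (hence well-defined, finite) set of predecessors; write $\mathrm{rk}(x)$ for the number of predecessors of $x$. The construction will proceed level by level. At the bottom level (minimal elements of $F$), for a minimal $x$ I would apply Proposition~\ref{P:fa}(i),(ii) to choose an idempotent $g(x)\in E(U)$; when $f(x)$ already lies in $I(U)$ I want $g(x)\in I(U)$ as well, which is arranged by first passing to a suitable right ideal and using Proposition~\ref{P:fa}(iii), or more simply by noting $I(U)$ is a compact right topological semigroup in its own right so it has idempotents below any of its elements. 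At the inductive step, given $x$ with immediate predecessor $x'$ (unique, by the forest property) for which $g(x')$ has been defined, I would use Proposition~\ref{P:fa}(ii) to find a $\leq^U$-minimal idempotent below $g(x')$, or rather an idempotent $u\le^U g(x')$ chosen inside an appropriate right ideal (via (iii)) so as to land in $I(U)$ whenever $f(x)\in I(U)$; set $g(x)=u$. This yields (ii) and (iii) directly for the ``skeleton'' of representatives I choose.

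The real issue is $M$-equivariance, since $M$ does not act freely on $F$ and the naive level-by-level choice need not commute with the action. The standard device is to choose, within each $M$-orbit, a single representative $x_0$, define $g(x_0)$ by the idempotent construction above, and then \emph{attempt} to set $g(ax_0)=a\,g(x_0)$ for every $a\in M$. Two things must be checked: first, that this is well-defined, i.e. if $ax_0=bx_0$ then $a\,g(x_0)=b\,g(x_0)$; second, that $a\,g(x_0)$ is still an idempotent (automatic, since $a$ acts by an endomorphism) lying appropriately in the $\leq^U$-order and in $I(U)$ when required. For well-definedness and for order preservation the hypothesis that $F$ is a \textbf{strong} $M$-partial order is exactly what is needed: it controls the set $\{ax : x\le_F y\}$ in terms of $\{x : x\le_F ay\}$, so that the predecessor relation in an orbit is governed by the predecessor relation below $ay$, letting us match the idempotent $g(ax_0)$ built from $g$-values on predecessors of $ax_0$ with $a$ applied to the idempotent built below $x_0$. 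Concretely I would set up the induction on orbits ordered by rank of a representative, and at each orbit verify: if $ax_0\le_F y$ for some already-treated $y=b x_1$, then strongness gives $x_0 \le_F a'y$-type information pulling the comparison back to the representative level, and endomorphism-equivariance of $a$ transports $g(x_0)\le^U g(\text{pred})$ to $a\,g(x_0)\le^U a\,g(\text{pred})=g(ax_0\text{'s predecessor})$.

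For clause (iii), I would carry along the invariant ``$f(x)\in I(U)\Rightarrow g(x)\in I(U)$'' through the induction. The base and inductive idempotent choices should be made inside the right ideal $U\,c$ for a suitable $c$, or directly inside $I(U)$: since $I(U)$ is a compact two-sided ideal, if $g(x')\in I(U)$ then any idempotent $\le^U g(x')$ is in $I(U)$ automatically, and if $f(x)\in I(U)$ while $g(x')\notin I(U)$ one uses Proposition~\ref{P:fa}(iii) with the right ideal $J=I(U)$ (or $J=Uf(x)\subseteq I(U)$) to find $v\in I(U)\cap E(U)$ with $g(x')v=g(x')$, and then replaces the candidate idempotent $u$ by a $\le^U$-minimal idempotent below $g(x')v$, which lies in $I(U)$; one checks $u\le^U g(x')$ still holds. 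Equivariance preserves membership in $I(U)$ because $M$ acts by endomorphisms and $I(U)$ is a two-sided ideal, so $a\,g(x_0)\in I(U)$ whenever $g(x_0)\in I(U)$, and conversely $f(ax_0)=a f(x_0)\in I(U)$ forces the representative-level hypothesis $f(x_0)\in I(U)$ only if $a$ is invertible — so more care is needed here: the invariant should instead be maintained per element, choosing the orbit representative $x_0$ of each orbit to be one \emph{minimizing} rank, and arguing that $f(ax_0)\in I(U)$ together with strongness and the ideal property of $I(U)$ still lets us place $g(ax_0)=a g(x_0)$ in $I(U)$; if this fails for the chosen representative, re-choose the representative within the orbit to be an $x$ with $f(x)\in I(U)$ when the orbit meets $f^{-1}(I(U))$. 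The main obstacle, then, is this bookkeeping: simultaneously making the idempotent choices \emph{$M$-equivariant}, \emph{$\leq^U$-order-reversing}, and \emph{$I(U)$-absorbing}, and the ``strong forest'' hypothesis is precisely the structural input that makes the three compatible.
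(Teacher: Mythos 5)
Your plan founders on exactly the point you flag and defer: well-definedness of $g(ax_0):=a\,g(x_0)$ on orbit representatives. If $ax_0=bx_0$ in $F$ with $a\neq b$ in $M$, nothing forces $a(g(x_0))=b(g(x_0))$ in $U$. The equivariance of $f$ gives $a(f(x_0))=b(f(x_0))$, but your $g(x_0)$ is a \emph{new} idempotent produced by Proposition~\ref{P:fa}, and the actions of $a$ and $b$ on $U$ are unrelated to their actions on $F$ except through $f$. The ``strong'' hypothesis only controls the interaction of the $M$-action with the order on $F$ (it identifies $\{ax: x\leq_F y\}$ with $\{x: x\leq_F ay\}$); it says nothing about the $M$-action on $U$ and so cannot repair this. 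Moreover, since $M$ is a monoid and not a group, the orbit structure itself is ill-behaved (orbits overlap, there is no canonical transversal), and even granting well-definedness you would still need $a(g(x_0))$ to sit $\leq^U$-below the $g$-value of the predecessor of $ax_0$, which presupposes the equivariance you are trying to establish --- a circularity.

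The paper avoids pointwise orbit-by-orbit choices entirely. It runs a recursion over downward closed sets $A\subseteq F$, maintaining a \emph{globally defined} equivariant function $g_A\colon F\to E(U)$ satisfying the order condition only for pairs inside $A$. The extension step first builds a candidate $g'_B$ by multiplying the $g_A$-values along chains of predecessors (an operation that preserves equivariance precisely because $F$ is a strong $M$-forest), and then --- this is the key idea your proposal is missing --- considers the set $H\subseteq U^F$ of \emph{all} functions satisfying equivariance, the ideal condition, the order condition on $B$, and agreement with $g_A$ on $A$. One checks $H$ is a nonempty compact subsemigroup of the right topological semigroup $U^F$ and invokes Ellis' theorem to extract an idempotent of $H$; an idempotent of $U^F$ lying in $H$ is automatically an equivariant function with values in $E(U)$. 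Thus idempotency of values and equivariance are obtained \emph{simultaneously} by an abstract fixed-point argument in a function semigroup, not by propagating individual choices along orbits. Without this device (or an equivalent one), your induction cannot be completed, so the proposal as it stands has a genuine gap.
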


\begin{proof} Let $A\subseteq F$ be downward closed. Assume we have a function $g_A\colon F\to E(U)$ such that (i) and (iii) hold and
additionally, for all $i, j\in A$,
\begin{enumerate}
\item[($\star$)] if  $i <_F j$, then $g_A(j) g_A(i)= g_A(j)$.
\end{enumerate}
Note the condition that the values of $g_A$ are in $E(U)$, so they are idempotents.
Let $B\subseteq F$ be such that $A\subseteq B$ and all the immediate predecessors of elements of $B$ are in $A$. We claim that there
exists $g_B\colon F\to E(U)$ fulfilling (i), (iii),  ($\star$) for all $i, j\in B$, and $g_B\res A = g_A\res A$.

First, define $g'_B\colon F\to U$ by letting, for $j\in F$,
\[
g'_B(j) = g_A(i_k) g_A(i_{k-1}) \cdots g_A(i_1),
\]
where $i_1<_F \cdots <_F i_k$ lists the set $\{ i\in F\colon i\leq_F j\}$ is the increasing order.

We check that $g'_B$ fulfills (i), (iii), and ($\star$) for $i, j\in B$.
Point (i) holds since for each $a\in M$ we have
\begin{equation}\label{E:iii}
\begin{split}
a(g'_B(j)) &= a(g_A(i_k)) a(g_A(i_{k-1}))\cdots a(g_A(i_1))\\
&= g_A(a(i_k)) g_A(a(i_{k-1}))\cdots g_A(a(i_1)) = g'_B(a(j)),
\end{split}
\end{equation}
with the second equality holding since the function $g_A$ is $M$-equivariant and
the third one holding by idempotence of the values of $g_A$ and the fact that $F$ is an $M$-forest. Point (iii) holds
since $I(U)$ is a right ideal and the function $g_A$ fulfills (iii). To check ($\star$) for
$i, j\in B$ with $i<_Fj$, let
\[
i_1<_F \cdots <_F i_k<_F \cdots <_F i_l
\]
list all the predecessors of $j$ in the increasing order so that $i_k=i$ and, of course, $i_l=j$. Then, since $j\in B$,
we have $i_1, \dots, i_{k}\in A$ and, therefore, we get
\[
g'_B(j) = g_A(i_l)\cdots g_A(i_k) = g_A(j)\cdots g_A(i).
\]
By the same computation carried out for $i=j\in A$, we see
\begin{equation}\label{E:row}
g'_B(i) = g_A(i), \hbox{ for }i\in A.
\end{equation}
It follows that
\[
g'_B(j) g'_B(i)= g_A(j)\cdots g_A(i) g_A(i) = g_A(j)\cdots g_A(i) = g'_B(j).
\]
This equality shows that ($\star$) holds for $i, j\in B$. Finally, note that \eqref{E:row} implies that $g'_B\res A = g_A\res A$. Thus, $g'_B$
has all the desired properties.

To construct $g_B$ from $g'_B$,
consider $U^F$ with coordinstewise multiplication and the product topology. This is a right topological semigroup. Define
$H\subseteq U^F$ to consist of all $x\in U^F$ such that
\begin{enumerate}
\item[($\alpha$)] the function $F\ni i\to x_i\in U$ fulfills (i), (iii), and ($\star$) for $i, j\in B$ and

\item[($\beta$)] $x_i = g_A(i)$ for all $i\in A$.
\end{enumerate}

First we observe that $H$ is a subsemigroup of $U^F$. Condition (i) is clearly closed under multiplication. Condition (iii) is closed under multiplication since
$I(U)$ is a two-sided ideal. Condition ($\star$) is closed under multiplication in the presence of ($\beta$) since, for $x,y\in H$ and $i,j\in B$ with $i<_Fj$,
we have $i\in A$ and, therefore,
\[
x_jy_jx_iy_i = x_jy_j g_A(i)g_A(i) = x_jy_j y_iy_i = x_jy_j.
\]
This verification shows that ($\alpha$) is closed under multiplication in the presence of ($\beta$).
Condition ($\beta$) is closed under multiplication since $g_A(i)$ is an idempotent.

Next note that $H$ is compact since all conditions defining $H$ are clearly topologically closed with a possible exception of ($\star$) for $i,j\in B$
with $i<_Fj$. Note that in this case $i\in A$. Since $x\in U^F$ and $i\in A$,
we have $x_i=g_A(i)$, condition ($\star$) translates to $x_j g_A(i) = x_j$ for $i\in A$ and $j\in B$ with $i<_Fj$. This condition
is closed since $U$ is right topological. Finally note that $H$ is non-empty since $g'_B$ is its element.
By Ellis' Lemma~\cite[Lemma 2.1]{To}, $H$ contains an idempotent. Let $g_B\in H$ be such an idempotent. It has all the required properties.

The above procedure describes the passage from $g_A$ to $g_B$ if all immediate predecessors of elements of $B$ are in $A$.

We now define $g_\emptyset\colon F\to E(U)$ that fulfills (i), (iii), and ($\star$) for $A=\emptyset$, with the last condition holding vacuously. Note that $f$ has 
all the properties required of $g_\emptyset$ 
except its values may not be in $E(U)$. To remedy this shortcoming, consider again the compact right topological semigroup $U^F$ with coordinstewise
multiplication and the product topology. Define $H\subseteq U^F$ to consist of all $x\in U^F$ such that
the function $F\ni i\to x_i\in U$ fulfills (i) and (iii) (and, vacuously, ($\star$) for $i, j\in \emptyset$). Then $H$ is non-empty since $f\in H$. As above, 
we check that $H$ is a compact subsemigroup of $U^F$. Let
$g_\emptyset$ be an idempotent in $H$. Clearly, $g_\emptyset$ has the required properties. 

Starting with $g_\emptyset$ and recursively using the procedure of going from $g_A$ to $g_B$ described above, we produce $g_F\colon F\to E(U)$
fulfilling (i), (iii) and ($\star$) for all $i,j\in F$. 

Now define, for $j\in F$, 
\begin{equation}\label{E:fig}
g(j) = g_F(j_1)\cdots g_F(j_l),
\end{equation}
where $j_1<_F \cdots <_F j_l = j$ list all elements of the set $\{ i\in F\colon i\leq_F j\}$. This $g$ is as required by the conclusion of the lemma. 
Keeping in mind that all values of $g_F$ are idempotents, we see that point (i) for $g$ holds by the calculation as in \eqref{E:iii}. Point (iii) for $g$ is clear since 
it holds for $g_F$, $j_l=j$ in formula \eqref{E:fig}, and $I(U)$ is a two-sided, so left, ideal. To see point (ii) for $g$, 
we use an argument similar to one applied earlier in the proof. 
To do this, fix $i\leq_F j$ in $F$, and let 
\[
i_1<_F\cdots <_F i_k = i\;\hbox{ and }\; j_1<_F \cdots <_F j_l=j
\]   
list elements of $F$ that are $\leq_F i$ and $\leq_F j$, respectively. Note that $i_1 = j_1, \dots, i_k=j_k$.
Using ($\star$) for $g_F$ and idempotency of $g_F(i_k) = g_F(j_k)$, we see that 
\[
\begin{split}
g(i)g(j) &= g_F(i_1)\cdots g_F(i_k) g_F(j_1)\cdots g_F(j_l)\\ 
&= g_F(i_1)\cdots g_F(i_k) g_F(j_k)\cdots g_F(j_l) = g(j), 
\end{split}
\]
while using only ($\star$) for $g_F$ if $i<_Fj$ and ($\star$) and idempotency of $g_F(j)$ if $i=j$, we get
\[
\begin{split}
g(j)g(i) &= g_F(j_1)\cdots g_F(j_l) g_F(i_1)\cdots g_F(i_k)\\ 
&= g_F(j_1)\cdots g_F(j_l)  = g(j). 
\end{split}
\]
Thus, point (ii) is verified for $g$. Note that point (ii) implies that values of $g$ are idempotent, that is, they are elements of $E(U)$. Therefore, we checked that 
$g\colon F\to E(U)$ and (i)--(iii) hold for $g$. 
\end{proof}

\begin{lemma}\label{L:sq}
Assume that $ab=b$, for all $a,b\in M$ with $b\not= 1_M$. Then there exists minimal $u_1\in E(U)$ such that
\begin{enumerate}
\item[(i)] $u_1\in I(U)$

\item[(ii)] $a(u_1) = b(u_1)$, for all $a, b\in M$ with $a\not= 1_M \not= b$.
\end{enumerate}
\end{lemma}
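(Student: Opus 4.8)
The plan is to find an idempotent lying simultaneously in the minimal ideal $I(U)$ and in the ``agreement set''
\[
S=\{\,u\in U: a(u)=b(u)\ \text{for all }a,b\in M\setminus\{1_M\}\,\},
\]
and then to pass from it to a $\leq^U$-minimal one. First note that $S$ is a closed subsemigroup of $U$: it is the intersection of the sets $\{u:a(u)=b(u)\}$, which are closed because the actions are continuous, and it is multiplicatively closed since each $a\in M$ acts as an endomorphism. Hence $S\cap I(U)$, being the intersection of the subsemigroup $S$ with the two-sided ideal $I(U)$, is a two-sided ideal of $S$. The case $M=\{1_M\}$ is immediate: then $S=U$ and (ii) is vacuous, so any $\leq^U$-minimal idempotent of the compact right topological semigroup $I(U)$ works; such an idempotent exists by Proposition~\ref{P:fa}(i),(ii) and lies in $I(U)$ because $I(U)$ is an ideal. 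From now on fix $b_0\in M\setminus\{1_M\}$.

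Since $b_0b_0=b_0$ in $M$, the endomorphism $b_0\colon U\to U$ satisfies $b_0\circ b_0=b_0$, so $W:=b_0(U)$ is a compact subsemigroup with $b_0\res W=\mathrm{id}$; and for every $a\in M$ and $w=b_0(u)\in W$ one has $a(w)=(ab_0)(u)=b_0(u)=w$ because $ab_0=b_0$. Thus $M$ acts trivially on $W$, and in particular $W\subseteq S$. I will also use that for each non-identity $a$ the set $a(I(U))$ is a compact two-sided ideal of $W$; as a homomorphic image of the (completely simple) minimal ideal $I(U)$ it is a simple semigroup, and a simple compact two-sided ideal of $W$ must equal the minimal ideal $I(W)$ of $W$. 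Hence $a(I(U))=I(W)\subseteq W\subseteq S$ for every $a\in M\setminus\{1_M\}$, and the action of $M$ restricts to the identity on $I(W)$.

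The heart of the matter, and the step I expect to be the main obstacle, is to prove $S\cap I(U)\neq\emptyset$. If every element of $M\setminus\{1_M\}$ acts in the same way on $U$ then $S=U$ and there is nothing to do; in general the plan is to exhibit an element of $W\cap I(U)$, which then lies in $S$, i.e.\ some $v\in I(U)$ with $b_0(v)\in I(U)$. If $W\cap I(U)$ were empty then $b_0^{-1}(I(U))$ would be empty, and since $b_0^{-1}(I(U))$ is a compact two-sided ideal of $U$ whenever it is non-empty, this would force $b_0(I(U))\cap I(U)=\emptyset$, that is, $I(W)\cap I(U)=\emptyset$. I would then derive a contradiction from this, exploiting that $I(U)$ and $I(W)$ are completely simple, that every non-identity element of $M$ fixes $I(W)$ pointwise and maps $I(U)$ onto $I(W)$, and that $b_0$ is an idempotent endomorphism retracting $U$ onto $W$: for an idempotent $v$ of $I(U)$ the element $b_0(v)$ is a common value of the action on the whole $M\setminus\{1_M\}$-orbit, and matching this against the Rees structure of the kernel should be incompatible with $I(W)$ and $I(U)$ being disjoint. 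Granting $S\cap I(U)\neq\emptyset$, it is a non-empty compact right topological semigroup, hence contains an idempotent $e_0$ by Proposition~\ref{P:fa}(i), with $e_0\in I(U)$ and $e_0\in S$.

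Finally I would upgrade $e_0$ to a $\leq^U$-minimal idempotent while keeping it inside $S\cap I(U)$. By Proposition~\ref{P:fa}(ii) pick a $\leq^U$-minimal $u_1\in E(U)$ with $u_1\leq^U e_0$; then $u_1=e_0u_1e_0$, so $u_1\in I(U)$ because $I(U)$ is an ideal, which is (i), and $u_1$ is minimal by construction. For (ii), fix a non-identity $a$ and set $w:=a(e_0)$; since $e_0\in S$ this $w$ does not depend on $a$, and $w\in a(I(U))=I(W)$ is an idempotent. Applying $a$ to $u_1=e_0u_1e_0$ yields $a(u_1)=w\,a(u_1)\,w$; since $I(W)$ is completely simple, $wI(W)w$ is a group with identity $w$, and $a(u_1)\in a(I(U))=I(W)$ is an idempotent lying in $wI(W)w$, hence $a(u_1)=w$. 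As $w$ is independent of $a$, we get $a(u_1)=b(u_1)=w$ for all $a,b\in M\setminus\{1_M\}$, which is (ii). This would complete the proof; the only genuinely nontrivial point is the non-emptiness of $S\cap I(U)$.
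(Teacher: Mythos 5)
Your setup (the agreement set $S$, the common image $W=b_0(U)$ on which $M$ acts trivially) matches the start of the paper's proof, where $W$ is called $T$. But the proposal has a genuine gap at exactly the step you flag as ``the heart of the matter,'' and the route you sketch for closing it cannot work. You reduce everything to showing $W\cap I(U)\neq\emptyset$ and propose to derive a contradiction from its failure; however, $W\cap I(U)=\emptyset$ is perfectly possible under the hypotheses of the lemma. For instance, let $T_0=\{w_1,w_2\}$ be the two-element meet-semilattice with $w_2<w_1$, let $U=T_0\times T_0$ (a finite, hence compact right topological, semigroup), and let $M=J(\emptyset,\{b,c\})$ act by $b(s,t)=(s,w_1)$ and $c(s,t)=(s\wedge t,w_1)$. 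These are distinct idempotent endomorphisms with common image $W=T_0\times\{w_1\}$ on which both act as the identity, $ab=b$ holds for all non-identity $b$, yet $I(U)=\{(w_2,w_2)\}$ is disjoint from $W$ --- and the lemma still holds with $u_1=(w_2,w_2)$, $b(u_1)=c(u_1)=(w_2,w_1)\notin I(U)$. So there is no contradiction to be extracted, and the nonemptiness of $S\cap I(U)$ has to be obtained some other way. A secondary problem is that several of your steps (simplicity of $I(U)$, $a(I(U))=I(W)$, ``$wI(W)w$ is a group'') invoke the completely simple/Rees structure of the smallest ideal, which the paper does not establish for compact \emph{right} topological semigroups; it only provides Proposition~\ref{P:fa}, and $I(U)$ is defined merely as the smallest \emph{compact} two-sided ideal.

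The paper's argument avoids all of this with one short order-theoretic trick. Take a $\leq^{T}$-minimal idempotent $u_0$ in $T=W$ (Proposition~\ref{P:fa}(i),(ii) applied to the compact subsemigroup $T$), then a $\leq^U$-minimal idempotent $u_1\leq^U u_0$ in $U$; minimality gives $u_1\in I(U)$. Now for any $a\neq 1_M$, applying the endomorphism $a$ to the relations $u_1u_0=u_0u_1=u_1$ gives $a(u_1)\leq^U a(u_0)=u_0$, and $a(u_1)$ is an idempotent lying in $a(U)=T$; minimality of $u_0$ \emph{in $T$} forces $a(u_1)=u_0$ for every such $a$, which is (ii). Note that the common value $u_0$ need not lie in $I(U)$ --- only $u_1$ does --- which is precisely why your attempt to intersect $W$ with $I(U)$ is the wrong target. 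If you want to salvage your write-up, replace the entire ``nonemptiness of $S\cap I(U)$'' discussion with this two-step minimal-idempotent argument; your final paragraph then becomes unnecessary as well.
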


\begin{proof} Observe that, for $a, b\in M\setminus \{ 1_M\} $, since $ba=a$, we have
\[
a(U) = ba(U)= b(a(U))\subseteq b(U).
\]
By symmetry, we see that $a(U)=b(U)$. Let $T$
be the common value of the images of $U$ under the elements of $M\setminus \{ 1_M\} $.
Clearly $T$ is a compact subsemigroup of $U$. Note that
\begin{equation}\label{E:para}
a(u)=u,\,\hbox{ for }u\in T,\, a\in M.
\end{equation}
Let
\[
u_0\in T
\]
be a minimal with respect to $\leq^T$ idempotent.

Let $u_1\in U$ be a minimal idempotent in $U$ with $u_1\leq^U u_0$. Since $u_1$ is minimal, we have
\begin{equation}\label{E:a}
u_1\in I(U).
\end{equation}
We show that
\begin{equation}\label{E:au}
a(u_1) = u_0,\hbox{ for all } a\in M\setminus \{ 1_M\} .
\end{equation}
Indeed, since $u_1\leq^{U} u_0$ and $u_0\in T$, by \eqref{E:para}, we get
\[
a(u_1)\leq^{U} a(u_0) = u_0.
\]
Thus, $a(u_1)\leq^T u_0$ and $a(u_1)\in T$. Since $u_0$ is minimal in $T$, we get $a(u_1)=u_0$.

Equations \eqref{E:a} and \eqref{E:au} show that $u_1$ is as required.
\end{proof}

\begin{proof}[Proof of Theorem~\ref{T:main}]
Let
\[
B = \{ b\in M\colon ab=b \hbox{ for all }a\in M\}.
\]
Note that $M'= \{ 1_M\}\cup B$ is a monoid fulfilling the assumption of
Lemma~\ref{L:sq}. Let $u_1\in U$ be an element as in the conclusion of Lemma~\ref{L:sq}.

Define a function $h\colon M\to U$ by $h(a)=a(u_1)$. Note that $h$ is $M$-equivariant if $M$ is taken with left multiplication action.
Observe the following two implications:
\begin{enumerate}
\item[(a)] if $a_1\in M\setminus B$, $a_2\in M$, and $a_1M=a_2M$, then $a_1=a_2$;

\item[(b)] if $a\in M$ and $b\in B$, then $bM\subseteq aM$.
\end{enumerate}
Point (a) follows from $M$ being almost R-trivial. Point (b)
is a consequence of $b=ab\in aM$. Let $\rho\colon M\to {\mathbb X}(M)$ be the equivariant surjection $\rho(a) = aM$. Note that
by (a) and (b), $\rho$ is injective on $M\setminus B$, all points in $B$ are mapped to a single point of ${\mathbb X}(M)$ that
is the smallest point of this partial order, and no point of $M\setminus B$ is mapped to this smallest point. 
It now follows from the properties of $u_1$ listed in Lemma~\ref{L:sq}
that $h$ factors through $\rho$ giving a function $h'\colon {\mathbb X}(M)\to U$ with $h'\circ \rho = h$. 
Since $\rho$ and $h$ are $M$-equivariant, so is $h'$. 
Let $\pi\colon {\mathbb Y}(M)\to {\mathbb X}(M)$ be the $M$-equivariant function given by Lemma~\ref{L:for}(ii).
Then $f\colon {\mathbb Y}(M)\to U$, defined by $f = h'\circ \pi$, is $M$-equivariant. Furthermore, since $u_1\in I(U)$ gives
$h(1_M)\in I(U)$, and hence $h'(1_MM)\in I(U)$, 
we see that the maximal elements of ${\mathbb Y}(M)$ are mapped by $f$ to $I(U)$.  
Note that if ${\mathbb X}(M)$ has at most two elements, then we can let $g = f$. Then $h'(1_MM)$ 
is an idempotent minimal with respect to $\leq^U$, and $g$ maps all maximal elements of 
${\mathbb Y}(M)$ to $h'(1_MM)$. 
Without any restrictions on the size of ${\mathbb X}(M)$, Lemma~\ref{L:aux} can be applied to $f$ giving a function $g$ as required 
by points (i)--(iii).
\end{proof}

\subsection{Semigroups from partial orders and a restatement of the theorem}\label{Su:sempar}

For a partial order $P$, let 
\[
\langle P\rangle 
\]
be the semigroup, whose binary operation is denoted by $\vee$, generated freely by elements of $P$ modulo the relations 
\begin{equation}\label{E:rel}
p\vee q = q\vee p = q, \hbox{ for }p,q\in P\hbox { with }p\leq_Pq.
\end{equation}
That is, each element of $\langle P\rangle$ can be uniquely written as $p_0\vee \cdots \vee p_n$ for some $n\in {\mathbb N}$ and 
with $p_i$ and $p_{i+1}$ being incomparable with respect to $\leq_P$, for all $0\leq i<n$. Note that if $P$ is linear, then 
$\langle P\rangle = P$.

Observe that if $M$ is a monoid and $P$ is an $M$-partial order, then the action of $M$ on $P$ naturally induces an action of $M$ on 
$\langle P\rangle$ by endomorphisms, namely, for $a\in M$ and $p_0\vee\cdots \vee p_n\in \langle P\rangle$ with $p_0, \dots , p_n\in P$, we let 
\begin{equation}\label{E:orsem}
a(p_0\vee\cdots \vee p_n) = a(p_0)\vee\cdots \vee a(p_n).
\end{equation}  
It is easy to see that the right hand side of the above equality is well defined and that formula \eqref{E:orsem} defines an endomorphism of $\langle P\rangle$ and, in fact,
an action of $M$ on $\langle P\rangle$. 

A moment of thought convinces one that the function from Theorem~\ref{T:main} extends to a homomorphism 
from $\langle {\mathbb Y}(M)\rangle$ to $U$---condition (ii) of Theorem~\ref{T:main} and the fact that the function in that 
theorem takes values in $E(U)$ are responsible for this. Therefore, we get the following corollary, which we state with the conventions 
of Section~\ref{Su:theomain}. 

\begin{corollary}\label{C:rest} 
Assume $M$ is almost R-trivial. There exists an $M$-equivariant homomorphism of semigroups 
$g\colon \langle {\mathbb Y}(M)\rangle \to U$ that maps maximal elements of ${\mathbb Y}(M)$ to $I(U)$. Additionally, if ${\mathbb X}(M)$ has at most two elements, then 
$g$ maps maximal elements of ${\mathbb Y}(M)$ to minimal idempotents in $U$.
\end{corollary}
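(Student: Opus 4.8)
The plan is to obtain $g$ by extending, along the universal property of $\langle {\mathbb Y}(M)\rangle$, the function produced in Theorem~\ref{T:main}. Write $P = {\mathbb Y}(M)$ and let $g_0\colon P\to E(U)$ denote that function, so that $g_0$ is $M$-equivariant, order reversing from $\leq_P$ to $\leq^U$, carries maximal elements of $P$ to $I(U)$, and, when ${\mathbb X}(M)$ has at most two elements, carries them to minimal idempotents of $U$. Since $\langle P\rangle$ is by definition the free semigroup on the set $P$ modulo the congruence generated by the relations \eqref{E:rel}, a function from $P$ to a semigroup extends to a (necessarily unique) homomorphism out of $\langle P\rangle$ exactly when it sends the two sides of each relation in \eqref{E:rel} to the same element; equivalently, the homomorphism $\langle P\rangle\to U$ is then given on normal forms by $p_0\vee\cdots\vee p_n\mapsto g_0(p_0)\cdots g_0(p_n)$, and the only thing to check is that $g_0$ respects \eqref{E:rel}.

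First I would do exactly that. For $p\leq_P q$, the relation $p\vee q = q\vee p = q$ demands $g_0(p)g_0(q) = g_0(q)g_0(p) = g_0(q)$, which is precisely the statement that $g_0(q)\leq^U g_0(p)$. When $p\neq q$ this is exactly condition (ii) of Theorem~\ref{T:main}; when $p=q$ it reduces to $g_0(p)$ being idempotent, which holds because $g_0$ takes values in $E(U)$. Hence $g_0$ respects every relation in \eqref{E:rel} and thus extends to a homomorphism $g\colon \langle P\rangle\to U$.

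Next I would verify that $g$ is $M$-equivariant. Recall that $M$ acts on $\langle P\rangle$ by the rule \eqref{E:orsem} and on $U$ by endomorphisms. For $a\in M$ and $p_0\vee\cdots\vee p_n\in\langle P\rangle$,
\[
g\bigl(a(p_0\vee\cdots\vee p_n)\bigr) = g_0(a(p_0))\cdots g_0(a(p_n)) = a(g_0(p_0))\cdots a(g_0(p_n)) = a\bigl(g(p_0\vee\cdots\vee p_n)\bigr),
\]
where the middle equality uses the $M$-equivariance of $g_0$ and the last one uses that $a$ acts on $U$ as an endomorphism. Finally, each maximal element of ${\mathbb Y}(M)$ is a single generator of $\langle P\rangle$, so $g$ agrees with $g_0$ on it; therefore $g$ maps maximal elements of ${\mathbb Y}(M)$ to $I(U)$ by Theorem~\ref{T:main}(iii), and to minimal idempotents of $U$ when ${\mathbb X}(M)$ has at most two elements by the ``Moreover'' clause of that theorem.

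The argument is essentially formal; the only point requiring a little care — and so the \emph{main obstacle}, mild as it is — is recognizing that the family \eqref{E:rel} silently contains the diagonal relations $p\vee p = p$ (take $p=q$), so that idempotence of the values of $g_0$, and not merely its order-reversing property, is genuinely needed for the extension to exist. Once this is noted, the congruence generated by \eqref{E:rel} sits inside the kernel congruence of the free-semigroup homomorphism determined by $g_0$, and the factorization through $\langle P\rangle$ is automatic.
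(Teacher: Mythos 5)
Your proposal is correct and follows exactly the route the paper intends: the paper's own (one-sentence) proof says the extension exists because of condition (ii) of Theorem~\ref{T:main} together with the values lying in $E(U)$, and you have simply written out that verification, the equivariance check, and the observation that maximal elements are generators. Nothing to add.
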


\section{Infinitary Ramsey theorems}\label{S:str}

The goal of this section is Ramsey theoretic. We introduce structures, we call function arrays, 
that generalize the partial semigroup setting of \cite{BBH}. One important feature of these structures is their closure under naturally defined tensor product. 
For function arrays, we introduce the notion of basic sequence. 
Basic sequences appear in Ramsey statements whose aim it is to control the behavior of a coloring on them. We introduce a new 
general notion of such control, which is akin to finding upper bounds on Ramsey degrees, but whose nature is algebraic. (For a definition and applications of 
Ramsey degrees, see, for example, \cite{KPT}.) 
The main result then is Theorem~\ref{T:abst}, which gives control over a coloring on a basic sequence 
from the existence of an appropriate homomorphism. Thus, proving Ramsey statements is reduced to finding homomorphisms. 
Furthermore, as mentioned above, we introduce a natural notion of tensor product for function arrays that allows us to propagate  
the existence of homomorphisms and, therefore, to propagate Ramsey statements.

\subsection{Function arrays and total function arrays}\label{Su:mose}
Here we recall the notion of partial semigroup and, more importantly, we introduce our main Ramsey theoretic structures: 
function arrays, total function arrays and homomorphisms between them. 

As in  \cite{BBH} and \cite{To}, a {\bf partial semigroup} is a set $S$ with a function (operation) from a subset of $S\times S$ to $S$ such that
for all $r,s,t\in S$ if one of the two products $(rs)t$, $r(st)$ is defined, then so is the other and $(rs)t= r(st)$. 
Note that a semigroup is a partial semigroup whose binary operation is total.

Now, let $\Lambda$ be a non-empty set. Let $S$ be a partial semigroup and let $X$ be a set.
By a {\bf function array over $S$ indexed by $\Lambda$ and based on $X$} we understand an assignment
to each $\lambda\in \Lambda$ of a partial function, which we also call $\lambda$, 
from $X$ to $S$ with the property that 
for all $s_0, \dots, s_k\in S$ there exists $x\in X$ such that, for each $\lambda\in \Lambda$, 
\begin{equation}\label{E:dede} 
s_0\lambda(x), \dots, s_k\lambda(x) \hbox{ are all defined.}
\end{equation}
So the domain of each $\lambda\in \Lambda$ is a subset, possibly proper, of $X$; 
condition \eqref{E:dede} means, in particular, that $x$ is in the domain of each $\lambda\in \Lambda$.  
We call a function array as above {\bf total} if $S$ is a semigroup and the domain each 
$\lambda\in \Lambda$ is equal to $X$. 
We call a function array {\bf point based} if $X$ consist of one point, which we usually denote by $\bullet$; 
so $X=\{ \bullet\}$ in this case. Note that, by condition \eqref{E:dede}, the domain of each $\lambda$ is equal to $\{\bullet \}$. 
A point based function array can be, therefore, identified with a function $\Lambda\to S$ given by 
\[
\Lambda\ni \lambda\to \lambda(\bullet)\in S.
\]
Moreover, if $S$ is a semigroup, then a point based function array is automatically total.

We give now some constructions that will be used in Section~\ref{S:conn}. 
Let $S$ be a partial semigroup. As usual, a function $h\colon S\to S$ is an {\bf endomorphism} if for all $s_1,s_2\in S$ with 
$s_1s_2$ defined, $h(s_1)h(s_2)$ is defined and $h(s_1s_2) = h(s_1)h(s_2)$. 
Let $M$ be a monoid.  An action of $M$ on $S$ is called 
is called an {\bf endomorphism action of $M$ on $S$} if, for each $a\in M$, 
the function $s\to a(s)$ is an endomorphism of $S$ and,   
for all $s_1, \dots, s_n\in S$ and each $a\in M$, there is $t\in S$ such that $s_1a(t), \dots, s_na(t)$ are defined. 
Obviously, we will identify such an action with the function $\alpha\colon M\times S\to S$ given by $\alpha(a,s) = a(s)$.

An endomorphism action $\alpha$ of a monoid $M$ on a semigroup $S$ gives rise to two types of function arrays, both of which are over $S$ and indexed by $M$ but are based on different sets. 
The first of these function arrays is based on $S$ and is defined as follows. Let 
\begin{equation}\label{E:sa}
S(\alpha)
\end{equation}
be the function array over $S$ indexed by $M$ and based on $S$ that is obtained by interpreting each $a\in M$ 
as the function from $S$ to $S$ given by the action, that is, 
\[
S\ni s \to \alpha(a,s)\in S.
\]

The second function array arising from $\alpha$ is point based and needs a specification of an element of $S$. So fix ${\bf s}\in S$. Let 
\begin{equation}\label{E:sas}
S(\alpha)_{\bf s}
\end{equation}
be the point based function array over $S$ indexed by $M$ obtained by interpreting each $a\in M$
as the function on $\{ \bullet\}$ given by 
\[
a(\bullet) = \alpha(a, {\bf s}).
\]

Function arrays used in this paper will be of the above form or will be obtained from such by the tensor product operation defined in Section~\ref{Su:ten}.

\subsection{Basic sequences and tame colorings}\label{Su:bata}

Assume we have function arrays $\mathcal S$ and $\mathcal A$ both indexed by $\Lambda$, but with $\mathcal S$ being over a partial semigroup $S$ and based on $X$, while $\mathcal A$ being 
over a semigroup $A$ and point based. To make notation clearer, we use $\vee$ for the binary operation on $A$. We write 
\[
\Lambda(\bullet) =\{ \lambda(\bullet)\colon \lambda\in \Lambda\}\subseteq A.
\]

A sequence $(x_n)$ of elements of $X$ is called {\bf basic in $\mathcal S$} if for all $n_0<\cdots <n_l$ and 
$\lambda_0, \dots, \lambda_l\in \Lambda$ the product
\begin{equation}\label{E:razu}
\lambda_0(x_{n_0}) \lambda_1(x_{n_1})  \cdots \lambda_l(x_{n_l})
\end{equation}
is defined in $S$.

We say that a coloring of $S$ is {\bf $\mathcal A$-tame on} $(x_n)$, where $(x_n)$ is a basic sequence in $\mathcal S$, 
if the color of the elements of the form
\eqref{E:razu} with the additional condition
\begin{equation}\label{E:addi}
\lambda_k(\bullet )\vee \cdots \vee \lambda_l(\bullet )\in \Lambda(\bullet ),\hbox{ for each }k\leq l,
\end{equation}
depends only on the element 
\[
\lambda_0(\bullet )\vee \cdots\vee \lambda_l(\bullet )\in A. 
\]
So, subject to condition \eqref{E:addi}, the color on of the product \eqref{E:razu} 
is entirely controlled by $\lambda_0(\bullet )\vee \cdots\vee \lambda_l(\bullet )$, which, in general, contains much less information than \eqref{E:razu}.

\subsection{Total function arrays from function arrays}

There is a canonical way of associating a total function array to each function array, which generalizes the operation $\gamma S$ of 
compactification of a directed partial semigroup $S$ from \cite{BBH}. 
The definition of $\gamma S$, the semigroup structure and compact topology on it should be recalled here, for example, from \cite[p.31]{To}; in 
particular, we use the symbol $*$ to denote the semigroup operation on $\gamma S$, that is, the product of ultrafilters. 

Let $\mathcal S$ be a function array over a partial semigroup $S$, indexed by $\Lambda$ and based on $X$. Note that \eqref{E:dede} implies that 
$S$ is directed as defined in \cite[p.30]{To}. It follows 
that $\gamma S$ is defined. Let $\gamma X$ be the set of all ultrafilters $\cu$ on $X$ such that for each $s\in S$ and $\lambda\in \Lambda$
\[
\{ x\in X\colon s\lambda(x)\hbox{ is defined}\}\in \cu.
\]
It is clear from condition \eqref{E:dede} that $\gamma X$ is non-empty. It is also easy to verify that $\gamma X$ is compact 
with the usual {\v C}ech--Stone topology on ultrafilters. 
Each $\lambda$ extends to a function, again called $\lambda$, from $\gamma X$ to $\gamma S$ by the usual formula, for
$\cu\in \gamma X$,
\[
A\in \lambda(\cu)\; \hbox{ iff }\;\lambda^{-1}(A)\in \cu.
\]
It is easy to see that the image of each $\lambda$ is, indeed, included in $\gamma S$ and each function $\lambda\colon \gamma X\to \gamma S$ is continous. 
Since $\gamma S$ is a semigroup,
we get a total function array over $\gamma S$, indexed by $\Lambda$ and based on $\gamma X$. 
We denote this total function array by $\gamma{\mathcal S}$. The topologies on 
$\gamma S$ and $\gamma X$ will play a role later on.

\subsection{The Ramsey theorem}

The following natural notion of homomorphism will be crucial in stating Theorem~\ref{T:abst}.  
Assume we have total function arrays $\mathcal A$ and $\mathcal B$ both indexed by $\Lambda$, with $\mathcal A$ being over $A$ and based on $X$ and
$\mathcal B$ being over $B$ and based on $Y$. A {\bf homomorphism from $\mathcal A$ to $\mathcal B$} is a pair of functions 
$(f,g)$ such that
$f\colon X\to Y$, $g\colon A\to B$, $g$ is a homomorphism of semigroups, and, for each $x\in X$ and $\lambda\in \Lambda$, we have
\[
\lambda(f(x)) = g(\lambda(x)).
\]

The following theorem is the main result of Section~\ref{S:str}. 
The bottom line of it is that a homomorphism from a point based function array $\mathcal A$ gives rise to basic 
sequences on which colorings are $\mathcal A$-tame. 

\begin{theorem}\label{T:abst}
Let $\mathcal A$ and $\mathcal S$ be function arrays both indexed by a finite set $\Lambda$, with 
$\mathcal A$ being point based and over a semigroup and $\mathcal S$ being over a partial semigroup $S$. Let
$(f,g)\colon {\mathcal A}\to \gamma {\mathcal S}$ be a homomorphism. Then for each $D\in f(\bullet )$ and
each finite coloring of $S$, there exists a basic sequences $(x_n)$ of elements of $D$ on which the coloring is $\mathcal A$-tame.
\end{theorem}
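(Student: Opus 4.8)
The plan is to run a recursive construction of the basic sequence $(x_n)$ inside the set $D$, at each step using an idempotent (or suitably chosen) ultrafilter in $\gamma S$ together with the homomorphism $(f,g)$ to decide the color of the relevant products. First I would record the ultrafilter-theoretic setup: since $\mathcal A$ is point based over a semigroup $A$, the homomorphism gives a semigroup homomorphism $g\colon A\to\gamma S$ and an element $f(\bullet)\in\gamma X$ with $\lambda(f(\bullet))=g(\lambda(\bullet))$ for every $\lambda\in\Lambda$; so for each $\lambda$, the ultrafilter $g(\lambda(\bullet))$ on $S$ is the $\lambda$-image of $f(\bullet)$. Because $\Lambda$ is finite and the given finite coloring of $S$ partitions $S$ into finitely many pieces, for each $\lambda\in\Lambda$ exactly one color-class belongs to the ultrafilter $g(\lambda(\bullet))$; call it the color $c(\lambda)$, and more generally for a word $\lambda_0\vee\cdots\vee\lambda_l$ in $\langle\Lambda(\bullet)\rangle$ there is a well-defined color attached via the ultrafilter $g(\lambda_0(\bullet))\cdots g(\lambda_l(\bullet))=g(\lambda_0(\bullet)\vee\cdots\vee\lambda_l(\bullet))$. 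The target is to build $(x_n)$ in $D$ so that every legitimate product $\lambda_0(x_{n_0})\cdots\lambda_l(x_{n_l})$ (subject to \eqref{E:addi}) actually realizes this predicted color; $\mathcal A$-tameness then follows because $g$ is a semigroup homomorphism, so the predicted color depends only on $\lambda_0(\bullet)\vee\cdots\vee\lambda_l(\bullet)\in A$.

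The recursion runs as follows. Suppose $x_0,\dots,x_{m-1}\in D$ have been chosen so that all products $\lambda_0(x_{n_0})\cdots\lambda_l(x_{n_l})$ with $n_0<\cdots<n_l<m$ satisfying \eqref{E:addi} have the predicted color and are defined. There are only finitely many such partial products $s$ (since $\Lambda$ and $m$ are finite); enlarging each by one more factor on the right amounts to asking, for each such $s$ and each admissible $\lambda\in\Lambda$, that $x_m$ lie in the set $\{x\in X: s\lambda(x)\text{ is defined and }s\lambda(x)\text{ has the predicted color}\}$, and also in $\{x: \lambda(x)\text{ itself has the predicted color}\}$ for products of length one. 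Here one uses that $\cu:=f(\bullet)\in\gamma X$ and that $\lambda(\cu)=g(\lambda(\bullet))\in\gamma S$: the set $\{x: s\lambda(x)\text{ defined}\}$ is in $\cu$ by definition of $\gamma X$, and the set $\{x: s\lambda(x)\in(\text{predicted color class})\}$ is the $\lambda$-preimage of a set in $\lambda(\cu)$ — provided $s$ times the predicted color class is again in the ultrafilter $\lambda(\cu)$. This last point is exactly where the ultrafilter algebra is needed: one wants $s\cdot A\in\lambda(\cu)$ whenever $A$ is the color class with $A\in\lambda(\cu)$ — equivalently one should have chosen the ultrafilters so that the relevant products absorb on the left, i.e. $g(\lambda_0(\bullet))\cdots g(\lambda_{l-1}(\bullet))\cdot\lambda(\cu)=g(\lambda_0(\bullet)\vee\cdots\vee\lambda(\bullet))$, which is precisely the statement that $g$ is a semigroup homomorphism combined with $\lambda(\cu)=g(\lambda(\bullet))$. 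Intersecting finitely many sets of $\cu$ yields a set in $\cu$, hence nonempty, and — crucially — still a subset of $D$ after also intersecting with $D\in f(\bullet)=\cu$; pick $x_m$ there.

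The one subtlety that I expect to be the main obstacle is the bookkeeping around condition \eqref{E:addi}: products are only required to have predicted colors when every \emph{suffix} $\lambda_k(\bullet)\vee\cdots\vee\lambda_l(\bullet)$ lies in $\Lambda(\bullet)$, so when I extend a partial product $s=\lambda_0(x_{n_0})\cdots\lambda_{l-1}(x_{n_{l-1}})$ by a new last factor $\lambda_l(x_m)$ I must make sure I only impose the color constraint for those $(\lambda_0,\dots,\lambda_l)$ for which the constraint is actually claimed, and I must verify that the ``predicted color'' machinery is consistent — i.e. that the ultrafilter $g(\lambda_k(\bullet)\vee\cdots\vee\lambda_l(\bullet))$ appearing as a suffix genuinely contains the color class used at the corresponding stage. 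This is a matter of carefully matching the multiplicative structure of $\langle\Lambda(\bullet)\rangle\subseteq A$ against the nested intersections in the recursion, using that $g$ is a homomorphism and that $\lambda(\cu)=g(\lambda(\bullet))$ throughout; the topological hypotheses (compactness of $\gamma X$, $\gamma S$, continuity of the $\lambda$'s) are not really needed for the construction itself, only to guarantee $\gamma X\neq\emptyset$ and that the $\lambda$-images land in $\gamma S$, which the excerpt already records. Once the recursion goes through, $(x_n)$ is basic in $\mathcal S$ because all the products in \eqref{E:razu} were arranged to be defined, its terms lie in $D$, and the coloring is $\mathcal A$-tame on it because the color of each admissible product equals the predicted color, which is a function of $\lambda_0(\bullet)\vee\cdots\vee\lambda_l(\bullet)\in A$ alone.
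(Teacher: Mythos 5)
Your overall strategy is the same as the paper's: fix $\mathcal{U}=f(\bullet)$, use the identity $\lambda_1(\mathcal U)*\lambda_2(\mathcal U)=g(\lambda_1(\bullet)\vee\lambda_2(\bullet))$ coming from $g$ being a homomorphism, and build $(x_n)$ recursively inside sets of $\mathcal U$ so that each admissible product lands in a prescribed member of the ultrafilter $g(\lambda_0(\bullet)\vee\cdots\vee\lambda_l(\bullet))$ on which the coloring is constant. However, there is a genuine gap at the heart of the inductive step. You need, for a \emph{specific} already-formed partial product $s=\lambda_0(x_{n_0})\cdots\lambda_{l-1}(x_{n_{l-1}})$, that the set $\{x: s\lambda(x)\in A\}$ belongs to $\mathcal U$, where $A$ is the predicted color class; equivalently, that $A$ belongs to the ultrafilter $s\cdot\lambda(\mathcal U)$. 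You justify this by appealing to $g(\lambda_0(\bullet))*\cdots*g(\lambda_{l-1}(\bullet))*\lambda(\mathcal U)=g(\lambda_0(\bullet)\vee\cdots\vee\lambda(\bullet))$, but that is an identity between \emph{ultrafilter} products: unwinding the definition of $*$, it only says that $A\in t\cdot\lambda(\mathcal U)$ for a set of left factors $t$ belonging to the product ultrafilter, i.e.\ for ``almost all'' $t$ in an iterated sense --- not for the particular $s$ you have in hand. Your stated induction hypothesis (that previously formed products are defined and have the predicted color) is too weak to guarantee that $s$ is one of the good left factors, so the recursion as described does not close.

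Repairing this is exactly what the paper's extra bookkeeping accomplishes, and it is more than routine matching of suffixes. The paper carries along a decreasing chain of sets $P=P_0\supseteq P_1\supseteq\cdots$ in $\mathcal U$ and strengthens the induction hypothesis to conditions (ii)--(iv): each $x_m$ is chosen so that, for \emph{all} $u$ in the later set $P_n$ and for $\mathcal U$-many $u$, the two-factor products $\lambda_1(x_m)\lambda_2(u)$ land in $\bigl(\lambda_1(\bullet)\vee\lambda_2(\bullet)\bigr)(P_m)$. The point is that only \emph{two-factor} absorption is ever imposed on the chosen points; the statement \eqref{E:G} about long products is then recovered by a separate induction in which the tail $\lambda_1(x_{m_1})\cdots\lambda_l(x_{m_l})$ is rewritten as $\lambda(y)$ for a single $y\in P_{m_1}$ and a single $\lambda$ with $\lambda(\bullet)=\lambda_1(\bullet)\vee\cdots\vee\lambda_l(\bullet)$ (this is where hypothesis \eqref{E:addi} is used), so that condition (ii) applies. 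Without some device of this kind --- a strengthened, propagatable hypothesis recording that earlier choices were made inside the relevant ``good'' sets, plus the reduction of long products to two-factor products --- the step from the homomorphism identity to the behavior of a concrete partial product does not go through.
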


\begin{proof} Consistently with our conventions, $\vee$ denotes the semigroup operation in the semigroup over which $\mathcal A$ is defined.
Let $\mathcal S$ be based on a set $X$.

Set $\cu = f(\bullet )$. Observe that if $\lambda(\bullet ) = \lambda'(\bullet )$, then $\lambda(\cu) = \lambda'(\cu)$ since
\[
\lambda(f(\bullet )) = g(\lambda(\bullet )) = g(\lambda'(\bullet )) = \lambda'(f(\bullet )).
\]
This observation allows us to define for $\sigma\in \Lambda(\bullet )$,
\[
\sigma (\cu) = \lambda(\cu)
\]
for some, or, equivalently, each, $\lambda\in \Lambda$ with $\lambda(\bullet )=\sigma$. Observe further that for $\sigma\in \Lambda(\bullet )$ we have
\begin{equation}\label{E:sok}
g(\sigma) = \sigma(\cu).
\end{equation}
Indeed, fix $\lambda\in \Lambda$ with $\sigma = \lambda(\bullet )$. Then we have
\[
\sigma(\cu) = \lambda(f(\bullet )) = g(\lambda(\bullet )) = g(\sigma).
\]
For $P\subseteq X$ and $\sigma\in \Lambda(\bullet)$, set
\[
\sigma(P)= \bigcap \{ \lambda(P)\colon \lambda(\bullet ) = \sigma\}.
\]
Note that if $P\in \cu$ and $\lambda\in \Lambda$, then $\lambda(P)\in \lambda(\cu)$ since $P\subseteq \lambda^{-1}(\lambda(P))$.
So, for $\lambda$ with $\lambda(\bullet )=\sigma$, we have $\lambda(P)\in\sigma(\cu)$, and,
therefore, by finiteness of $\Lambda$, we get $\sigma(P)\in \sigma(\cu)$.

Consider a finite coloring of $S$. Let $P\in \cu$ be such that the coloring is constant on $\sigma(P)$ for each $\sigma\in \Lambda(\bullet )$, using
the obvious observation that $\sigma(P)\subseteq \sigma(P')$ if $P\subseteq P'$.

Now, we produce $x_n\in X$ and $P_n\subseteq X$ so that
\begin{enumerate}
\item[(i)] $x_n\in D$, $P_n\subseteq P$;

\item[(ii)] $\lambda_1(x_{m_1})  \lambda_2(u) \in \big(\lambda_1(\bullet )\vee \lambda_2(\bullet )\big)(P_{m_1})$,
for all $m_1<n$, all $u\in P_n$, and all $\lambda_1, \lambda_2\in \Lambda$ with $\lambda_1(\bullet )\vee \lambda_2(\bullet )\in \Lambda(\bullet)$;

\item[(iii)]  the set of $u$ fulfilling the following condition belongs to $\mathcal U$: 
$\lambda_1(x_{m}) \lambda_2(u)\in \big(\lambda_1(\bullet )\vee \lambda_2(\bullet )\big)(P_{m})$, for all $m\leq n$ and all
$\lambda_1, \lambda_2\in \Lambda$ with $\lambda_1(\bullet )\vee \lambda_2(\bullet )\in \Lambda(\bullet)$;

\item[(iv)] $\lambda(x_m)\in \lambda(\bullet )(P_m)$, for all $m\leq n$ and all $\lambda\in \Lambda$.
\end{enumerate}
Note that in points (ii) and (iii) above the condition $\lambda_1(\bullet )\vee \lambda_2(\bullet )\in \Lambda(\bullet)$ ensures that
$\big(\lambda_1(\bullet )\vee \lambda_2(\bullet )\big)(P_{m_1})$ and $\big(\lambda_1(\bullet )\vee \lambda_2(\bullet )\big)(P_{m})$ are defined.

Assume we have $x_m, P_m$ for $m<n$ as above. We produce $x_n$ and $P_n$ so that points (i)--(iv) above hold.
Define $P_n$ by letting
\[
P_n = P \cap \bigcap_{m<n} C_{m},
\]
where $C_m$ consists of those $u\in X$ for which, for all $\lambda_1,\lambda_2\in \Lambda$, 
\[
\begin{split}
\hbox{if }\lambda_1(\bullet)\vee \lambda_2(\bullet)\in \Lambda(\bullet), \hbox{ then } \lambda_1(x_{m})\lambda_2(u) \in
\big(\lambda_1(\bullet )\vee \lambda_2(\bullet )\big)(P_{m}).
\end{split}
\]
For $n=0$, by convention, we set $\bigcap_{m<n} C_{m} = S$. Observe that (ii) holds for $n$.
Our inductive assumption (iii) implies that $C_{m}\in \cu$. Thus,
the definition of $P_n$ gives that $P_0=P\in \cu$ and, for $n>0$, $P_n\in \cu$.

Using \eqref{E:sok} in the last equality, we have that, for all $\lambda_1, \lambda_2\in \Lambda$ with $\lambda_1(\bullet)\vee \lambda_2(\bullet)\in \Lambda(\bullet)$,
\begin{equation}\label{E:lal}
\begin{split}
\lambda_1(\cu)*\lambda_2(\cu) &= \lambda_1(f(\bullet ))*\lambda_2(f(\bullet )) = g(\lambda_1(\bullet )) *g(\lambda_2(\bullet ))\\
&= g(\lambda_1(\bullet )\vee \lambda_2(\bullet ))
= \big( \lambda_1(\bullet )\vee \lambda_2(\bullet )\big)(\cu).
\end{split}
\end{equation}
Separately, we note that $P_n\in \cu$ and therefore, for $\lambda_1, \lambda_2\in \Lambda$ with $\lambda_1(\bullet)\vee \lambda_2(\bullet)\in \Lambda(\bullet)$,
\begin{equation}\label{E:laf}
\begin{split}
\big( \lambda_1(\bullet )\vee \lambda_2(\bullet )\big)(P_n) &\in \big( \lambda_1(\bullet )\vee \lambda_2(\bullet )\big)(\cu)\;\hbox{ and }\\
\lambda_1(\bullet )(P_n) &\in \lambda_1(\bullet )(\cu)= \lambda_1(\cu).
\end{split}
\end{equation}
It follows from \eqref{E:lal} and \eqref{E:laf} that
we can pick $x_n$ for which (iii) and (iv) hold. Since $D\in \cu$, we can also arrange that $x_n\in D$.
So (i) is also taken care of.

Now, it suffices to show that the sequence $(x_n)$ constructed above is as needed. The entries of $(x_n)$ come from
$D$ by (i). By induction on $l$, we show that for all
$m_0<m_1< \cdots  <m_l$ and all $\lambda_0,  \lambda_1, \dots , \lambda_l\in \Lambda$,
we have
\begin{equation}\label{E:G}
\lambda_0(x_{m_0})  \lambda_1(x_{m_1})  \lambda_2(x_{m_2}) \cdots \lambda_l(x_{m_l})
\in  \big(\lambda_0(\bullet ) \vee \lambda_1(\bullet )\cdots \vee \lambda_l(\bullet )\big) (P_{m_0}),
\end{equation}
provided that $\lambda_k(\bullet )\vee \cdots \vee \lambda_l(\bullet )\in \Lambda(\bullet )$ for all $k\leq l$. This claim
will establish the theorem since $P_{m_0}\subseteq P$ by (i).

The case $l=0$ of \eqref{E:G} is (iv). We check the inductive step for \eqref{E:G} using point (ii). Let $l>0$.
Fix $m_0<m_1< \cdots <m_l$ and $\lambda_0,  \lambda_1,  \dots , \lambda_l\in \Lambda$.
By our inductive assumption, we have
\begin{equation}\label{E:new}
\lambda_1(x_{m_1}) \lambda_2(x_{m_2})  \cdots \lambda_l(x_{m_l}) \in \big(\lambda_1(\bullet )\vee \cdots \vee \lambda_l(\bullet )\big)(P_{m_1}).
\end{equation}
Let $\lambda\in \Lambda$ be such that
\begin{equation}\label{E:tot}
\lambda(\bullet ) =\lambda_1(\bullet )\vee \cdots\vee \lambda_l(\bullet ).
\end{equation}
Since
\begin{equation}\notag
\big(\lambda_1(\bullet )\vee \cdots \vee \lambda_l(\bullet )\big)(P_{m_1})\subseteq \lambda(P_{m_1}),
\end{equation}
by \eqref{E:new}, there exists $y\in P_{m_1}$ such that
\begin{equation}\label{E:yyy}
\lambda(y) = \lambda_1(x_{m_1}) \lambda_2(x_{m_2})  \cdots \lambda_l(x_{m_l}).
\end{equation}
Since $m_0<m_1$ and since $y\in P_{m_1}$, from (ii) with $n=m_1$, we get
\begin{equation}\label{E:hewn}
\lambda_0(x_{m_0}) \lambda(y) \in \big(\lambda_0(\bullet )\vee \lambda(\bullet )\big)(P_{m_0}).
\end{equation}
Note that (ii) can be applied here as $\lambda_0(\bullet)\vee \lambda(\bullet)\in \Lambda(\bullet)$ as
\[
\lambda_0(\bullet)\vee \lambda(\bullet) = \lambda_0(\bullet)\vee \lambda_1(\bullet )\vee \cdots\vee \lambda_l(\bullet ).
\]
Now \eqref{E:G} follows from \eqref{E:hewn} together with \eqref{E:tot} and \eqref{E:yyy}.
\end{proof}

In the proof above, at stage $n$, $x_n$ is chosen arbitrarily from sets belonging to $f(\bullet )$. It follows that
if $f(\bullet )$ is assumed to be non-principal, then the sequence $(x_n)$ can be chosen to be injective.

\subsection{Tensor product of function arrays}\label{Su:ten}

We introduce and apply a natural notion of tensor product for function arrays. 

Let $\Lambda_0$, $\Lambda_1$ be finite sets. Let
\[
\Lambda_0\star \Lambda_1= \Lambda_0\cup \Lambda_1\cup (\Lambda_0\times \Lambda_1),
\]
where the union is taken to be disjoint. Fix a partial semigroup $S$.
Let ${\mathcal S}_i$, $i=0, 1$, be function arrays over $S$ indexed by $\Lambda_i$ and based on $X_i$, respectively.
Define
\[
{\mathcal S}_0\otimes {\mathcal S}_1
\]
to be the function array over $S$ indexed by $\Lambda_0\star \Lambda_1$ and based on $X_0\times X_1$ defined as follows. With $\lambda_0\in \Lambda_0$,
$\lambda_1\in \Lambda_1$, and $(\lambda_0,\lambda_1)\in \Lambda_0\times \Lambda_1$, we associate partial functions from $X_0\times X_1$ to $S$ by letting
\[
\lambda_0(x_0,x_1) = \lambda_0(x_0),\;\; \lambda_1(x_0,x_1) = \lambda_{1}(x_{1}),\;\; (\lambda_0,\lambda_1)(x_0,x_1) = \lambda_0(x_0) \lambda_{1}(x_{1}),
\]
where the product on the right hand side of the last equality is computed in $S$ and the left hand side is declared to be defined whenever this product exists.
To check that the object ${\mathcal S}_0\otimes {\mathcal S}_1$ defined above is indeed a function array one needs to see condition \eqref{E:dede}. To see this fix 
$s_0, \dots , s_m\in S$. Pick $x_0\in X_0$ with $s_i\lambda(x_0)$ defined for all $i\leq m$ and $\lambda\in \Lambda_0$. Now pick $x_1\in X_1$ with $s_i\lambda'(x_1)$ 
and $(s_i\lambda(x_0)) \lambda'(x_1)$ are defined for all $i\leq m$, $\lambda\in \Lambda_0$ and $\lambda'\in \Lambda_1$. 
Then clearly $s_i\vec{\lambda}(x_0,x_1)$ are defined for all $i\leq m$ and $\vec{\lambda} \in \Lambda_0\star\Lambda_1$, as required.

It is clear that the operation of tensor product is associative; if each ${\mathcal S}_i$, $i<n$, is a function array over $S$ indexed by $\Lambda_i$, then
$\bigotimes_{i<n}{\mathcal S}_i$ is a function array over $S$ indexed by $\Lambda_0\star\cdots \star \Lambda_{n-1}$, 
which consists of all sequences $(\lambda_0, \dots , \lambda_m)$, where 
$\lambda_i\in \Lambda_{j_i}$ for some $j_0<\cdots <j_m<n$.
Note that if each ${\mathcal S}_i$ is point based, then so is the tensor product.

\begin{proposition}\label{P:pro1}
Let $S$ be a partial semigroup.
Let ${\mathcal S}_i$, $i=0, 1$, be function arrays over $S$ based on $\Lambda_i$, respectively.
Then there is a homomorphism 
\[
\gamma {\mathcal S}_0\otimes\gamma {\mathcal S}_1 \to \gamma({\mathcal S}_0 \otimes {\mathcal S}_1). 
\]
\end{proposition}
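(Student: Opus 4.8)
The plan is to build the homomorphism $(f,g)\colon \gamma{\mathcal S}_0\otimes\gamma{\mathcal S}_1 \to \gamma({\mathcal S}_0\otimes{\mathcal S}_1)$ explicitly, using the universal property of the {\v C}ech--Stone-type construction $\gamma$ on both the ``base'' side and the ``semigroup'' side. On the base side, the total function array $\gamma{\mathcal S}_0\otimes\gamma{\mathcal S}_1$ is based on $\gamma X_0\times\gamma X_1$, while $\gamma({\mathcal S}_0\otimes{\mathcal S}_1)$ is based on $\gamma(X_0\times X_1)$; so $f$ should be the standard map sending a pair of ultrafilters $(\cu_0,\cu_1)$ to the ultrafilter on $X_0\times X_1$ generated by the ``rectangles with $\cu_1$-many columns over a $\cu_0$-large set of rows'' — i.e. the tensor (Fubini) product of ultrafilters $\cu_0\otimes\cu_1$. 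One must check this lands in $\gamma(X_0\times X_1)$, i.e. that for each $s\in S$ and each $\vec\lambda\in\Lambda_0\star\Lambda_1$ the set $\{(x_0,x_1): s\,\vec\lambda(x_0,x_1)\text{ defined}\}$ is in $\cu_0\otimes\cu_1$; this follows from the definitions of $\gamma X_i$ together with the directedness argument used in the excerpt to verify \eqref{E:dede} for ${\mathcal S}_0\otimes{\mathcal S}_1$. On the semigroup side, both arrays are over $\gamma S$, so $g\colon \gamma S\to\gamma S$ should simply be the identity.

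Next I would verify the two requirements of a homomorphism. First, $g=\mathrm{id}$ is trivially a semigroup homomorphism. Second, I must check the compatibility $\vec\lambda(f(\cu_0,\cu_1)) = g(\vec\lambda(\cu_0,\cu_1))$ for each index $\vec\lambda\in\Lambda_0\star\Lambda_1$, where on the left $\vec\lambda$ is interpreted in $\gamma({\mathcal S}_0\otimes{\mathcal S}_1)$ and on the right in $\gamma{\mathcal S}_0\otimes\gamma{\mathcal S}_1$. There are three cases. For $\lambda_0\in\Lambda_0$: in $\gamma{\mathcal S}_0\otimes\gamma{\mathcal S}_1$ it acts as $\lambda_0$ on the first coordinate of $\gamma X_0\times\gamma X_1$, i.e. $(\cu_0,\cu_1)\mapsto\lambda_0(\cu_0)\in\gamma S$; in $\gamma({\mathcal S}_0\otimes{\mathcal S}_1)$ it is the ultrafilter extension of the partial function $(x_0,x_1)\mapsto\lambda_0(x_0)$ evaluated at $\cu_0\otimes\cu_1$. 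That these agree is the standard fact that pushing $\cu_0\otimes\cu_1$ forward under a map depending only on the first coordinate gives the same thing as pushing $\cu_0$ forward under $\lambda_0$. The case $\lambda_1\in\Lambda_1$ is symmetric. For $(\lambda_0,\lambda_1)\in\Lambda_0\times\Lambda_1$: the right-hand side is $\lambda_0(\cu_0)*\lambda_1(\cu_1)$, the product in the semigroup $\gamma S$, while the left-hand side is the ultrafilter extension of $(x_0,x_1)\mapsto\lambda_0(x_0)\lambda_1(x_1)$ at $\cu_0\otimes\cu_1$. I would unwind the definition of the semigroup operation $*$ on $\gamma S$ (iterated ``$\forall^{\cu_0}x_0\,\forall^{\cu_1}x_1$'' quantification, recalled from \cite[p.31]{To}) and match it term by term against the definition of the pushforward along the product map — this is exactly the classical identity $\lambda_0(\cu_0)*\lambda_1(\cu_1)=(\lambda_0\times\lambda_1)_*(\cu_0\otimes\cu_1)$ adapted to the partial-semigroup setting, where one must additionally note that all the products involved are defined on $\cu_0\otimes\cu_1$-large sets precisely because $\cu_i\in\gamma X_i$.

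I expect the only real subtlety — and hence the main obstacle — to be bookkeeping around \emph{partiality}: since $S$ is merely a partial semigroup, I must consistently check, at each stage, that the relevant products are defined on sets large in the appropriate ultrafilter, rather than everywhere. The directedness of $S$ (guaranteed by \eqref{E:dede}, as noted in the excerpt) and the defining property of the ultrafilters in $\gamma X_0$, $\gamma X_1$, $\gamma(X_0\times X_1)$ are exactly what make all these ``definedness'' sets large, so the argument goes through, but it requires care to state cleanly. Once the definedness issues are handled, the verification of the homomorphism equations is a routine unravelling of the ultrafilter-product and pushforward definitions, and no new idea beyond the classical behaviour of tensor products of ultrafilters is needed.
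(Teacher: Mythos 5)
Your proposal is correct and follows essentially the same route as the paper: the paper also takes $g=\mathrm{id}_{\gamma S}$ and $f(\cu,\cv)=\cu\times\cv$, the Fubini (tensor) product of ultrafilters, and leaves the compatibility checks (which you spell out in more detail, including the definedness bookkeeping) as routine.
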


\begin{proof} Let ${\mathcal S}_0$ be based on $X_0$ and ${\mathcal S}_1$ on $X_1$. 
Then $\gamma {\mathcal S}_0\otimes\gamma {\mathcal S}_1$ is based on $\gamma X_0\times \gamma X_1$, while 
$\gamma({\mathcal S}_0 \otimes {\mathcal S}_1)$ on $\gamma(X_0\times X_1)$. 
Consider the natural map $\gamma X_0\times \gamma X_1\to \gamma(X_0\times X_1)$ given by 
\[
(\cu, \cv)\to \cu\times \cv, 
\] 
where, for $C\subseteq X_0\times X_1$,  
\[
C\in \cu\times \cv \Longleftrightarrow \{ x_0\in X_0\colon \{ x_1\in X_1\colon (x_0,x_1)\in C\}\in \cv \}\in \cu.
\]
Then 
\[
(f, {\rm id}_{\gamma S})\colon \gamma {\mathcal S}_0\otimes\gamma {\mathcal S}_1 \to \gamma({\mathcal S}_0 \otimes {\mathcal S}_1), 
\]
where $f(\cu, \cv)= \cu\times \cv$, is the desired homomorphism. 
\end{proof}

\begin{proposition}\label{P:pro2}
Fix semigroups $A$ and $B$.
For $i=0,1$, let ${\mathcal A}_i$ and ${\mathcal B}_i$ be total function arrays over $A$ and $B$, respectively, indexed by $\Lambda_i$.
Let $(f_i, g)\colon {\mathcal A}_i\to {\mathcal B}_i$ be homomorphisms. Then
\[
(f_0\times f_1, g)\colon {\mathcal A}_0 \otimes {\mathcal A}_1 \to {\mathcal B}_0 \otimes {\mathcal B}_1
\]
is a homomorphism.
\end{proposition}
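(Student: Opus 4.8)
The plan is to verify the two defining properties of a homomorphism of total function arrays directly from the definitions, the point being that $g$ is the \emph{same} semigroup homomorphism on both coordinates. First I would record that ${\mathcal A}_0\otimes{\mathcal A}_1$ and ${\mathcal B}_0\otimes{\mathcal B}_1$ are indeed total function arrays: $A$ and $B$ are semigroups, so every product occurring in the definition of the tensor product exists, and hence the domain of each index function is all of the base set $X_0\times X_1$, respectively $Y_0\times Y_1$. Thus the notion of homomorphism between them is meaningful, and the candidate is the pair consisting of $f_0\times f_1\colon X_0\times X_1\to Y_0\times Y_1$ and the semigroup homomorphism $g\colon A\to B$.

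It then remains to check that $\vec\lambda(f_0(x_0),f_1(x_1)) = g(\vec\lambda(x_0,x_1))$ for every $(x_0,x_1)\in X_0\times X_1$ and every $\vec\lambda\in\Lambda_0\star\Lambda_1$. I would split into the three kinds of index. For $\vec\lambda=\lambda_0\in\Lambda_0$ both sides reduce to $\lambda_0(f_0(x_0))$ and $g(\lambda_0(x_0))$, which agree because $(f_0,g)$ is a homomorphism from ${\mathcal A}_0$ to ${\mathcal B}_0$; the case $\vec\lambda=\lambda_1\in\Lambda_1$ is identical, using $(f_1,g)$. For the cross term $\vec\lambda=(\lambda_0,\lambda_1)\in\Lambda_0\times\Lambda_1$, the left side unfolds by definition of the tensor product to $\lambda_0(f_0(x_0))\,\lambda_1(f_1(x_1))$, which equals $g(\lambda_0(x_0))\,g(\lambda_1(x_1))$ by the homomorphism properties of $(f_0,g)$ and $(f_1,g)$, and this equals $g\big(\lambda_0(x_0)\,\lambda_1(x_1)\big)$ because $g$ is a semigroup homomorphism; the last expression is $g\big((\lambda_0,\lambda_1)(x_0,x_1)\big)$, again by definition of the tensor product.

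There is essentially no genuine obstacle; the only thing worth flagging is that the argument really does use that a single map $g$ serves as the second component of both homomorphisms ${\mathcal A}_i\to{\mathcal B}_i$, so that multiplicativity of $g$ can be invoked for the product functions $(\lambda_0,\lambda_1)$ — with two unrelated second components the cross terms would not match. Once the displayed identity has been checked in all three cases, the pair $(f_0\times f_1,g)$ satisfies the definition of a homomorphism of total function arrays, which is exactly the claim.
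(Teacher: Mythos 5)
Your proof is correct and follows essentially the same route as the paper: the paper's proof is the single displayed computation $g(\vec\lambda(\vec x)) = g(\lambda_0(x_0))\cdots g(\lambda_{n-1}(x_{n-1})) = \lambda_0(f_0(x_0))\cdots\lambda_{n-1}(f_{n-1}(x_{n-1}))$, which is exactly your cross-term case (your two singleton cases being its degenerate instances), using multiplicativity of $g$ and the homomorphism property of each $(f_i,g)$. Your remark that a single common $g$ is what makes the cross term work is accurate and worth having made.
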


\begin{proof} Let ${\mathcal A}_i$ be based on a set $X_i$ for $i=0,1$. For $(x_0,x_1)\in X_0\times X_1$ and 
$\vec{\lambda}\in \Lambda_0*\Lambda_1 = \Lambda_0\cup\Lambda_1\cup(\Lambda_0\times \Lambda_1)$, we need to check 
\[
g(\vec{\lambda}(x_0,x_1)) = \vec{\lambda}\big((f_0\times f_1)(x_0,x_1)\big).
\]
We do it only for $\vec{\lambda}\in\Lambda_0\times \Lambda_1$, the case $\vec{\lambda}\in\Lambda_0\cup \Lambda_1$ being essentially identical. 
So for 
$\vec{\lambda}\in \Lambda_0\times \Lambda_1$, we have
\[
\begin{split}
g(\vec{\lambda}(x_0,x_1)) &= g(\lambda_0(x_0)\lambda_{1}(x_{1})) = g(\lambda_0(x_0))g(\lambda_{1}(x_{1}))\\
&=\lambda_0(f_0(x_0)) \lambda_{1}(f_{1}(x_{1})) = \vec{\lambda}\big((f_0\times f_1)(x_0,x_1)\big),
\end{split}
\]
where the second equality holds since $g$ is a homomorphism of semigroups and the third equality holds since each $(f_i,g)$ is a homomorphism
from ${\mathcal A}_i$ to ${\mathcal B}_i$. 
\end{proof}

\subsection{Propagation of homomorphisms}

The first application of tensor product has to do with relaxing condition \eqref{E:addi}. This is done in condition \eqref{E:addir}. 

Let $\mathcal A$ be a point based function array 
over a semigroup $A$ and indexed by $\Lambda$. As before, we denote by $\vee$ the binary operation on $A$.
Let $F$ be a subset of $A$, 
let $\mathcal S$ be a function array over a partial semigroup $S$ indexed by $\Lambda$, and let $(x_n)$ be a basic sequence in 
$\mathcal S$. A coloring of $S$ is said to be {\bf $F$-$\mathcal A$-tame on $(x_n)$} if the color of elements of the form
\eqref{E:razu} with the additional condition
\begin{equation}\label{E:addir}
\lambda_k(\bullet )\vee \cdots\vee \lambda_l(\bullet )\in F,\hbox{ for each }k\leq l,
\end{equation}
depends only on the element $\lambda_0(\bullet )\vee \cdots\vee \lambda_l(\bullet )$ of $A$. The notion of $F$-${\mathcal A}$-tameness becomes 
${\mathcal A}$-tameness if $F= \Lambda(\bullet)$. In applications, it will be desirable to take $F$ strictly larger than $\Lambda(\bullet)$ thus making 
$F$-${\mathcal A}$-tameness strictly stronger. 

The following corollary is a strengthening of Theorem~\ref{T:abst}, but it follows from that theorem via 
the tensor product construction.

\begin{corollary}\label{C:abstsup}
Let $\mathcal A$ and $\mathcal S$ be function arrays both indexed by a finite set $\Lambda$, with 
$\mathcal A$ being point based and over a semigroup $A$ and $\mathcal S$ being over a partial semigroup $S$. Let
$(f,g)\colon {\mathcal A}\to \gamma {\mathcal S}$ be a homomorphism. Then for each $D\in f(\bullet )$, 
each finite coloring of $S$, and each finite set $F\subseteq A$, 
there exists a basic sequences $(x_n)$ of elements of $D$ on which the coloring is $F$-$\mathcal A$-tame.
\end{corollary}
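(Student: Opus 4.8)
The plan is to deduce Corollary~\ref{C:abstsup} from Theorem~\ref{T:abst} applied to a suitable tensor power of ${\mathcal A}$, transporting the given homomorphism along using Propositions~\ref{P:pro1} and~\ref{P:pro2}. Fix a finite subset $F$ of $A$ (this is the content of the statement). Every join $\lambda_k(\bullet)\vee\cdots\vee\lambda_l(\bullet)$ occurring in condition~\eqref{E:addir} is a finite join of elements of $\Lambda(\bullet)$; let $G\subseteq A$ be the set of all such joins. Since $F\cap G$ is finite and each of its elements is a finite join of members of $\Lambda(\bullet)$, I may fix $k\in{\mathbb N}$ so large that every element of $F\cap G$ is a join of at most $k$ elements of $\Lambda(\bullet)$.

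Next I build the homomorphism. Put ${\mathcal A}^{\otimes k}={\mathcal A}\otimes\cdots\otimes{\mathcal A}$ ($k$ factors), indexed by $\Lambda^{\star k}=\Lambda\star\cdots\star\Lambda$; it is a point based function array over $A$ (hence total, as $A$ is a semigroup), and a direct inspection of the definitions shows that $\Lambda^{\star k}(\bullet)$ is exactly the set of joins of at most $k$ elements of $\Lambda(\bullet)$, so $F\cap G\subseteq\Lambda^{\star k}(\bullet)$. Iterating Proposition~\ref{P:pro2} on $(f,g)\colon{\mathcal A}\to\gamma{\mathcal S}$ yields a homomorphism ${\mathcal A}^{\otimes k}\to(\gamma{\mathcal S})^{\otimes k}$ with semigroup part $g$ and base part the $k$-fold product of $f$ with itself; iterating Proposition~\ref{P:pro1} yields a homomorphism $(\gamma{\mathcal S})^{\otimes k}\to\gamma({\mathcal S}^{\otimes k})$ whose base part sends a tuple of ultrafilters to their product ultrafilter. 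Composing these gives a homomorphism $(\tilde f,g)\colon{\mathcal A}^{\otimes k}\to\gamma({\mathcal S}^{\otimes k})$ with $\tilde f(\bullet)=\cu\times\cdots\times\cu$, where $\cu=f(\bullet)$; and since $D\in\cu$, an obvious induction on $k$ gives $D^k\in\tilde f(\bullet)$.

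Now apply Theorem~\ref{T:abst} to $(\tilde f,g)$, with the set $D^k$ and the given finite coloring of $S$ (note that ${\mathcal S}^{\otimes k}$ is again a function array over the same partial semigroup $S$). This produces a sequence $(z_n)$, basic in ${\mathcal S}^{\otimes k}$, with every $z_n\in D^k$, on which the coloring is ${\mathcal A}^{\otimes k}$-tame. Write $z_n=(x_n^{(0)},\dots,x_n^{(k-1)})$ and define $(y_p)_{p\in{\mathbb N}}$ by $y_{nk+i}=x_n^{(i)}$ for $0\le i<k$; then every $y_p$ lies in $D$. It remains to verify that $(y_p)$ is basic in ${\mathcal S}$ and that the coloring is $F$-${\mathcal A}$-tame on it; this bookkeeping is the part of the argument requiring the most care, even though each individual step is elementary.

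Concretely: given $p_0<\cdots<p_L$ and $\mu_0,\dots,\mu_L\in\Lambda$, group the indices $t$ according to the value of $\lfloor p_t/k\rfloor$. Within each group the remainders $p_t\bmod k$ are strictly increasing and $<k$, and the distinct quotients form some $n_0<\cdots<n_l$. For the group with quotient $n_j$, placing the corresponding $\mu_t$ into the slots of $\Lambda^{\star k}$ given by those remainders yields a tuple $\vec\mu_j\in\Lambda^{\star k}$ such that $\vec\mu_j(z_{n_j})$ is the product of the factors $\mu_t(y_{p_t})$ over $t$ in that group, and $\vec\mu_j(\bullet)$ is the join of those $\mu_t(\bullet)$. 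Hence $\mu_0(y_{p_0})\cdots\mu_L(y_{p_L})=\vec\mu_0(z_{n_0})\cdots\vec\mu_l(z_{n_l})$, which is defined because $(z_n)$ is basic in ${\mathcal S}^{\otimes k}$; this shows $(y_p)$ is basic in ${\mathcal S}$. Moreover, for each $j$ the suffix $\vec\mu_j(\bullet)\vee\cdots\vee\vec\mu_l(\bullet)$ equals the suffix $\mu_{t_j}(\bullet)\vee\cdots\vee\mu_L(\bullet)$ of the original tuple, where $t_j$ is the first index of group $j$. Thus, under the hypothesis of $F$-${\mathcal A}$-tameness that all suffixes $\mu_t(\bullet)\vee\cdots\vee\mu_L(\bullet)$ lie in $F$, they lie in $F\cap G\subseteq\Lambda^{\star k}(\bullet)$, so ${\mathcal A}^{\otimes k}$-tameness of the coloring on $(z_n)$ applies and shows that the color of $\mu_0(y_{p_0})\cdots\mu_L(y_{p_L})$ depends only on $\vec\mu_0(\bullet)\vee\cdots\vee\vec\mu_l(\bullet)=\mu_0(\bullet)\vee\cdots\vee\mu_L(\bullet)$. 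This is precisely $F$-${\mathcal A}$-tameness on $(y_p)$, which completes the proof.
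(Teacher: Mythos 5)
Your proof is correct and follows essentially the same route as the paper's: reduce the finite set $F$ to joins of boundedly many elements of $\Lambda(\bullet)$, pass to the $k$-fold tensor powers, transport the homomorphism via Propositions~\ref{P:pro2} and~\ref{P:pro1}, and invoke Theorem~\ref{T:abst}. The only (harmless) difference is in the final extraction: the paper uses the set $D\times X^{k-1}$ and leaves the passage back to a basic sequence in $\mathcal S$ implicit, whereas you use $D^{k}$ and spell out the interleaving and regrouping bookkeeping explicitly.
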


\begin{proof} For a natural number $r>0$, by $\Lambda_{<r}$ 
we denote the set of all sequences $\vec{\lambda} = (\lambda_0, \dots, \lambda_m)$ of elements of $\Lambda$ with $m<r$. 
We associate with each such $\vec{\lambda}$ an element $\vec{\lambda}(\bullet )$ of $A$ by letting
\[
\vec{\lambda}(\bullet ) = \lambda_0(\bullet )\vee \cdots\vee \lambda_{m}(\bullet ). 
\]

Since $F$ is finite, there exists $r$ such that
\[
F\cap \{ \vec{\lambda}(\bullet)\colon \vec{\lambda}\in \bigcup_{r'\geq 1} \Lambda_{<r'}\} \subseteq \{ \vec{\lambda}(\bullet)\colon \vec{\lambda}\in \Lambda_{<r}\}.  
\]
Thus, fixing this $r$, it suffices to show the corollary for $F= \{ \vec{\lambda}(\bullet)\colon \vec{\lambda}\in \Lambda_{<r}\}$.  
We consider the function arrays ${\mathcal A}^{\otimes r}$ and $(\gamma {\mathcal S})^{\otimes r}$ indexed by $\Lambda^{*r}$. 
Note that ${\mathcal A}^{\otimes r}$ is based on a set consisting of only the $r$-tuple $(\bullet, \dots, \bullet)$ and that 
\[
\Lambda^{*r}(\bullet, \dots, \bullet) = \{ \vec{\lambda}(\bullet)\colon \vec{\lambda}\in \Lambda_{<r}\}.
\]
Further, by Proposition~\ref{P:pro2}, 
there exists a homomorphism from ${\mathcal A}^{\otimes r}$ to $(\gamma {\mathcal S})^{\otimes r}$, which
is equal to $(f^r, g)$, where $f^r$ stands for the $r$-fold product $f\times \cdots \times f$. 
Note also that $D\times X^{r-1}\in f^r(\bullet )$. Since, by Proposition~\ref{P:pro1}, there is a homomorphism from 
$(\gamma {\mathcal S})^{\otimes r}$ to $\gamma ({\mathcal S}^{\otimes r})$, we are done by Theorem~\ref{T:abst} by composing the two homomorphisms.
\end{proof}

We have one more corollary of Theorem~\ref{T:abst} and the tensor product construction. It concerns double sequences. 
Let ${\mathcal S}$ be a function array over $S$ indexed by $\Lambda$ and based on $X$.
A double sequence $(x_n,y_n)$ of elements of $X$ will be called {\bf basic} if the single sequence
\[
x_0, y_0, x_1, y_1, x_2, y_2, \dots
\]
is basic. Having a basic sequence $(x_n, y_n)$, we will be interested in controlling the color on elements of the form
\begin{equation}\label{E:dwa}
\begin{split}
\lambda_0(x_{m_0})\lambda'_0(y_{n_0})  \lambda_1(x_{m_1})& \lambda'_1(y_{n_1}) \lambda_2(x_{m_2}) \lambda'_2(y_{n_2}) \cdots
\lambda_l(x_{m_l})\lambda'_l(y_{n_l})\\
&\hbox{and}\\
\lambda_0(x_{m_0})\lambda'_0(y_{n_0}) \lambda_1(x_{m_1})& \lambda'_1(y_{n_1}) \lambda_2(x_{m_2}) \lambda'_2(y_{n_2}) \cdots
\lambda_l(x_{m_l})
\end{split}
\end{equation}
for $m_0\leq n_0<m_1\leq n_1< \cdots <m_l\leq n_l$ and
$\lambda_0, \lambda'_0, \dots , \lambda_l, \lambda'_l\in \Lambda$.

Let $\mathcal A$ be a point based total function array over a semigroup $A$ indexed by $\Lambda$.
For $\lambda, \lambda'\in \Lambda$, we say that $\lambda$ and $\lambda'$ are {\bf conjugate} if $\lambda(\bullet ) = \lambda'(\bullet )$.
For a set $F\subseteq A$, we say that a coloring of $S$ is {\bf conjugate $F$-${\mathcal A}$-tame on} $(x_n, y_n)$
if the color of the elements of the form \eqref{E:dwa} such that for each $k\leq l$
\[
\lambda_k\hbox{ and }\lambda_k'\hbox{ are conjugate and }\;\lambda_k(\bullet )\vee \cdots \vee \lambda_l(\bullet )\in F
\]
depends only on
\[
\lambda_0(\bullet )\vee \cdots \vee \lambda_l(\bullet )\in A.
\]

\begin{corollary}\label{C:abst}
Let $\mathcal A$ and $\mathcal S$ be function arrays both indexed by a finite set $\Lambda$, with 
$\mathcal A$ being point based and over a semigroup $A$ and $\mathcal S$ being based on $X$ and over a partial semigroup $S$. 
Let $(f,g)\colon {\mathcal A}\to \gamma {\mathcal S}$ be a homomorphism. 
Let $v\in \gamma X$ be such that for each $\lambda\in \Lambda$
\begin{equation}\label{E:eqa}
\lambda(f(\bullet )) \lambda(v) = \lambda(f(\bullet )).
\end{equation}
Then, for each finite subset $F$ of $A$, $D\in f(\bullet )$ and $E\in v$ and each finite coloring of $S$, there exists a basic sequence $(x_n,y_n)$, with $x_n\in D$ and 
$y_n\in E$, on which the coloring is conjugate $F$-$\mathcal A$-tame.
\end{corollary}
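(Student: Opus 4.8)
The plan is to prove the corollary by deducing it from Corollary~\ref{C:abstsup} through the tensor product construction, in exactly the way Corollary~\ref{C:abstsup} was deduced from Theorem~\ref{T:abst}. Put $\mathcal{S}' = \mathcal{S}\otimes\mathcal{S}$; this is a function array over $S$, indexed by $\Lambda\star\Lambda$ and based on $X\times X$. First I would record that a sequence $((x_n,y_n))$ basic in $\mathcal{S}'$ gives, after interleaving, a basic double sequence in the sense of the corollary: given $j_0<\cdots<j_r$ and $\nu_0,\dots,\nu_r\in\Lambda$, group the selected entries by the pair they come from and, inside one pair, interpret ``$x_a$ alone'', ``$y_a$ alone'', and ``$x_a$ followed by $y_a$'' by the first-copy index $\nu$, the second-copy index $\nu'$, and the product index $(\nu,\nu')$ of $\mathcal{S}\otimes\mathcal{S}$ respectively; the resulting product along $((x_n,y_n))$ is then defined, and by associativity it equals the interleaved product in $S$. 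The same bookkeeping shows that every element of the form \eqref{E:dwa} with $m_0\le n_0 < m_1\le n_1 < \cdots$ is a product along $((x_n,y_n))$: in block $k$ use the product index $(\lambda_k,\lambda'_k)$ at pair $m_k$ when $m_k=n_k$, and the first-copy index $\lambda_k$ at pair $m_k$ followed by the second-copy index $\lambda'_k$ at pair $n_k$ when $m_k<n_k$.

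Next I construct the homomorphism. Let $\mathcal{A}'$ be the point based total function array over the semigroup $\gamma S$, indexed by $\Lambda\star\Lambda$, which sends $\lambda$ in the first copy of $\Lambda$ to $\lambda(f(\bullet))$, sends $\lambda'$ in the second copy to $\lambda'(v)$, and sends $(\lambda,\lambda')\in\Lambda\times\Lambda$ to $\lambda(f(\bullet))*\lambda'(v)$, where $*$ is multiplication in $\gamma S$. Set $\tilde f(\bullet)=f(\bullet)\times v$, which lies in the base of $\gamma\mathcal{S}'$ since $f(\bullet)\in\gamma X$ and $v\in\gamma X$. I would then verify that $(\tilde f,\mathrm{id}_{\gamma S})\colon\mathcal{A}'\to\gamma\mathcal{S}'$ is a homomorphism; the substance is the identity $(\lambda,\lambda')(f(\bullet)\times v)=\lambda(f(\bullet))*\lambda'(v)$ in $\gamma S$ together with its degenerate versions $\lambda(f(\bullet)\times v)=\lambda(f(\bullet))$ and $\lambda'(f(\bullet)\times v)=\lambda'(v)$ for the single-copy indices. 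This is the step where hypothesis \eqref{E:eqa} is used and I expect it to be the main obstacle; its role is the absorption principle that if $\lambda$ and $\lambda'$ are conjugate, then $\lambda(f(\bullet))=g(\lambda(\bullet))=g(\lambda'(\bullet))=\lambda'(f(\bullet))$, whence $\lambda(f(\bullet))*\lambda'(v)=\lambda'(f(\bullet))*\lambda'(v)=\lambda'(f(\bullet))=\lambda(f(\bullet))$. Thus a conjugate product index $(\lambda_k,\lambda'_k)$ receives in $\mathcal{A}'$ the value $g(\lambda_k(\bullet))$, and every factor $\lambda'_k(v)$ arising from a gap $m_k<n_k$ is absorbed inside any nonempty $*$-product to its left.

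Finally, fix a finite coloring of $S$ and a finite set $F\subseteq A$, and set $F'=g(F)\cup\{\lambda'(v)\colon\lambda'\in\Lambda\}\cup\{\lambda'(v)*g(t)\colon\lambda'\in\Lambda,\,t\in F\}$, a finite subset of $\gamma S$. Applying Corollary~\ref{C:abstsup} to the homomorphism $(\tilde f,\mathrm{id}_{\gamma S})$, to $D\times E\in\tilde f(\bullet)$, and to the finite set $F'$, I obtain a sequence $((x_n,y_n))$ basic in $\mathcal{S}'$ with $x_n\in D$ and $y_n\in E$, on which the coloring is $F'$-$\mathcal{A}'$-tame; by the first paragraph $(x_n,y_n)$ is a basic double sequence. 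It remains to unwind this. Take an element of the form \eqref{E:dwa} satisfying the conjugacy condition, and write it as a product along $((x_n,y_n))$ as in the first paragraph. By the absorption principle, the $*$-product in $\gamma S$ of the $\mathcal{A}'$-values of any terminal segment of this product equals $g(\lambda_{k'}(\bullet)\vee\cdots\vee\lambda_l(\bullet))$ if the segment begins at block $k'$, and equals $\lambda'_{k'}(v)*g(\lambda_{k'+1}(\bullet)\vee\cdots\vee\lambda_l(\bullet))$ (or just $\lambda'_l(v)$, if $k'=l$) if it begins in the middle of a gapped block $k'$. Under the further hypothesis that $\lambda_k(\bullet)\vee\cdots\vee\lambda_l(\bullet)\in F$ for every $k\le l$, all of these terminal $*$-products lie in $F'$, so $F'$-$\mathcal{A}'$-tameness applies; it tells us that the color depends only on the $*$-product of all the $\mathcal{A}'$-values, which equals $g(\lambda_0(\bullet))*\cdots*g(\lambda_l(\bullet))=g(\lambda_0(\bullet)\vee\cdots\vee\lambda_l(\bullet))$. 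Since this is a function of $\lambda_0(\bullet)\vee\cdots\vee\lambda_l(\bullet)\in A$, the color of \eqref{E:dwa} depends only on $\lambda_0(\bullet)\vee\cdots\vee\lambda_l(\bullet)$, which is exactly conjugate $F$-$\mathcal{A}$-tameness on $(x_n,y_n)$.
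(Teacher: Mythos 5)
Your proof is correct, and it follows the paper's overall strategy --- tensor the array $\mathcal S$ with itself, pass to $\gamma(\mathcal S\otimes\mathcal S)$ via Proposition~\ref{P:pro1}, and invoke Corollary~\ref{C:abstsup} --- but the auxiliary point based array you build is genuinely different from the paper's. The paper keeps the target semigroup $A$: it indexes its array $\mathcal A'$ by $\Lambda\cup\{(\lambda_1,\lambda_2):\lambda_1(\bullet)=\lambda_2(\bullet)\}$ only, declares $(\lambda_1,\lambda_2)(\bullet,\bullet')=\lambda_1(\bullet)$, and spends hypothesis \eqref{E:eqa} on verifying that $(f\times f',g)$ is a homomorphism (the computation $\lambda_1(f(\bullet))\lambda_2(v)=\lambda_2(f(\bullet))\lambda_2(v)=\lambda_2(f(\bullet))=g(\lambda_1(\bullet))$). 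You instead take the full index set $\Lambda\star\Lambda$, work over the semigroup $\gamma S$ with the identity as the semigroup homomorphism --- so the homomorphism property holds by fiat --- and defer \eqref{E:eqa} to the final unwinding, where it collapses each block's value to $g(\lambda_k(\bullet))$. One small inaccuracy: you say the homomorphism verification ``is the step where hypothesis \eqref{E:eqa} is used,'' but in your setup that verification is just Proposition~\ref{P:pro1} and needs no hypothesis; \eqref{E:eqa} enters only in the absorption computation, which you then carry out correctly. What your route buys is worth noting: by retaining the bare second-copy indices $\lambda'$, your array can realize the ``gapped'' products in \eqref{E:dwa} with $m_k<n_k$ (the factor $\lambda_k'(y_{n_k})$ standing alone at pair $n_k$), whereas the paper's $\Lambda'$, containing only one copy of $\Lambda$ and the conjugate pairs, literally generates only the products with $m_k=n_k$; since the applications (e.g.\ Corollary~\ref{C:genj} and the left-variable Hales--Jewett theorem) genuinely use $m_k<n_k$, your version is the more complete one. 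The price you pay is carrying the larger finite set $F'\subseteq\gamma S$ and checking by hand that every terminal segment's $*$-product lands in $F'$, which you do correctly in both cases (segments starting at the head of a block, and segments starting at the $\lambda'_{k'}(v)$ factor of a gapped block).
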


\begin{proof} 
Let $\mathcal A$ be based on $\{ \bullet\}$. Let
\[
\Lambda' = \Lambda\cup \{ (\lambda_0, \lambda_1)\in \Lambda\times \Lambda\colon \lambda_0(\bullet ) = \lambda_1(\bullet )\}.
\]
Fix a point $\bullet'$.
Let ${\mathcal A}'$ be the point based function array over $A$ and indexed by $\Lambda'$ that is based on the point $(\bullet , \bullet')$ and such that
for $\lambda,\, (\lambda_0, \lambda_1)\in \Lambda'$
\[
\lambda(\bullet ,\bullet') = \lambda(\bullet )\;\hbox{ and }\;(\lambda_0, \lambda_1)(\bullet ,\bullet' ) = \lambda_0(\bullet ).
\]

The index set of $\gamma({\mathcal S}\otimes {\mathcal S})$ and $\gamma {\mathcal S}\otimes \gamma{\mathcal S}$ is $\Lambda*\Lambda 
= \Lambda_0\cup\Lambda_1\cup (\Lambda_0\times \Lambda_1)$, where $\Lambda_0=\Lambda_1=\Lambda$ and 
the union is disjoint. We view $\Lambda'$ as included in 
$\Lambda*\Lambda$ with the copy of $\Lambda$ in $\Lambda'$ identified with $\Lambda_0$ in $\Lambda*\Lambda$. This inclusion 
allows us to consider $\gamma({\mathcal S}\otimes {\mathcal S})$ and $\gamma {\mathcal S}\otimes \gamma{\mathcal S}$ with the index set restricted to $\Lambda'$, 
which we do below. 
Clearly, any homomorphism $\gamma {\mathcal S}\otimes \gamma{\mathcal S}\to \gamma ({\mathcal S}\otimes {\mathcal S})$, 
when the two function arrays are indexed by $\Lambda*\Lambda$, is also a homomorphisms when they are indexed by $\Lambda'$.

Now, to obtain the conclusion of the corollary, 
by Corollary~\ref{C:abstsup}, it suffices to produce a homomorphism ${\mathcal A}'\to \gamma ({\mathcal S}\otimes {\mathcal S})$ such that 
$D\times E$ is in the image of $(\bullet, \bullet')$. 
Let $f'\colon \{ \bullet'\}\to \gamma X$ be given by $f'(\bullet') = v$.
Since, by Proposition~\ref{P:pro1}, there is a homomorphism
\[
(\rho, \pi)\colon \gamma {\mathcal S}\otimes \gamma{\mathcal S}\to \gamma ({\mathcal S}\otimes {\mathcal S})
\]
and $(D,E)\in \rho\circ (f\times f')(\bullet, \bullet')$, it suffices to show that
\[
(f\times f', g)\colon {\mathcal A}'\to \gamma {\mathcal S}\otimes \gamma{\mathcal S}
\]
is a homomorphism. This amounts to showing that if $\lambda, \lambda_0,\lambda_1\in \Lambda$ and $\lambda_0(\bullet ) = \lambda_1(\bullet )$, 
then we have the following two equalities 
\begin{equation}\label{E:need}
\begin{split} 
\lambda((f\times f')(\bullet , \bullet')) &=  g(\lambda(\bullet ,\bullet' )),\\
(\lambda_0, \lambda_1)((f\times f')(\bullet , \bullet' )) &=  g((\lambda_0, \lambda_1)(\bullet ,\bullet' )).
\end{split}
\end{equation}
We verify only the second equality, the first one being easier.
To start, we note that
\[
\lambda_0(f(\bullet )) = g(\lambda_0(\bullet )) = g(\lambda_1(\bullet )) = \lambda_1(f(\bullet )).
\]
Using this equality and \eqref{E:eqa}, we check the second equality in \eqref{E:need} by a direct computation as follows
\[
\begin{split}
(\lambda_0, \lambda_1)((f\times f')(\bullet , \bullet' )) &= (\lambda_0, \lambda_1)(f(\bullet ), f'(\bullet')) =
(\lambda_0, \lambda_1)(f(\bullet ), v)\\
&= \lambda_0(f(\bullet ))\lambda_1(v) = \lambda_1(f(\bullet )) \lambda_1(v)\\
&= \lambda_1(f(\bullet )) = \lambda_0(f(\bullet )) = g(\lambda_0(\bullet ))\\
&= g((\lambda_0, \lambda_1)(\bullet,\bullet' )).\qedhere
\end{split}
\]
\end{proof}

\section{Monoid actions and infinitary Ramsey theorems}\label{S:conn}

{\em In this section, $M$ will be a finite monoid.} 

We connect here the dynamical result of Section~\ref{S:moa} with the algebraic/Ramsey theoretic result of Section~\ref{S:str}. This connection is made possible 
by Corollary~\ref{C:yet}, which translates Theorem~\ref{T:main} (via Corollary~\ref{C:rest}) into a statement about 
the existence of a homomorphism needed for applications of Theorem~\ref{T:abst}. Ramsey theoretic consequences of Corollary~\ref{C:yet} 
are investigated later in the section, in particular, a general Ramsey theoretic result, Corollary~\ref{C:genseq}, is derived from it. 
We also introduce the notion of Ramsey monoid and give a 
characterization of those among almost R-trivial monoids. We use this characterization to determine 
which among the monoids $I_n$ from Section~\ref{Su:exa} are Ramsey. This result implies an answer to Lupini's question \cite{Lu}
on possible extensions of Gowers' theorem. We derive some concrete Ramsey results from our general 
considerations. For example, we obtain the Furstenberg--Katznelson Ramsey theorem for located words.

\subsection{Connecting Theorems~\ref{T:main} and \ref{T:abst}}

In order to apply Theorem~\ref{T:abst}, through Corollaries~\ref{C:abstsup} and \ref{C:abst},  
one needs to produce appropriate homomorphisms between function arrays. 
We show how Theorem~\ref{T:main}, through Corollary~\ref{C:rest},  
gives rise to exactly such homomorphisms. This is done in Corollary~\ref{C:yet} below. 
All the Ramsey theorems are results of combining Corollaries~\ref{C:abstsup} and \ref{C:abst} 
with Corollary~\ref{C:yet}.

Let $S$ be a partial semigroup. For $A\subseteq S$, we say that $S$ is {\bf $A$-directed} if for all $x_1, \dots, x_n\in S$ there exists
$x\in A$ such that $x_1x, \dots, x_n x$ are all defined. So $S$ is directed as defined in \cite{To} if it is $S$-directed. We say that $I\subseteq S$
is a {\bf two-sided ideal in} $S$ if it is non-empty and, for $x, y\in S$ for which $xy$ is defined, $xy\in I$ if $x\in I$ or $y\in I$.

Recall the definitions of the function arrays from \eqref{E:sa} and \eqref{E:sas} in Section~\ref{Su:mose}. 
Recall also from \eqref{E:orsem} the endomorphism action of $M$ on the semigroup $\langle{\mathbb Y}(M)\rangle$ generated 
by ${\mathbb Y}(M)$ as in Section~\ref{Su:sempar}. Denoting this natural action by $\beta$ and taking $y_0\in {\mathbb Y}(M)$, we form 
the point based total function array $\langle {\mathbb Y}(M)\rangle(\beta)_{y_0}$. For the sake of simplicity, we 
denote it by 
\[
\langle{\mathbb Y}(M)\rangle_{y_0}.
\]

The following corollary will be seen to be a consequence of Corollary~\ref{C:rest}. 

\begin{corollary}\label{C:yet}
Assume $M$ is almost R-trivial. Let ${\mathbf y}\in {\mathbb Y}(M)$ be a maximal element,  
and let $\alpha$ be an endomorphism action of $M$ on a partial semigroup $S$. 
Let $I\subseteq S$ be a two-sided ideal such that $S$ is $I$-directed. 
\begin{enumerate}
\item[(i)] There exists a homomorphism $(f,g)\colon \langle {\mathbb Y}(M)\rangle_{\mathbf y} \to \gamma (S(\alpha))$ with 
\[
I\in f(\bullet).  
\]

\item[(ii)] Assume that $M$ is the monoid $J(\emptyset, B)$ for a finite set $B$ as defined in Section~\ref{Su:exa}. 
If $E\subseteq S$ is a right ideal such that $S$ is $E$-directed, 
then there exists a homomorphism $(f,g)\colon \langle {\mathbb Y}(M)\rangle_{\mathbf y} \to \gamma (S(\alpha))$ and an ultrafilter ${\mathcal V} \in \gamma(S)$ such that 
\[
I\in f(\bullet)
\]  
and, additionally, 
\[
E\in {\mathcal V}\;\hbox{ and }\; f(\bullet) * {\mathcal V} = f(\bullet).
\]
\end{enumerate}
\end{corollary}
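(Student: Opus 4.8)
The plan is to deduce Corollary~\ref{C:yet} from Corollary~\ref{C:rest} by first passing to the compact right topological semigroup $\gamma S$ and the induced action of $M$ on it. I would argue that the endomorphism action $\alpha$ of $M$ on the partial semigroup $S$ lifts, via the $\gamma$ construction applied coordinate-wise, to an action of $M$ on the compact right topological semigroup $\gamma S$ by continuous endomorphisms: each $a\in M$ gives an endomorphism $s\mapsto \alpha(a,s)$ of $S$, which extends to a continuous endomorphism of $\gamma S$ by the usual ultrafilter formula, and the ``$E$-directedness''-type clause in the definition of endomorphism action guarantees that the relevant products remain defined so that everything is well posed. Thus $U:=\gamma S$ with this $M$-action satisfies the conventions of Section~\ref{Su:theomain}, and Corollary~\ref{C:rest} applies.

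For part (i), Corollary~\ref{C:rest} yields an $M$-equivariant semigroup homomorphism $g\colon \langle{\mathbb Y}(M)\rangle\to \gamma S$ sending maximal elements of ${\mathbb Y}(M)$ into $I(\gamma S)$, the smallest compact two-sided ideal. I would then set $f(\bullet)=g({\mathbf y})$ for the chosen maximal ${\mathbf y}$; this is an element of $\gamma S$, i.e.\ an ultrafilter on $S$. The point-based total function array $\langle{\mathbb Y}(M)\rangle_{\mathbf y}$ interprets each $a\in M$ as the element $a({\mathbf y})=\beta(a,{\mathbf y})\in\langle{\mathbb Y}(M)\rangle$, while in $\gamma(S(\alpha))$ the index $a$ is interpreted as the map $\gamma X\to\gamma S$ extending $s\mapsto\alpha(a,s)$, with $X=S$ here. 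The homomorphism condition $a(f(\bullet))=g(a(\bullet))$ then reads $\alpha(a,-)^{\gamma}(g({\mathbf y}))=g(\beta(a,{\mathbf y}))$, which is exactly $M$-equivariance of $g$ once one checks that $g$ restricted to the ``point'' respects the two actions — this is where I would be careful to match the definition of $S(\alpha)$ in \eqref{E:sa} against the induced action on $\gamma S$. Finally $I\in f(\bullet)$: since $S$ is $I$-directed with $I$ a two-sided ideal in the partial semigroup $S$, the closure $\overline{I}\subseteq\gamma S$ is a compact two-sided ideal of $\gamma S$, hence contains $I(\gamma S)$; since $g({\mathbf y})\in I(\gamma S)\subseteq\overline{I}$, and membership in $\overline{I}$ for an ultrafilter is exactly ``$I\in f(\bullet)$'', we are done.

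For part (ii), with $M=J(\emptyset,B)$ we have ${\mathbb X}(M)$ of size two (namely $M\cdot M=M$ and $b M=B$ for $b\in B$), so the ``Additionally'' clause of Corollary~\ref{C:rest} gives that $g$ maps the maximal element ${\mathbf y}$ to a \emph{minimal idempotent} $u_1$ of $\gamma S$. I would then invoke Proposition~\ref{P:fa}(iii): given the minimal idempotent $u_1=g({\mathbf y})=f(\bullet)$ and the right ideal $\overline{E}\subseteq\gamma S$ (which is a right ideal of $\gamma S$ because $E$ is a right ideal of $S$ and $S$ is $E$-directed), there exists an idempotent ${\mathcal V}\in\overline{E}\cap E(\gamma S)$ with $u_1{\mathcal V}=u_1$. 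Taking $*$ to be multiplication in $\gamma S$, this gives $f(\bullet)*{\mathcal V}=f(\bullet)$ and $E\in{\mathcal V}$, and $I\in f(\bullet)$ as before.

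The main obstacle I anticipate is the bookkeeping in the first paragraph: verifying cleanly that an endomorphism action of $M$ on the \emph{partial} semigroup $S$ induces a genuine action of $M$ on $\gamma S$ by \emph{continuous} endomorphisms, and that the function array $\gamma(S(\alpha))$ as defined via the $\gamma{\mathcal S}$ construction coincides with ``$(\gamma S)(\text{induced action})$'' so that Corollary~\ref{C:rest}'s equivariant homomorphism is literally a homomorphism of total function arrays in the sense of Section~\ref{Su:mose}. The algebraic ideal/minimal-idempotent facts (closures of ideals, Proposition~\ref{P:fa}) are then routine applications.
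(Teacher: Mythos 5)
Your proposal is correct and follows essentially the same route as the paper: lift $\alpha$ to an action $\gamma\alpha$ on $\gamma S$ by continuous endomorphisms, apply Corollary~\ref{C:rest} to get $g$, set $f(\bullet)=g({\mathbf y})$, use Lemma~\ref{L:dire} (your $\overline{I}$ is the paper's $\gamma I=\{\cu\colon I\in\cu\}$) to get $I\in f(\bullet)$, and in (ii) combine the two-element ${\mathbb X}(M)$ clause of Corollary~\ref{C:rest} with Proposition~\ref{P:fa}(iii). The identification $(\gamma S)(\gamma\alpha)=\gamma(S(\alpha))$ that you flag as the main bookkeeping concern is exactly what the paper dispenses with as ``a quick check of definitions.''
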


To prove Corollary~\ref{C:yet}, we will need the following lemma, whose proof is standard. 

\begin{lemma}\label{L:dire}
Let $S$ be a partial semigroup. 
\begin{enumerate}
\item[(i)] Let $I$ be a two-sided 
ideal in $S$ such that $S$ is $I$-directed.
Then $\{ \cu\in \gamma S\colon I\in \cu\}$ is a compact two-sided 
ideal in $\gamma S$.

\item[(ii)] Let $E$ be a right 
ideal in $S$ such that $S$ is $I$-directed.
Then $\{ \cu\in \gamma S\colon E\in \cu\}$ is a compact right 
ideal in $\gamma S$.

\end{enumerate}
\end{lemma}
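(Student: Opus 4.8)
The plan is to prove Lemma~\ref{L:dire} directly by transferring the ideal structure from $S$ to $\gamma S$ using the definitions of the {\v C}ech--Stone-type semigroup $\gamma S$ and the fact that $S$ is directed (hence $\gamma S$ is a genuine compact right topological semigroup). Recall that the operation on $\gamma S$ is defined, for $\cu, \cv \in \gamma S$, by $A \in \cu * \cv$ iff $\{s \in S \colon \{t \in S \colon st \hbox{ is defined and } st \in A\} \in \cv\} \in \cu$, and that $\gamma S$ carries the compact topology of ultrafilters on $S$.

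For part (i): first observe that $J := \{\cu \in \gamma S \colon I \in \cu\}$ is non-empty, because $S$ is $I$-directed, so for every finite collection of basic open sets coming from elements $x_1, \dots, x_n$ there is a point of $A$ witnessing compatibility, and more simply $I \neq \emptyset$ so the principal ultrafilters at points of $I$ (or, if one insists on the full $\gamma S$, a limit thereof) lie in $J$; in fact $J$ is clopen, being $\{\cu \colon I \in \cu\}$, hence compact. Then I would check that $J$ is a two-sided ideal: take $\cu \in \gamma S$ and $\cv \in J$, and show $\cu * \cv \in J$, i.e.\ $I \in \cu * \cv$. Since $I$ is a two-sided ideal in $S$, for any $s \in S$ the set $\{t \colon st \hbox{ defined}\}$ is nonempty (by $I$-directedness we may even take $t \in I$) and whenever $st$ is defined with $t \in I$ we get $st \in I$; hence $\{t \colon st \hbox{ defined}, st \in I\} \supseteq \{t \in I \colon st \hbox{ defined}\}$, and the latter is in $\cv$ because $I \in \cv$ and $\{t \colon st \hbox{ defined}\} \in \cv$ (this membership is exactly the defining condition for $\cv$ to lie in $\gamma S$ — the directedness requirement built into the definition of $\gamma S$). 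Therefore the inner set is in $\cv$ for \emph{every} $s \in S$, so the outer set is all of $S \in \cu$, giving $I \in \cu * \cv$. The symmetric argument (left multiplication): take $\cu \in J$, $\cv \in \gamma S$, and show $I \in \cu * \cv$; here for $s \in I$ and any $t$ with $st$ defined we have $st \in I$, so $\{t \colon st \hbox{ defined}, st\in I\} = \{t \colon st \hbox{ defined}\} \in \cv$ for all $s$ in the set $I \in \cu$, again giving the result.

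For part (ii): the same computation works, using only that $E$ is a right ideal, to show $\{\cu \colon E \in \cu\}$ is a right ideal in $\gamma S$ — one only needs the ``left multiplication'' half of the above argument, i.e.\ $\cu \in J_E$ and $\cv \in \gamma S$ implies $\cu * \cv \in J_E$, and that uses exactly $st \in E$ whenever $s \in E$ and $st$ defined. Compactness and non-emptiness are as before, using $E$-directedness for non-emptiness and clopenness for compactness.

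The only mildly delicate point — and the one I would be most careful about — is bookkeeping with \emph{partiality}: in a partial semigroup $st$ need not be defined, so every assertion of the form ``$\{t \colon st \in A\} \in \cv$'' must really read ``$\{t \colon st \hbox{ is defined and } st \in A\} \in \cv$'', and one must invoke the fact that $\gamma S$ was defined so that its members are precisely the ultrafilters $\cv$ for which $\{t \colon st \hbox{ defined}\} \in \cv$ for every $s$ (equivalently, for every $s$ the product $st$ is ``$\cv$-almost always defined''). Given that, the ideal computations go through verbatim from the total-semigroup case. This is why the statement notes the proof is standard; I would present it as the two short verifications above, flagging the directedness hypotheses precisely where each is used (non-emptiness of the candidate ideal, and almost-everywhere definedness of products).
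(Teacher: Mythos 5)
Your argument is correct and follows essentially the same route as the paper: non-emptiness of $\{\cu\in\gamma S\colon I\in\cu\}$ via the finite intersection property of $\{I\}\cup\{S/x\colon x\in S\}$ where $S/x=\{y\colon xy\hbox{ is defined}\}$, clopenness for compactness, and the two ultrafilter computations (one using $x\in I$, one using $y\in I$) for the two-sided ideal property, with only the right-ideal half needed for (ii). The one slip is the parenthetical claim that principal ultrafilters at points of $I$ lie in $\gamma S$ --- they need not, since $st$ may fail to be defined for some $s$ --- but this aside is redundant given that you already supply the correct directedness argument.
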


\begin{proof} We give an argument only for (i); the argument for (ii) being similar. 

For $x\in S$, let $S/x=\{ y\in S\colon xy \hbox{ is defined}\}$. 

Let $H= \{ \cu\in \gamma S\colon I\in \cu\}$. Then, by definition, $H$ is clopen. It is non-empty
since, by $I$-directedness of $S$, the family $\{ I\}\cup \{ S/x\colon x\in S\}$ of subsets of $S$
has the finite intersection property, so it is contained in an ultrafilter, which is necessarily an element of $H$.

We check that $I\in \cu * {\mathcal V}$ if  $I\in \cu$ or $I\in {\mathcal V}$.
Assume first that $I\in \cu$. For $x\in I$, $S/x\subseteq \{ y\colon xy\in I\}$, therefore,
since $S/x\in {\mathcal V}$, for each $x\in I$, we have $\{ y\colon xy\in I\} \in {\mathcal V}$.
So $I\subseteq \{ x\colon \{ y\colon xy\in I\} \in {\mathcal V}\}$. Since $I\in \cu$, we get
\[
\{ x\colon \{ y\colon xy\in I\} \in {\mathcal V}\} \in \cu
\]
which means $I\in \cu* {\mathcal V}$. Assume now $I\in {\mathcal V}$. For each $x\in S$, we have
$I\cap (S/x)\subseteq \{ y\colon xy\in I\}$. Therefore, since $I, S/x\in {\mathcal V}$, we have
$\{ y\colon xy\in I\} \in {\mathcal V}$ for each $x\in S$.
So 
\[
\{ x\colon \{ y\colon xy\in I\} \in {\mathcal V}\} = S\in \cu, 
\]
which means $I\in \cu* {\mathcal V}$.
\end{proof}

\begin{proof}[Proof of Corollary~\ref{C:yet}]
(i) We denote by $\gamma I$ the compact two sided ideal $\{ \cu\in \gamma S\colon I\in \cu\}$ from Lemma~\ref{L:dire}. 

Observe that the action $\alpha$ naturally induces an action of $M$ by continuous endomorphisms on $\gamma S$. 
We call this resulting action $\gamma\alpha$. By Corollary~\ref{C:rest}, there exists a homomorphism 
$g\colon \langle {\mathbb Y}(M)\rangle\to \gamma S$ such that all maximal elements of ${\mathbb Y}(M)$ are mapped 
to $I(S)$. In particular, $g({\mathbf y})\in I(\gamma S)$. Since, by Lemma~\ref{L:dire}, $\gamma I$ is a compact two-sided ideal, 
we have $I(\gamma S)\subseteq \gamma I$. Thus, $g({\mathbf y})\in \gamma I$, that is, 
\[
I\in g({\mathbf y}). 
\]
Note now that if we let $f(\bullet) = g({\mathbf y})$, then $(f,g)\colon \langle {\mathbb Y}(M)\rangle_{\mathbf y}\to (\gamma S)(\gamma\alpha)$ 
is a homomorphism. 
A quick check of definitions gives $(\gamma S)(\gamma\alpha) = \gamma (S(\alpha))$. Thus, $(f,g)$ is as required.

(ii) We proceed constructing $g$ and $f$ as in point (i) above. So we have $I\in f(\bullet)$. 
We note that ${\mathbb X}(M)$ consists of two points: the R-class of $1_M$ and the R-class consisting of all elements of $B$. Thus, 
by the last sentence of Corollary~\ref{C:rest}, $f(\bullet)$ is a minimal idempotent in $\gamma(S)$. Now consider 
$J= \{ \cu\in \gamma S\colon E\in \cu\}$, which by Lemma~\ref{L:dire} is a compact right ideal in $\gamma(S)$. By 
Proposition~\ref{P:fa}(iii), there exists an idempotent ${\mathcal V} \in J$ with $f(\bullet) * {\mathcal V} = f(\bullet)$. Since ${\mathcal V}\in J$, 
we have $E\in {\mathcal V}$, which completes the proof. 
\end{proof}

\subsection{Ramsey theorems from monoids}\label{Su:ramo}

Given a sequence $(X_n)$ of sets, let
\[
\langle (X_n)\rangle
\]
consist of all finite sequences $x_{1}\cdots x_{k}$ for $k\in {\mathbb N}$, for which there exist $m_1<\cdots < m_k$ such that $x_i\in X_{m_i}$. 
We make this set into a partial semigroup by declaring the product $(x_{1}\cdots x_{k}) (y_{1}\cdots y_{l})$ of two such sequences defined 
if there exist $m_1<\cdots <m_k<n_1<\cdots <n_l$ such that $x_i\in X_{m_i}$ and $y_i\in X_{n_i}$ and then letting the product be equal to 
the concatenation $x_{1}\cdots x_{k}y_{1}\cdots y_{l}$ of the two sequences. Two special cases of this construction are usually considered. 

(1) We fix a set $X$, and let $X_n=X$ for each $n$. In this case, $\langle (X_n)\rangle$ is a semigroup, not just a partial semigroup, 
of all {\bf words in $X$} with concatenation as the semigroup     
operation. 

(2) We fix a set $X$, and let $X_n = \{ n\}\times X$ for each $n$. In this case, $\langle (X_n)\rangle$ is isomorphic to the partial semigroup 
of {\bf located words in $X$}, that is, all 
partial functions from $\mathbb N$ to $X$ with finite domains, where two such functions $f$ and $g$ have their product defined if all elements of the domain of $f$ are 
smaller than all elements of the domain of $g$, and the product is set to be the function whose graph is the union of the graphs of $f$ and $g$. The identification 
\[
\langle (X_n)\rangle\ni (m_1, x_1)\cdots (m_k, x_k)\to f
\]
where the domain of $f$ is $\{ m_1, \dots , m_k\}$ and $f(m_i) = x_i$, establishes a canonical isomorphism between $\langle (X_n)\rangle$ and 
the partial semigroup of located words. 

Since making the assumptions as in (1) or (2) causes no simplification in our arguments, we work with the general definition of $\langle (X_n)\rangle$ as above. 
We only note that a Ramsey statement (like the ones given 
later in this paper) formulated for located words as in (2) is stronger than the analogous statement formulated for words as in (1). 
One derives the latter from the former by applying the map associating with 
a located word $f$ defined on a set $\{ m_1, \dots , m_k\}$ the word $f(m_1)\cdots f(m_k)$.

A {\bf pointed $M$-set} is a set $X$ equipped with an action of $M$ and a distinguished point 
$x$ such that $Mx=X$. Let $(X_n)$ be a sequence of pointed $M$-sets. The monoid $M$ acts on $\langle (X_n)\rangle$ in the natural manner: 
\[
a(x_1 \cdots x_k) = a(x_1)\cdots a(x_k),\hbox{ for }a\in M\hbox{ and }x_1\cdots x_k\in \langle (X_n)\rangle. 
\]
Note that since $x_i\in X_{m_i}$ implies that $a(x_i)\in X_{m_i}$, the action above 
of each element of $M$ is defined on each element of $\langle (X_n)\rangle$. It is clear that this is an endomorphism action. 

A sequence $(w_i)$ of elements of $\langle (X_n)\rangle$ is called {\bf basic in} $\langle (X_n)\rangle$ 
if for all $i_1<\cdots <i_k$, the product $w_{i_1}\cdots w_{i_k}$ is defined. We note that 
if each $X_n$ is a pointed $M$-set for a monoid $M$ and the sequence $(w_i)$ is basic, then for $i_1<\cdots <i_k$ and $a_1, \dots , a_k\in M$ the product 
$a_1(w_{i_1})\cdots a_k(w_{i_k})$ is defined. For this reason no confusion will arise from using the word basic to describe certain sequences in $\langle (X_n)\rangle$ 
and certain sequences in base sets of function arrays over $\langle (X_n)\rangle$ as in Section~\ref{Su:bata}. 

We will register the following general result that follows from 
Corollary~\ref{C:abstsup} and Corollary~\ref{C:yet}(i). This result allows us to control the color of words as in \eqref{E:aaww} by their ``type" as in \eqref{E:ttww}.  

\begin{corollary}\label{C:genseq}
Assume $M$ is almost R-trivial. Let $F$ be a finite subset of the semigroup $\langle {\mathbb Y}(M)\rangle$, whose operation is denoted by $\vee$.  Let ${\mathbf y}\in {\mathbb Y}(M)$
be a maximal element of the forest ${\mathbb Y}(M)$. Given pointed $M$-sets  
$X_n$, $n\in {\mathbb N}$, for each finite coloring of $\langle (X_n)\rangle$, 
there exist a basic sequence $(w_i)$ in $\langle (X_n)\rangle$ such that 
\begin{enumerate}
\item[(i)] for each $i$, $w_i$ contains the distinguished point of some $X_n$ and 

\item[(ii)] for each $i_0<\cdots < i_k$ and $a_0, \dots , a_k\in M$, the color of 
\begin{equation}\label{E:aaww}
a_0(w_{i_0})\cdots a_k(w_{i_k})
\end{equation}
depends only on 
\begin{equation}\label{E:ttww}
a_0({\mathbf y})\vee \cdots \vee a_k({\mathbf y}) 
\end{equation}
provided $a_0({\mathbf y})\vee \cdots \vee a_k({\mathbf y})\in F$. 
\end{enumerate}
\end{corollary}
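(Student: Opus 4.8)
The plan is to derive Corollary~\ref{C:genseq} by a direct application of Corollary~\ref{C:abstsup}, using Corollary~\ref{C:yet}(i) to supply the required homomorphism. First I would set up the data: given the pointed $M$-sets $X_n$, let $S = \langle (X_n)\rangle$ with its partial semigroup structure, and let $\alpha$ be the natural endomorphism action of $M$ on $S$ described just before the statement. The key point is that $\alpha$ is genuinely an endomorphism action in the sense of Section~\ref{Su:mose}, which should be checked quickly from the definitions (the relevant ``directedness'' clause follows because for $s_1,\dots,s_n\in S$ one can pick a word $t$ supported far enough to the right, containing the distinguished point of some $X_m$, so that each $s_i a(t)$ is defined). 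Next, I need a two-sided ideal $I\subseteq S$ with $S$ being $I$-directed; the natural choice is to let $I$ be the set of all words $x_1\cdots x_k\in S$ such that at least one letter $x_j$ is the distinguished point of its $X_{m_j}$. One checks $I$ is a two-sided ideal (if a word contains a distinguished point, so does any concatenation with it) and that $S$ is $I$-directed (given finitely many words, choose a one-letter word consisting of a distinguished point supported to the right of all of them).

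With this data in hand, Corollary~\ref{C:yet}(i) gives a homomorphism $(f,g)\colon \langle {\mathbb Y}(M)\rangle_{\mathbf y}\to \gamma(S(\alpha))$ with $I\in f(\bullet)$. Now I apply Corollary~\ref{C:abstsup} with $\mathcal A = \langle{\mathbb Y}(M)\rangle_{\mathbf y}$, $\mathcal S = S(\alpha)$, index set $\Lambda = M$ (finite since $M$ is finite), the finite set $F\subseteq \langle{\mathbb Y}(M)\rangle = A$ as given, and $D = I$. This produces, for any finite coloring of $S$, a basic sequence $(x_n)$ of elements of $I$ on which the coloring is $F$-$\mathcal A$-tame. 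Unwinding the definitions: here each $\lambda\in\Lambda$ is an element $a\in M$, $\lambda(\bullet) = a({\mathbf y})\in\langle{\mathbb Y}(M)\rangle$, and for the base sequence $(x_n)$ in $X = S$ the element \eqref{E:razu} is precisely $a_0(x_{n_0})\cdots a_l(x_{n_l})$ computed in the partial semigroup $S = \langle(X_n)\rangle$, i.e.\ of the form \eqref{E:aaww} after renaming $w_i := x_i$. The $F$-$\mathcal A$-tameness conclusion says exactly that the color depends only on $a_0({\mathbf y})\vee\cdots\vee a_l({\mathbf y})$, subject to $a_k({\mathbf y})\vee\cdots\vee a_l({\mathbf y})\in F$ for every $k\le l$; since $F$ is arbitrary finite, I would (if needed) enlarge $F$ so that it is downward closed under taking such ``suffix sums,'' so that the single hypothesis $a_0({\mathbf y})\vee\cdots\vee a_k({\mathbf y})\in F$ in the statement can be matched — alternatively just observe that the statement as phrased only asserts control when the full sum lies in $F$, which is weaker, and argue that replacing $F$ by the finite set of all suffix sums of its elements only strengthens the conclusion. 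Finally, condition (i) of the corollary — that each $w_i$ contains a distinguished point of some $X_n$ — holds because $x_n\in D = I$ and $I$ was defined to consist exactly of such words.

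The main obstacle, and the only place requiring genuine care, is the bookkeeping translating the abstract function-array language of Corollary~\ref{C:abstsup} into the concrete located-words setting: verifying that $S(\alpha)$ really is the function array whose products \eqref{E:razu} are the words \eqref{E:aaww}, that $\gamma(S(\alpha))$ is the appropriate compactification so that Corollary~\ref{C:yet}(i) applies verbatim, and that the side condition \eqref{E:addir} governing $F$-$\mathcal A$-tameness lines up with the hypothesis ``$a_0({\mathbf y})\vee\cdots\vee a_k({\mathbf y})\in F$'' in \eqref{E:ttww}. All of this is routine definition-chasing, but one must be attentive to the fact that \eqref{E:addir} is a condition on every suffix $\lambda_k(\bullet)\vee\cdots\vee\lambda_l(\bullet)$ whereas the corollary statement mentions only prefixes $a_0({\mathbf y})\vee\cdots\vee a_k({\mathbf y})$; this is harmless because it is simply a matter of which index is held fixed, and in any case enlarging $F$ to be closed under the relevant sub-sums resolves it. No deep new idea is needed beyond assembling Corollaries~\ref{C:abstsup} and \ref{C:yet}(i).
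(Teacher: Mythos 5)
Your proposal is correct and follows essentially the same route as the paper: take $S=\langle (X_n)\rangle$ with the natural action $\alpha$, let $I$ be the words containing a distinguished point, get the homomorphism $(f,g)\colon \langle {\mathbb Y}(M)\rangle_{\mathbf y}\to\gamma(S(\alpha))$ with $I\in f(\bullet)$ from Corollary~\ref{C:yet}(i), and feed it into Corollary~\ref{C:abstsup} with $D=I$. The suffix-versus-full-product issue you flag is resolved in the paper exactly as you suggest, by passing to a finite $F'\subseteq\langle{\mathbb Y}(M)\rangle$ containing every value $z_k\vee\cdots\vee z_l$ arising from a representation $z_0\vee\cdots\vee z_l\in F$ with $z_i\in{\mathbb Y}(M)$ (note this is slightly larger than the set of suffixes of the reduced forms of elements of $F$), and then observing that $a_0(\bullet)\vee\cdots\vee a_l(\bullet)\in F$ forces all suffixes into $F'$.
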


\begin{proof}
We regard $S= \langle (X_n)\rangle$ as a partial semigroup with concatenation as a partial semigroup operation and 
with the natural action $\alpha$ of $M$. This leads to the function array $S(\alpha)$. Let $I$ be the subset of $\langle (X_n)\rangle$ 
consisting of all words that contain a distinguished element of some $X_n$. It is clear that $I$ is a two-sided ideal 
and that $\langle (X_n)\rangle$ is $I$-directed. 
By Corollary~\ref{C:yet}(i), there exists a homomorphism $(f,g)\colon \langle {\mathbb Y}(M)\rangle_{\mathbf y} \to \gamma S(\alpha)$ 
with $I\in f(\bullet)$.

It is evident from the definition of $\langle {\mathbb Y}(M)\rangle$ that we can find a finite set $F'\subseteq \langle {\mathbb Y}(M)\rangle$ such that if $z_0, \dots, z_l\in  {\mathbb Y}(M)$ and 
$z_0\vee \cdots \vee z_l\in F$, then $z_k\vee\cdots \vee z_l\in F'$ for each $0\leq k\leq l$.
Now, from the existence of the homomorphism $(f,g)$, by Corollary~\ref{C:abstsup}, we get the existence of a basic sequence 
$(w_i)$ in $I$ such that, for $i_0<\cdots<i_l$ and $a_0, \dots, a_l\in M$,  
the color of $a_0(w_{i_0})\cdots a_l(w_{i_l})$ depends only on 
$a_0(\bullet)\vee \cdots \vee a_l(\bullet)$ as long as $a_k(\bullet )\vee \cdots \vee a_l(\bullet )\in F'$, for each $0\leq k\leq l$. 
Since this last condition is implied by $a_0(\bullet )\vee \cdots \vee a_l(\bullet )\in F$ and  
since for each $a\in M$, $a(\bullet) = a({\mathbf y})$, we are done. 
\end{proof}

We also have the following result analogous to Corollary~\ref{C:genseq} for the monoid $J(\emptyset ,B)$, as defined in Section~\ref{Su:exa}, 
that follows from Corollary~\ref{C:abstsup} and Corollary~\ref{C:yet}(ii). 

\begin{corollary}\label{C:genj}
Consider the monoid $J(\emptyset, B)$ for a finite set $B$. 
Let $X_n$, $n\in {\mathbb N}$ be pointed $J(\emptyset, B)$-sets, and let $E$ be a right ideal such that $\langle (X_n)\rangle$ is $E$-directed. 
For each finite coloring of $\langle (X_n)\rangle$, 
there exist a basic double sequence $(v_i, v_i')$ in $\langle (X_n)\rangle$, with $x$ occurring in each $v_i$ and with $v_i'\in E$ for each $i$, such that 
the coloring is fixed on sequences of the form 
\begin{equation}\label{E:refix}
b_0(v_{i_0}) b'_0(v'_{j_0}) b_1(v_{i_1}) b'_1(v'_{j_1}) \cdots b_n(v_{i_n}) 
\end{equation}
for $b_0, b_0', b_1, b_1', \dots, b_n\in B$ and $i_0 \leq j_0 < i_1 \leq j_1< \cdots <i_n$. 
\end{corollary}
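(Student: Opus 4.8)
The plan is to obtain this statement as a direct application of Corollary~\ref{C:yet}(ii) together with Corollary~\ref{C:abst}, in close parallel to the way Corollary~\ref{C:genseq} is deduced from Corollary~\ref{C:yet}(i) and Corollary~\ref{C:abstsup}. First I would set $S = \langle (X_n)\rangle$, viewed as a partial semigroup under concatenation equipped with the natural endomorphism action $\alpha$ of $M = J(\emptyset, B)$ coming from the pointed $M$-set structure on the $X_n$; this gives the function array $S(\alpha)$ indexed by $M$. Let $I \subseteq S$ be the set of all words containing the distinguished point of some $X_n$; as in the proof of Corollary~\ref{C:genseq}, $I$ is a two-sided ideal and $S$ is $I$-directed, while $E$ is a right ideal with $S$ being $E$-directed by hypothesis. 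Applying Corollary~\ref{C:yet}(ii) with these data (noting $M = J(\emptyset, B)$ so that ${\mathbb X}(M)$ has exactly two elements), I obtain a homomorphism $(f,g)\colon \langle {\mathbb Y}(M)\rangle_{\mathbf y} \to \gamma(S(\alpha))$ and an ultrafilter ${\mathcal V}\in \gamma(S)$ with $I \in f(\bullet)$, $E \in {\mathcal V}$, and $f(\bullet) * {\mathcal V} = f(\bullet)$, where $\mathbf y$ is any maximal element of ${\mathbb Y}(M)$.

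Next I would verify that the hypothesis \eqref{E:eqa} of Corollary~\ref{C:abst} holds with $v = {\mathcal V}$. For each $a \in M$ we need $a(f(\bullet))\, a({\mathcal V}) = a(f(\bullet))$ in $\gamma S$. For $a = 1_M$ this is just $f(\bullet) * {\mathcal V} = f(\bullet)$. For $a \in B$, the key point is that $M = J(\emptyset, B)$ satisfies $a \cdot b = b$ for all $b \in M \setminus \{1_M\}$, so the induced endomorphism action has $a(u) = b(u)$ for all $a,b \in B$ and, more to the point, the image of the action of any $a \in B$ lands in a "constant" part of the structure — concretely, $a$ applied to a word replaces every letter's $M$-coordinate by $a$, and the needed identity should reduce to the $1_M$ case after pushing the $a$ through $g$ and using that $g$ is $M$-equivariant together with $f(\bullet) = g(\mathbf y)$ and the fact that $\mathbf y$ is maximal so $a(\mathbf y)$ sits below $\mathbf y$ in the forest, making $a(g(\mathbf y)) \cdot g(\mathbf y)$-type products collapse via condition (ii) of Theorem~\ref{T:main}. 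I would spell this out by computing $a(f(\bullet)) = g(a(\mathbf y))$ and observing that $g(a(\mathbf y)) * {\mathcal V} = g(a(\mathbf y))$ follows from $f(\bullet) * {\mathcal V} = f(\bullet)$ by multiplying on the left by the idempotent $g(a(\mathbf y))$, using $g(a(\mathbf y)) * f(\bullet) = g(a(\mathbf y))$ (which holds since $a(\mathbf y) \leq_{{\mathbb Y}(M)} \mathbf y$ and $g$ is order reversing into idempotents).

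Finally, with \eqref{E:eqa} in hand, I would invoke Corollary~\ref{C:abst}: choosing the finite set $F \subseteq A = \langle {\mathbb Y}(M)\rangle$ to be large enough to contain all relevant "types" $b_0(\mathbf y) \vee \cdots \vee b_n(\mathbf y)$ for $b_i \in B$ (there are only finitely many such, since $B$ is finite and these elements lie in the finite sub-poset generated by $\{b(\mathbf y) : b \in B\} \cup \{\mathbf y\}$), and taking $D = I$, $E = E$, I get a basic double sequence $(v_i, v_i')$ with $v_i \in I$ (so $x$ occurs in each $v_i$) and $v_i' \in E$ on which the coloring is conjugate $F$-$\mathcal A$-tame. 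It remains to observe that for $M = J(\emptyset, B)$ the sequences of the shape \eqref{E:dwa} with conjugate constraints and all letters drawn from $B$ are exactly the expressions \eqref{E:refix}, and that the type $b_0(\mathbf y) \vee \cdots \vee b_n(\mathbf y)$ is in fact constant: since every $b \in B$ acts the same way ($b \cdot c = c$ for $c \ne 1_M$, and here all relevant letters are non-identity), we have $b(\mathbf y)$ independent of the choice of $b \in B$, so all these joins equal a single element of $\langle {\mathbb Y}(M)\rangle$, which means "depends only on the type" becomes "is fixed." The main obstacle I anticipate is the bookkeeping in verifying \eqref{E:eqa} — i.e., correctly translating the monoid identity $ab = b$ through the chain $g \circ (\text{action}) = (\text{action}) \circ g$ and the order-reversing property of $g$ — rather than anything structurally deep; everything else is a matching-up of definitions with Corollary~\ref{C:genseq}'s proof.
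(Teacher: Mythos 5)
Your overall route is the paper's: set $S=\langle (X_n)\rangle$ with the two-sided ideal $I$ of words containing a distinguished point, invoke Corollary~\ref{C:yet}(ii) to obtain $(f,g)$ and ${\mathcal V}$, and feed these into Corollary~\ref{C:abst} with $v={\mathcal V}$, $D=I$, the given $E$, and $F=\{\{{\mathbf b}\}\}$; since $b({\mathbf y})=\{{\mathbf b}\}$ for every $b\in B$, all joins $b_0({\mathbf y})\vee\cdots\vee b_n({\mathbf y})$ collapse to the single element $\{{\mathbf b}\}$ and all elements of $B$ are conjugate, so conjugate $F$-tameness becomes constancy on expressions of the form \eqref{E:refix}. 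That matching-up is exactly what the paper does.

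However, your verification of hypothesis \eqref{E:eqa} is flawed at the one point where you try to be precise. First, the claimed identity $g(a({\mathbf y}))*f(\bullet)=g(a({\mathbf y}))$ is backwards: since $a({\mathbf y})=\{{\mathbf b}\}\leq_{{\mathbb Y}(M)}{\mathbf y}$ and $g$ is a semigroup homomorphism respecting \eqref{E:rel} (equivalently, is order reversing), one gets $g(a({\mathbf y}))*g({\mathbf y})=g(a({\mathbf y})\vee{\mathbf y})=g({\mathbf y})=f(\bullet)$, not $g(a({\mathbf y}))$; hence left-multiplying $f(\bullet)*{\mathcal V}=f(\bullet)$ by $g(a({\mathbf y}))$ merely reproduces that same equation and yields nothing new. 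Second, even if your identity did hold, it would give $a(f(\bullet))*{\mathcal V}=a(f(\bullet))$, whereas \eqref{E:eqa} requires $a(f(\bullet))\, a({\mathcal V})=a(f(\bullet))$ --- note the $a$ applied to the second factor as well. The correct verification is one line and needs none of this machinery: each $a\in M$ acts on $\gamma S$ as a continuous endomorphism, so applying $a$ to both sides of $f(\bullet)*{\mathcal V}=f(\bullet)$ gives $a(f(\bullet))* a({\mathcal V})=a(f(\bullet))$ directly. With that substitution your argument is complete and coincides with the paper's.
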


\begin{proof}
We start with analyzing ${\mathbb X}(M)$, ${\mathbb Y}(M)$, and $\langle {\mathbb Y}(M)\rangle$. 
The partial order ${\mathbb X}(M)$ consists of two elements: the R-class of $1_M$, which we again denote by $1_M$, and the common R-class of 
all $b\in B$, which we denote by $\bf b$.
Clearly ${\bf b}\leq_{{\mathbb X}(M)} 1_M$. Thus, ${\mathbb Y}(M)$ has a unique maximal element $\{ {\bf b}, 1_M\}$, which we denote by $\bf y$. Note that for each $b\in B$ 
\[
b({\bf y}) = \{ {\bf b}\}.
\]
Two important to us conclusions follow from this equality. First, computing in $\langle {\mathbb Y}(M)\rangle$, for $b_0, \dots, b_n\in B$, we get 
\begin{equation}\label{E:hro}
b_0({\bf y})\vee \cdots \vee b_n({\bf y}) = \{ {\bf b}\}, 
\end{equation}
that is, the product $b_0({\bf y})\vee \cdots \vee b_n({\bf y})$ does not depend on $b_0, \dots, b_n$. Second, in the point based function array $\langle {\mathbb Y}(M)\rangle_{\bf y}$ over $M$, for $b_0, b_1\in B$,
we have 
\begin{equation}\label{E:hro2}
b_0(\bullet) = b_0({\bf y}) = b_1({\bf y}) = b_1(\bullet),  
\end{equation}
that is, all elements of $B$ are conjugate to each other (with the notion of conjugate as in Corollary~\ref{C:abst}). 

Let $I$ be the subset of $\langle (X_n)\rangle$ 
consisting of all words that contain a distinguished element of some $X_n$. The set $I$ is a two-sided ideal 
and that $\langle (X_n)\rangle$ is $I$-directed. 
By Corollary~\ref{C:yet}(ii), there exists a homomorphism 
\[
(f,g)\colon \langle {\mathbb Y}(M)\rangle_{\mathbf y} \to \gamma ( \langle (X_n(B))\rangle(\alpha))
\] 
and an ultrafilter 
${\mathcal V} \in \gamma( \langle (X_n(B))\rangle)$ such that 
\[
I\in f(\bullet),\, E\in {\mathcal V}\;\hbox{ and }\; f(\bullet) * {\mathcal V} = f(\bullet).
\]

Let $F$ consist of one point $\{ {\bf b}\}\in \langle {\mathbb Y}(M)\rangle$. So $F$ is a finite subset of $\langle {\mathbb Y}(M)\rangle$. Given a finite coloring of 
$\langle {\mathbb Y}(M)\rangle$, Corollary~\ref{C:abst} and the existence of the homomorphism $(f,g)$ above imply 
that there exists a basic sequence $(v_i, v_i')$ in $\langle (X_n(B))\rangle$ with $v_i'\in E$ that is conjugate $F$-$\langle {\mathbb Y}(M)\rangle$-tame. In view of the definition of $F$, 
\eqref{E:hro}, and \eqref{E:hro2} the coloring is fixed on sequences of the form \eqref{E:refix}. 
\end{proof}

Let $(X_n)$ be a sequence of pointed $M$-sets for a finite monoid $M$. We say that $(X_n)$ has the {\bf Ramsey property} if
for each finite coloring of $\langle (X_n)\rangle$ there exists a basic sequence $(w_i)$ in $\langle (X_n)\rangle$ such that
\begin{enumerate}
\item[---] each $w_i$ contains the distinguished element of $X_n$ as an entry;

\item[---] all words of the form
\[
a_0(w_{i_0})\cdots a_l(w_{i_l}),
\]
where $l\in {\mathbb N}$, $a_i\in M$ with at least one $a_i= 1_M$, are assigned the same color.
\end{enumerate}

A monoid $M$ is called {\bf Ramsey} if each sequence of pointed $M$-sets has the Ramsey property.

We deduce from Corollary~\ref{C:genseq} the following result characterizing Ramsey monoids. 

\begin{corollary}\label{C:moral}
\begin{enumerate}
\item[(i)] If $M$ is almost R-trivial and the partial order ${\mathbb X}(M)$ is linear, then $M$ is Ramsey.

\item[(ii)] If ${\mathbb X}(M)$ is not linear, then the sequence of pointed $M$-sets $X_n={\mathbb X}(M)$, with the canonical action of $M$
and with the R-class of $1_M$ as the distinguished point, does not have the Ramsey property.
\end{enumerate}
Thus, if $M$ is almost R-trivial, then $M$ is Ramsey if and only if the partial order ${\mathbb X}(M)$ is linear.
\end{corollary}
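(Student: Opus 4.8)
The plan is to prove (i) and (ii) separately; the final equivalence follows at once by combining them with the fact (noted after Corollary~\ref{C:yet}) that every finite almost R-trivial monoid falls under Corollary~\ref{C:genseq}.

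For part (i), I would deduce it directly from Corollary~\ref{C:genseq}. The key observation is that when ${\mathbb X}(M)$ is linear, the forest ${\mathbb Y}(M) = {\rm Fr}({\mathbb X}(M))$ is also linear, hence the semigroup $\langle {\mathbb Y}(M)\rangle$ coincides with ${\mathbb Y}(M)$ itself (as remarked after \eqref{E:rel}), and the operation $\vee$ is just the ``max'' operation: $z_0 \vee \cdots \vee z_k = \max_i z_i$. Now fix a maximal element ${\mathbf y}$ of ${\mathbb Y}(M)$ and take $F = \{{\mathbf y}\}$, which is a finite subset of $\langle {\mathbb Y}(M)\rangle$. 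Given $a_0,\dots,a_k \in M$ with at least one $a_i = 1_M$, I need to check that $a_0({\mathbf y}) \vee \cdots \vee a_k({\mathbf y}) = {\mathbf y}$. Since ${\mathbf y}$ is maximal and $1_M$ acts as the identity, the term with $a_i = 1_M$ contributes ${\mathbf y}$, which is $\geq_{{\mathbb Y}(M)}$ every other term $a_j({\mathbf y})$ (here I use that $\pi\colon {\mathbb Y}(M) \to {\mathbb X}(M)$ is $M$-equivariant and order-preserving, that ${\mathbb X}(M)$ is linear with the class of $1_M$ on top, so $a_j({\mathbf y})$ sits below ${\mathbf y}$ in the linear order ${\mathbb Y}(M)$ — I should verify this comparison carefully using that ${\mathbf y}$ is a maximal chain ending at the top element of ${\mathbb X}(M)$). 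Hence the max is ${\mathbf y}\in F$, so by Corollary~\ref{C:genseq}(ii) the color of $a_0(w_{i_0})\cdots a_k(w_{i_k})$ depends only on ${\mathbf y}$, i.e. is constant across all such tuples. Together with Corollary~\ref{C:genseq}(i) this gives exactly the Ramsey property for $(X_n)$, and since $(X_n)$ was arbitrary, $M$ is Ramsey.

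For part (ii), suppose ${\mathbb X}(M)$ is not linear, so there exist $aM, bM \in {\mathbb X}(M)$ that are $\leq_{{\mathbb X}(M)}$-incomparable. Take $X_n = {\mathbb X}(M)$ for all $n$, with the left-translation action and distinguished point the class of $1_M$ (which generates ${\mathbb X}(M)$ as a pointed $M$-set since $c \cdot 1_M M = cM$). The element of $\langle (X_n)\rangle$ that I want to use to defeat the Ramsey property is a two-letter word: for a basic sequence $(w_i)$ with each $w_i$ containing the distinguished point $1_M M$, consider words of the form $a(w_{i_0})\, b(w_{i_1})$ versus $1_M(w_{i_0})\, 1_M(w_{i_1})$ and similar — the point is to cook up a coloring of $\langle (X_n)\rangle$ for which no basic sequence can make all words $a_0(w_{i_0})\cdots a_l(w_{i_l})$ (with some $a_i = 1_M$) monochromatic. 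The natural coloring: color a word $u = u_1\cdots u_m$ over ${\mathbb X}(M)$ by recording whether, among the letters $u_j$ that are images of a distinguished occurrence, the first such letter is $\leq_{{\mathbb X}(M)}$, $\geq_{{\mathbb X}(M)}$, or incomparable to the letter $aM$ — something of this flavor. Applying $a$ to a $w_i$ turns its distinguished $1_M M$ into $aM$, while applying $1_M$ leaves it as $1_M M$ (the top element); since $aM$ and $bM$ are incomparable, the words $a(w_{i_0}) 1_M(w_{i_1})$ and $b(w_{i_0}) 1_M(w_{i_1})$ would be forced into different color classes, contradicting monochromaticity. I would then write this coloring down precisely and verify it obstructs the Ramsey property.

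The main obstacle is part (ii): constructing an explicit finitely-valued coloring of $\langle (X_n)\rangle$ that provably cannot be trivialized on any basic sequence, and checking all cases of the defining clause of the Ramsey property (in particular handling words of arbitrary length $a_0(w_{i_0})\cdots a_l(w_{i_l})$, not just two-letter ones). Part (i) is essentially bookkeeping on top of Corollary~\ref{C:genseq}, with the only real content being the order comparison $a_j({\mathbf y}) \leq_{{\mathbb Y}(M)} {\mathbf y}$ in the linear forest, which must be stated carefully.
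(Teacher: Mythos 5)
Your part (i) follows the paper's route (reduce to Corollary~\ref{C:genseq} with $F$ a singleton containing the top element), but it rests on a false intermediate claim: when ${\mathbb X}(M)$ is linear, ${\mathbb Y}(M)={\rm Fr}({\mathbb X}(M))$ is \emph{not} linear in general. For instance, if ${\mathbb X}(M)=\{p<q\}$ then $\{p\}$ and $\{q\}$ are incomparable in ${\rm Fr}({\mathbb X}(M))$ (and $\{q\}\not\leq\{p,q\}$ under end-extension), so $\langle{\mathbb Y}(M)\rangle\neq{\mathbb Y}(M)$ and $\vee$ is not globally ``max.'' The argument survives anyway, because you only ever multiply elements of the form $a({\mathbf y})$: the unique maximal element of ${\mathbb Y}(M)$ is the full chain ${\mathbb X}(M)$, each $a({\mathbf y})$ is (by strongness of ${\mathbb X}(M)$) the initial segment of predecessors of $[a]$, hence $a({\mathbf y})\leq_{{\mathbb Y}(M)}{\mathbf y}$, and since ${\mathbb Y}(M)$ is a forest all these elements lie on one chain, so the product collapses to ${\mathbf y}$ whenever some $a_i=1_M$. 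The paper packages exactly this as an order-preserving $M$-equivariant embedding of ${\mathbb X}(M)$ into ${\mathbb Y}(M)$, $[a]\mapsto\{[c]:[c]\leq[a]\}$; you should replace the linearity claim by this observation.

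Part (ii) has a genuine gap. You have the right counterexample data (incomparable $[a]$, $[b]$, the words $a(w_{i_0})\,1_M(w_{i_1})$ versus $b(w_{i_0})\,1_M(w_{i_1})$), but the coloring you sketch is not legitimate: a coloring of $\langle(X_n)\rangle$ sees only a word over ${\mathbb X}(M)$ and cannot refer to ``letters that are images of a distinguished occurrence'' --- that provenance information is not part of the word. The missing idea is that incomparability of $[a]$ and $[b]$ gives $[b]\notin a{\mathbb X}(M)$ and $[a]\notin b{\mathbb X}(M)$, so $[b]$ occurs \emph{nowhere} in $a(w)$ and $[a]$ occurs nowhere in $b(w)$, for any word $w$. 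This is what makes the paper's honest coloring work: color $w$ by $0$ if $[a]$ occurs in $w$ and its first occurrence precedes every occurrence of $[b]$, and by $1$ otherwise. Then for any basic $(w_i)$ with $[1_M]$ occurring in each $w_i$, the word $a(w_0)w_1$ gets color $0$ (the occurrence of $[a]$ in $a(w_0)$ precedes any $[b]$, which can only appear in $w_1$) while $b(w_0)w_1$ gets color $1$, and both are of the form required by the Ramsey property. Without the observation about $a{\mathbb X}(M)$ and $b{\mathbb X}(M)$, no coloring of the flavor you describe can be verified, since other letters of $w_{i_0}$ might a priori be sent to $[b]$ by $a$.
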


\begin{proof} (i) Fix a sequence of pointed $M$-sets $(X_n)$. We need to show that it has the Ramsey property.
One checks easily that linearity of ${\mathbb X}(M)$ implies that there exists an order preserving $M$-equivariant
embedding of ${\mathbb X}(M)$ to ${\mathbb Y}(M)$ mapping the top element of ${\mathbb X}(M)$ to a maximal element 
of ${\mathbb Y}(M)$---map
the R-class of $a$ to the set of all predecessors of the class of $a$ in ${\mathbb X}(M)$. We identify ${\mathbb X}(M)$ 
with its image in ${\mathbb Y}(M)$. 
Note that, by linearity of ${\mathbb X}(M)$,  ${\mathbb X}(M) = \langle {\mathbb X}(M)\rangle$,  
so ${\mathbb X}(M)$ is a subsemigroup of $\langle {\mathbb Y}(M)\rangle$.
Let $y_0 $ be the top element of ${\mathbb X}(M)$, which is 
the R-class of $[1_M]$. Since $[a]\vee [1_M] = [1_M]\vee [a] = [1_M]$, for the R-class $[a]$ of each $a\in M$, it follows 
immediately from Corollary~\ref{C:genseq} that $(X_n)$ has the Ramsey property. 
Since $(X_n)$ was arbitrary, we get the conclusion of (i).

(ii) Let $X_n$, $n\in {\mathbb N}$, be the pointed $M$-sets described in the statement of (ii).
Let $a,b\in M$ be two elements whose R-classes $[a]$ and $[b]$ are incomparable in ${\mathbb X}(M)$. Then $a\not\in bM$ and $b\not\in aM$,
which implies that
\begin{equation}\label{E:col}
[a]\not\in b{\mathbb X}(M)\;\hbox{ and }\;[b]\not\in a{\mathbb X}(M).
\end{equation}
We color $w\in \langle (X_n)\rangle$ with color $0$ if $[a]$
occurs in $w$ and its first occurrence precedes all the occurrences of $[b]$, if there are any. Otherwise, we color $w$ with color $1$. Let
$(w_i)$ be a basic sequence in $\langle (X_n)\rangle$ with the R-class $[1_M]$ of $1_M$ occurring in
each $w_i$. Then, in $a(w_0) w_1$,
$[a]$ occurs in $a(w_0)$ and, by \eqref{E:col}, $[b]$ does not occur in $a(w_0)$. It follows that $a(w_0) w_1$ is assigned color $0$. 
For similar reasons, $b(w_0) w_1$ is assigned color $1$. Thus, the Ramsey property fails for $(X_n)$.
\end{proof}

Using the characterization from Corollary~\ref{C:moral}, a more concrete characterization of Ramsey monoids among almost 
R-trivial monoids was recently given in \cite{KP}.
It turns out that almost R-trivial monoids are rarely Ramsey. Note, however, that, by Corollary~\ref{C:genseq}, 
one obtains Ramsey theorems even from non-Ramsey monoids; 
see the Furstenberg--Katznelson theorem in Section~\ref{Su:conc}.

\subsection{Some concrete applications}\label{Su:conc} 

We will present detailed arguments for showing that 
\begin{enumerate}
\item[(1)] the Furstenberg--Katzenlson theorem for located words and 

\item[(2)] the Hales--Jewett theorem for located left variable words 
\end{enumerate} 
follow from combining Corollaries~\ref{C:abstsup} and \ref{C:abst} with Corollary~\ref{C:yet}, in case of (1) through 
Corollary~\ref{C:genseq}. We also indicate how (3) Gowers' theorem and (4) Lupini's theorem are special cases of Corollary~\ref{C:genseq}.
We provide more details in the case (1) and (2) 
since (1) appears to be new and the derivation of (2) appears to be the most subtle one involving an application of Corollary~\ref{C:abst}. We emphasize however that all these derivations essentially amount to 
translations and specifications of our general results.

{\bf 1.} {\bf Furstenberg--Katznelson's theorem for located words.} We state here the Furstenberg--Katznelson theorem 
for located words. The original version from \cite{FK} is stated in terms of words and, as explained in the beginning of Section~\ref{Su:ramo}, is implied by our statement. We refer the reader 
to \cite{FK} for the original version. What follows in this point is, in essence, a translation of a particular case of Corollary~\ref{C:genseq} to the language used to state the Furstenberg--Katznelson theorem.  

We fix an element $x$, called a variable. For a set $C$ with $x\not\in C$, let 
\begin{equation}\label{E:exes}
X_n(C) = \{ n\}\times (C\cup \{ x\}).
\end{equation}

Fix now two finite disjoint sets $A, B$ with $x\not\in A\cup B$. If $w\in \langle (X_n(A\cup B))\rangle$, $x$ occurs in $w$, and $c\in A\cup B\cup \{ x\}$, then 
\begin{equation}\label{E:subb}
w[c] 
\end{equation}
is an element of $\langle X_n(A\cup B)\rangle$ obtained from $w$ by replacing each occurrence of $x$ by $c$. 

A {\bf reduced string in} $A$ is a sequence $a_0\cdots a_k$, possibly empty, such that $a_i\not= a_{i+1}$ for all $i<k$. 
With a sequence $c_0\cdots c_k$ with entries in $A\cup B$, we associate a reduced string $\overline{c_0\cdots c_k}$ in $A$ as follows. 
We delete all entries coming from $B$ thereby forming a sequence $c'_0\cdots c'_{k'}$ for some $k'\leq k$. In this sequence, we replace each run of each element of $A$  
by a single occurrence of that element forming a sequence $c_0''\cdots c''_{k''}$ with $k''\leq k'$. This sequence is $\overline{c_0\cdots c_k}$.  

Here is the statement of the Furstenberg--Katznelson theorem for located words. 

\smallskip

\noindent{\em Let $F$ be a finite set of reduced strings in $A$. 
Color $\langle X_n( A\cup B) \rangle$ with finitely many colors. There exists a basic sequence $(w_i)$ in $\langle X_n(B)\rangle$ 
such that $x$ occurs in each $w_i$ and, for each $n_0<\cdots < n_k$ and $c_0, \dots , c_k\in A\cup B$, the color of 
\[
w_{n_0}[c_0]\cdots w_{n_k}[c_k]
\] 
depends only on $\overline{c_0\cdots c_k}$ provided $\overline{c_0\cdots c_k}\in F$. }
\smallskip

This theorem is obtained by considering the monoid $J(A,B)$ from Section~\ref{Su:exa}. For brevity's sake, set 
\[
M = J(A,B).
\]
Forgetting about the Ramsey statement for a moment, we make some computations in ${\mathbb Y}(M)$. 

Observe that all elements of $B$ are in the same R-class, which we denote by $\bf b$, 
the R-class of each element of $A$ consists only of this element only, and the R-class of $1_M$ consists only of $1_M$. So, with some abuse of notation, 
we can write 
\[
{\mathbb X}(M) = \{ {\bf b}, 1_M\}\cup A.
\]
We have that, for each $a\in A$, 
\[
{\bf b}\leq_{{\mathbb X}(M)} a\leq_{{\mathbb X}(M)} 1_M
\]
and elements of $A$ are incomparable with each other with respect to $\leq_{{\mathbb X}(M)}$. 
The action of $M$ on ${\mathbb X}(M)$ is induced by the action of $M$ on itself by left multiplication. 

Pick $a_0\in A$. Note that the sets 
\[
\{ {\bf b}\}, \{ {\bf b}, a\}, \hbox{ for }a\in A, \hbox{ and }\{ {\bf b}, a_0, 1_M\} 
\]
are in ${\mathbb Y}(M)$, and we write $\bf b$, $a$, $1_0$ for these elements, respectively.
We notice that 
\begin{equation}\label{E:bar} 
{\bf b}\leq_{{\mathbb Y}(M)} a,\hbox{ for all } a\in A,\;\hbox{ and }\; {\bf b}, a_0\leq_{{\mathbb Y}(M)} 1_0, 
\end{equation}
and $\leq_{{\mathbb Y}(M)}$ does not relate any other two of the above 
elements. 
Furthermore, $1_0$ is a maximal element of ${\mathbb Y}(M)$. 
The action of $M$ on these elements is induced by the left multiplication action of $M$ on itself, so 
\[
a(1_0) =a \hbox{ and } b(1_0) = {\bf b},\hbox{ for }a\in A, b\in B. 
\]

Using relations \eqref{E:bar}, we observe that, for $c_0, \dots, c_k\in \{{\bf b}\}\cup A$, the product 
\[
c_0\vee\cdots \vee c_k
\]
in the semigroup of $\langle {\mathbb Y}(M)\rangle$ is equal to $\bf b$ if $c_i = {\bf b}$, for each $i\leq k$, 
or is obtained from $c_0\vee\cdots \vee c_k$ 
by removing all occurrences of $\bf b$ and shortening a run of each $a\in A$ to one occurrence of $a$, if 
$c_i\in A$, for some $i\leq k$. Thus, the map assigning to a sequence $c_0 \cdots c_k$ of elements of $A\cup B$ 
the element $c_0\vee\cdots \vee c_k$ of ${\mathbb Y}(M)$ factors through the map 
$c_0 \cdots c_k\to \overline{c_0 \cdots c_k}$ giving an injective map $\overline{c_0 \cdots c_k}\to c_0\vee\cdots \vee c_k$.

We note that $M$ acts on $X_n(A\cup B)$ as follows. We identify $X_n(A\cup B)$ with $M$ by identifying $(n,x)$ with $1_M$ and $(n,c)$ with $c$ for 
$c\in A\cup B$. Since $M$ acts on $M$ by left multiplication, this identification gives an action of $M$ on $X_n(A\cup B)$. We make 
$(n,x)$ the distinguished element, thereby turning $X_n(A\cup B)$ into a pointed $M$-set.  

We apply Corollary~\ref{C:genseq} to this sequence of pointed $M$-sets. In the statement of the theorem, we take ${\bf y}=1_0$ and, for the finite subset $\langle{\mathbb Y}(M)\rangle$, we take 
\[
\{ c_0\vee\cdots \vee c_k\colon c_0 \cdots c_k \in F\}. 
\]
Now, an application of Corollary~\ref{C:genseq} gives a basic sequence $(w_i')$ in 
$\langle (X_n(A\cup B))\rangle$. Let $w_i\in \langle X_n(B)\rangle$ be gotten from $w_i'$ by replacing each 
value taken in $A$ by $x$. By the discussion above, the sequence $(w_i)$ is as required.

{\bf 2.} {\bf The Hales--Jewett theorem for left-variable words.} 
The Hales--Jewett theorem for located words is just the Furstenberg--Katznelson theorem for located words with $A=\emptyset$.  
We state now and prove the Hales--Jewett theorem for located left-variable words as in \cite[Theorem~2.37]{To}. We use the notation 
as in \eqref{E:exes} and \eqref{E:subb}. We call $w\in \langle (X_n(C))\rangle$ {\bf left-variable} if the first entry in the sequence $w$ is of the form $(n,x)$ for some $n\in {\mathbb N}$.

\smallskip

\noindent {\em Let $B$ be a finite set. For each finite coloring of $ \langle (X_n(B))\rangle$, there exists a basic sequence $(w_i)$ in $\langle (X_n(B))\rangle$ such that $x$ 
does not occur in $w_0$, each $w_i$ with $i\geq 1$ is left-variable, and the color of the sequences 
\begin{equation}\label{E:fixat}
w_0w_{n_0}[b_0]\cdots w_{n_k}[b_k],
\end{equation}
with $0<n_0<\cdots < n_k$ and $b_0, \dots , b_k\in B$, is fixed. 
}

\smallskip

We will use the monoid $M=J(\emptyset, B)$ and apply Corollary~\ref{C:genj}.
Note that $M =B\cup \{ 1_M\}$ is in a bijective correspondence  
with $X_n(B)$ mapping each $b\in B$ to $(n,b)$ and $1_M$ to $(n, x)$. We transfer the left multiplication action of $M$ to $X_n(B)$ and make $(n,x)$ the distinguished 
element of $X_n(B)$. Thus, $X_n(B)$ is a pointed $M$-set. We consider the semigroup $\langle (X_n(B))\rangle$ with the induced endomorphism action of $M$. Let $E\subseteq \langle (X_n(B))\rangle$
consist of all left-variable elements. Note that $E$ is a right ideal in $\langle (X_n(B))\rangle$ and $\langle (X_n(B))\rangle$ is $E$-directed. Now Corollary~\ref{C:genj} produces 
a basic double sequence $(v_i, v_i')$ in $\langle (X_n)\rangle$, with $x$ occurring in each $v_i$ and with $v_i'\in E$ for each $i$. 
Fix $b\in B$, and let 
\[
w_0=v_{0}[b]\; \hbox{ and }\; w_i = v'_{i-1} v_{i} \hbox{ for }i\geq 1. 
\]
Clearly $x$ does not occur in $w_0$ and each $w_i$ is left-variable for $i\geq 1$. The coloring is fixed on sequences of the form \eqref{E:fixat} since it is fixed 
on ones of the form \eqref{E:refix}.

{\bf 3.} {\bf Gowers' theorem.} The monoid $G_k$ is defined in Section~\ref{Su:exa}. 
Gowers' Ramsey theorem from \cite{Go}, see \cite[Theorem~2.22]{To}, is obtained by applying Corollary~\ref{C:genseq} 
to $X_n= G_k$ with the left multiplication action and with the distinguished element $1_{G_k}$. 
We note that ${\mathbb X}(G_k)$ is linear, and we apply Corollary~\ref{C:genseq} as in the proof of 
Corollary~\ref{C:moral}(i).

{\bf 4.} {\bf Lupini's theorem.} Lupini's Ramsey theorem from \cite{Lu} is an infinitary version of a Ramsey theorem found by Barto{\v s}ova and Kwiatkowska in \cite{BK}. To prove it we consider the monoid $I_k$ defined in Section~\ref{Su:exa}. We take 
for $X_n = \{ 0, \dots, k-1\}$ with the natural action of $I_k$ and the distinguished element $k-1$. 
The result is obtained by applying Lemma~\ref{L:aux} and Theorem~\ref{T:abst}. We expand on this theme in Section~\ref{S:none}.

\subsection{The monoids $I_n$}\label{S:none}

We analyze here the monoids $I_n$, $n\in {\mathbb N}$, $n>0$, defined in Section~\ref{Su:exa}. 
As usual, we identify a natural number $n$ with the set $\{ 0, \dots, n-1\}$. 
The monoid $I_n$ is the monoid of all functions
$f\colon n\to n$ such that $f(0)=0$ and $f(i-1)\leq f(i)\leq f(i-1)+1$, for all $0<i<n$, taken with composition. We consider these monoids, on the one hand, 
to illustrate our notion of Ramsey monoids and, on the other hand, 
to answer a question of Lupini from \cite{Lu}. 
We will prove the following theorem.

\begin{theorem}\label{T:nor}
The monoids $I_n$, for $n\geq 4$, are not Ramsey. The monoids $I_1$, $I_2$, and $I_3$ are Ramsey.
\end{theorem}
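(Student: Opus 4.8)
The plan is to reduce the whole statement to Corollary~\ref{C:moral}. Each $I_n$ is R-trivial (as recorded in Example~2 of Section~\ref{Su:exa}), hence in particular almost R-trivial, so by Corollary~\ref{C:moral} the monoid $I_n$ is Ramsey if and only if the partial order ${\mathbb X}(I_n)$ is linear. Thus the theorem is equivalent to the purely combinatorial claim that ${\mathbb X}(I_n)$ is linearly ordered exactly when $n\leq 3$. I will also use, as in Example~2, that R-triviality lets us identify ${\mathbb X}(I_n)$ with $I_n$ carrying the order $f\leq g\Longleftrightarrow f\in gI_n$, i.e. $f=g\circ h$ for some $h\in I_n$; I will write elements of $I_n$ as tuples $(f(0),\dots,f(n-1))$.

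For the negative direction ($n\geq 4$) the driving observation is that if $f=g\circ h$ with $h\in I_n$, then $h(i)\leq i$ for every $i$ (since $h(0)=0$ and $h$ grows by at most $1$ at each step) and $g$ is nondecreasing, so $f(i)=g(h(i))\leq g(i)$ for all $i$; hence $f\leq g$ in ${\mathbb X}(I_n)$ forces $f(i)\leq g(i)$ pointwise. I would then exhibit the two functions $f=(0,1,1,\dots,1)$, that is $f(i)=\min(i,1)$, and $g=(0,0,1,2,\dots,n-2)$, that is $g(0)=g(1)=0$ and $g(i)=i-1$ for $i\geq 1$. Both lie in $I_n$, and since $f(1)=1>0=g(1)$ while $g(n-1)=n-2>1=f(n-1)$ (this is where $n\geq 4$ enters), the pointwise condition shows that neither $f\leq g$ nor $g\leq f$ holds. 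So ${\mathbb X}(I_n)$ is not linear, and $I_n$ is not Ramsey by Corollary~\ref{C:moral}(ii).

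For the positive direction I would simply enumerate $I_n$ for $n=1,2,3$ and compute the principal right ideals. For $n=1$, $I_1$ is trivial and ${\mathbb X}(I_1)$ is a single point. For $n=2$, $I_2=\{\mathrm{id},c\}$ with $c=(0,0)$, and $cI_2=\{c\}\subsetneq I_2=(\mathrm{id})I_2$, a two-element chain. For $n=3$, $I_3=\{(0,0,0),(0,0,1),(0,1,1),(0,1,2)\}$ with $(0,1,2)=\mathrm{id}$, and a direct computation of the right ideals gives $(0,0,0)I_3\subsetneq(0,0,1)I_3\subsetneq(0,1,1)I_3\subsetneq I_3$, again a chain; in particular ${\mathbb X}(I_3)$ is linear. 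In all three cases Corollary~\ref{C:moral}(i) then yields that $I_n$ is Ramsey, using again that $I_n$ is almost R-trivial. There is no genuine obstacle here once Corollary~\ref{C:moral} is available: the content is exactly the pointwise-domination remark together with a handful of finite computations, and the only point requiring a little care is verifying that the pair $f,g$ for $n\geq 4$ is truly $\leq$-incomparable, for which that remark is precisely what is needed.
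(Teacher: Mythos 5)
Your proposal is correct, and the reduction to Corollary~\ref{C:moral} via R-triviality of $I_n$ is exactly the paper's first step; but the combinatorial core --- deciding linearity of ${\mathbb X}(I_n)$ --- is handled quite differently. The paper builds a recursive presentation of $I_n$ by repeatedly adjoining a tetris-type generator (the operation $\mu$, Lemmas~\ref{L:mann} and \ref{L:tech}, Propositions~\ref{P:repr} and \ref{P:recur}), derives a recursive description of $\leq_{I_{n+1}}$ in terms of $\leq_{I_n}$, observes that $\leq_{I_{n+1}}$ restricts to $\leq_{I_n}$ so that non-linearity propagates upward from $I_4$, and then locates an incomparable pair inside $I_4$ symbolically. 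You replace all of that with the single observation that $f\in gI_n$ forces the pointwise inequality $f(i)\leq g(i)$ (because $h\in I_n$ satisfies $h(i)\leq i$ and $g$ is nondecreasing), which immediately certifies the incomparability of your explicit pair $f=(0,1,\dots,1)$ and $g=(0,0,1,\dots,n-2)$ for $n\geq 4$, together with brute-force enumeration of the (at most four-element) monoids $I_1,I_2,I_3$ and their principal right ideals; I checked these computations and they are right, including the chain $(0,0,0)I_3\subsetneq(0,0,1)I_3\subsetneq(0,1,1)I_3\subsetneq I_3$. Your route is shorter, more elementary, and self-contained for the purposes of Theorem~\ref{T:nor}; what it does not provide is the full recursive description of $(I_n,\leq_{I_n})$, which the paper uses afterwards to write out $\leq_{I_4}$ explicitly and to exhibit $I_4$ as a union of two Ramsey submonoids, and which the author flags as being of independent interest.
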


We now state a theorem and question of Lupini \cite{Lu} in our terminology. For $k\in {\mathbb N}$,
let $w_k$ be a finite word in the alphabet $n=\{ 0, 1, \dots, n-1\}$ that contains an occurrence $n-1$.
Let $I_n(w_k)$  be equal to the set $\{ f(w_k)\colon f\in I_n\}$, where $f(w_k)$ is the word obtained from $w_k$ by applying $f$
letter by letter. We take $I_n(w_k)$ with the natural action of $I_n$ and with $w_k$ as the distinguished element.
Note that if $w_k$ is the word of length one whose unique letter is $n-1$, then $I_n(w_k)= n$ with the natural action of $I_n$ on $n$.

\begin{theorem*}[Lupini~\cite{Lu}]
Let $n>0$, and let $w_k=(n-1)$. Then the sequence of pointed $I_n$-sets $(I_n(w_k))_k$ has the Ramsey property.
\end{theorem*}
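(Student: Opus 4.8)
\emph{Proof proposal.}
The plan is to exploit a collapse special to this particular sequence of pointed $I_n$-sets. Although $I_n$ is not Ramsey for $n\ge 4$ and no single maximal element of ${\mathbb Y}(I_n)$ is fixed downward by all of $I_n$ (so Corollary~\ref{C:genseq} does not directly apply), the set $n=\{0,\dots,n-1\}$ with its natural $I_n$-action behaves like a linearly ordered object, because $\{a(k)\colon a\in I_n\}=\{0,\dots,k\}$ for each $k$. Concretely, I would put $S=\langle (X_m)\rangle$ with $X_m=n$, carrying the natural endomorphism action $\alpha$ of $I_n$ (apply $a$ letter by letter), and let $I\subseteq S$ be the two-sided ideal of words in which $n-1$ occurs; then $S$ is $I$-directed, so by Lemma~\ref{L:dire}(i) the set $\gamma I=\{\cu\in\gamma S\colon I\in\cu\}$ is a compact two-sided ideal of $\gamma S$. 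The crucial observation is that $n$, ordered by $\le$ and equipped with the action $a\cdot k=a(k)$, is a \emph{strong} $M$-partial order that is a forest: it is linearly ordered, and for $a\in I_n$ and $y\in n$,
\[
\{a(x)\colon x\le y\}=\{0,1,\dots,a(y)\}=\{x\colon x\le a(y)\},
\]
where the first equality uses exactly that $a$ is nondecreasing with $a(0)=0$ and steps in $\{0,1\}$. Since $n$ is linear, $\langle n\rangle=n$ with $\vee=\max$ --- a \emph{finite} semigroup, and this finiteness is what makes everything work.

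Next I would produce the homomorphism needed for Theorem~\ref{T:abst}, in the spirit of Corollary~\ref{C:yet}(i) but with the forest $n$ replacing ${\mathbb Y}(M)$. Let $W\subseteq S$ be the set of words all of whose letters lie in $\{0,n-1\}$, and fix an ultrafilter $u$ on $S$ with $I\in u$ and $W\in u$ (possible, since $I\cap W\ne\emptyset$). Define $f\colon n\to\gamma S$ by $f(k)=a(u)$ for any $a\in I_n$ with $a(n-1)=k$; this is well defined, for if $a(n-1)=b(n-1)$ then $a$ and $b$ agree on $\{0,n-1\}$, so $W\subseteq\{w\in S\colon a(w)=b(w)\}$, whence $a(u)=b(u)$. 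A short computation gives that $f$ is $M$-equivariant and $f(n-1)=u\in\gamma I$. Now apply Lemma~\ref{L:aux} with $U=\gamma S$ --- its proof applies verbatim with the compact two-sided ideal $\gamma I$ in place of $I(U)$ --- to obtain an $M$-equivariant map $g\colon n\to E(\gamma S)$ that reverses $\le$ to $\le^{\gamma S}$ and satisfies $g(n-1)\in\gamma I$. As $g$ takes idempotent values and reverses a linear order, $g$ is a semigroup homomorphism $\langle n\rangle=(n,\max)\to\gamma S$. Forming the point based total function array $\mathcal A=\langle n\rangle(\beta)_{n-1}$ over $\langle n\rangle$ (so that $a(\bullet)=a(n-1)$), one checks that $(f',g)$ with $f'(\bullet)=g(n-1)$ is a homomorphism $\mathcal A\to\gamma(S(\alpha))$, using $M$-equivariance of $g$, and that $I\in f'(\bullet)$.

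Then Theorem~\ref{T:abst} applies with $D=I$: for any finite coloring of $S$ there is a basic sequence $(w_i)$ of words each containing $n-1$ on which the coloring is $\mathcal A$-tame. Here $\Lambda(\bullet)=\{a(n-1)\colon a\in I_n\}=n$, so condition \eqref{E:addi} holds automatically and $\mathcal A$-tameness says that the color of $a_0(w_{i_0})\cdots a_l(w_{i_l})$ depends only on $a_0(n-1)\vee\cdots\vee a_l(n-1)=\max_i a_i(n-1)\in n$. If some $a_j=1_{I_n}$, then $a_j(n-1)=n-1$ and this maximum is $n-1$; hence all words $a_0(w_{i_0})\cdots a_l(w_{i_l})$ with at least one $a_j=1_{I_n}$ get the single color attached to $n-1$. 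Since $n-1$ occurs in every $w_i$, this is precisely the Ramsey property of $(I_n(w_k))_k$.

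The decisive, least routine step is the first one: recognizing that $n$ with its natural action is a \emph{strong} $M$-partial order. This fails for general pointed $I_n$-sets --- consistent with $I_n$ not being Ramsey for $n\ge 4$ --- and rests essentially on the $0/1$-step description of $I_n$; it is what lets us work with the finite semigroup $(n,\max)$ instead of $\langle{\mathbb Y}(I_n)\rangle$ and reduce $\mathcal A$-tameness to control by the coarse invariant $\max_i a_i(n-1)$. The only other thing to flag is the harmless extension of Lemma~\ref{L:aux} from $I(U)$ to an arbitrary compact two-sided ideal, which is what lets us force $g(n-1)\in\gamma I$.
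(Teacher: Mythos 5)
Your proposal is correct and follows exactly the route the paper indicates for Lupini's theorem (Section~\ref{Su:conc}, item 4): bypass Corollary~\ref{C:genseq} and instead apply Lemma~\ref{L:aux} directly to the linear strong $I_n$-partial order $n=\{0,\dots,n-1\}$, then feed the resulting homomorphism into Theorem~\ref{T:abst}. The paper leaves this as a two-line sketch, and your elaboration --- the verification that $n$ is a strong $M$-partial order, the ultrafilter concentrating on $W$ to make $a\mapsto a(\cu)$ factor through $a\mapsto a(n-1)$, and the (indeed harmless) replacement of $I(U)$ by the clopen two-sided ideal $\gamma I$ in Lemma~\ref{L:aux} --- fills in the omitted details correctly.
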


In \cite{Lu}, Lupini asked the following natural question:
does $(I_n(w_k))_k$ have the Ramsey property for every choice of words $w_k$, $k\in {\mathbb N}$?

The following corollary to Theorem~\ref{T:nor} answers this question in the negative.

\begin{corollary}\label{C:nega}
Let $n\geq 4$. For $k\in {\mathbb N}$, let $w_k = (01\cdots (n-1))$. Then the sequence of pointed $I_n$-sets $(I_n(w_k))_k$
does not have the Ramsey property.
\end{corollary}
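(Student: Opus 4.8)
The plan is to derive Corollary~\ref{C:nega} from Theorem~\ref{T:nor} by a reduction argument, showing that if the sequence $(I_n(w_k))_k$ with $w_k = (01\cdots(n-1))$ had the Ramsey property, then so would the sequence of pointed $I_n$-sets $X_k = I_n(v)$ where $v = (n-1)$ is the one-letter word, i.e. $X_k = n = \{0,\dots,n-1\}$ with its natural $I_n$-action; but for $n\geq 4$ the latter sequence fails the Ramsey property by Theorem~\ref{T:nor} combined with Corollary~\ref{C:moral}(ii), once we check that ${\mathbb X}(I_n)$ is not linear for $n\geq 4$. So the heart of the matter is the reduction map.

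First I would make explicit the retraction $n \to n$, namely any $f_0\in I_n$ whose image is a proper initial segment; more usefully, I would look for an $I_n$-equivariant map $I_n(w_k) \to I_n(v) = n$ sending the distinguished element $w_k = (01\cdots(n-1))$ to the distinguished element $n-1$. The obvious candidate is "read off the last letter": a word $f(w_k) = (f(0)f(1)\cdots f(n-1))$ has last letter $f(n-1)$, and the map $f(w_k)\mapsto f(n-1)\in n$ is well defined because $f$ is determined on the initial segment up to $n-1$ by monotonicity only partially — so I would instead use that $f\mapsto f(w_k)$ and $f\mapsto f(n-1)$ both factor appropriately, checking that $f(w_k) = g(w_k)$ implies $f = g$ (the word $(01\cdots(n-1))$ has all its letters, so $f$ is recovered coordinatewise), hence $I_n(w_k)\cong I_n$ as pointed $I_n$-sets via $f(w_k)\leftrightarrow f$, and then the "apply to the top element" map $I_n\to n$, $f\mapsto f(n-1)$, is $I_n$-equivariant and sends $1_{I_n}$ to $n-1$. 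This induces a homomorphism of partial semigroups $\langle (I_n(w_k))_k\rangle \to \langle (X_k)_k\rangle$ commuting with the $I_n$-actions and with the distinguished points, and it maps basic sequences to basic sequences.

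Next I would run the standard pullback: given a finite coloring $\chi$ of $\langle (X_k)_k\rangle$ witnessing the failure of the Ramsey property for $(X_k)_k$ (this coloring is produced explicitly in the proof of Corollary~\ref{C:moral}(ii) using two $\leq_{{\mathbb X}(I_n)}$-incomparable R-classes), I would pull it back along the above homomorphism to a finite coloring of $\langle (I_n(w_k))_k\rangle$. A basic sequence $(u_i)$ in $\langle (I_n(w_k))_k\rangle$ with each $u_i$ containing the distinguished element and with all words $a_0(u_{i_0})\cdots a_l(u_{i_l})$ (some $a_j = 1_{I_n}$) monochromatic would, upon applying the homomorphism, yield such a sequence in $\langle (X_k)_k\rangle$ — images of distinguished elements are distinguished elements, and the map is $I_n$-equivariant, so the two displayed bullet conditions transfer — contradicting Corollary~\ref{C:moral}(ii). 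Hence $(I_n(w_k))_k$ fails the Ramsey property.

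The remaining ingredient, which I would dispatch before the reduction, is that ${\mathbb X}(I_n)$ is non-linear for $n\geq 4$, so that Corollary~\ref{C:moral}(ii) applies to $X_k = {\mathbb X}(I_n)$ — but actually I need the non-linearity to produce the coloring on $\langle (X_k)_k\rangle$ with $X_k = n$, so I should instead arrange the incomparable R-classes to be represented by elements acting distinguishably on $n$; concretely, for $n\geq 4$ one exhibits two functions $f,g\in I_n$ with $f\notin gI_n$ and $g\notin fI_n$ (e.g. functions that ``stall'' at different coordinates), which is exactly the data used in the proof of Corollary~\ref{C:moral}(ii). The main obstacle I anticipate is purely bookkeeping: verifying that the ``apply $f$ to the maximal point'' map really is a morphism of pointed $I_n$-sets and that basicity and the coloring pull back cleanly through the concatenation structure; there is no deep difficulty, only the need to keep the identifications $I_n(w_k)\cong I_n$ and $I_n(v)\cong n$ straight and to confirm that surjectivity of $I_n\to n$, $f\mapsto f(n-1)$, holds (it does, since $I_n$ contains functions hitting every value in $n$). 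Once Theorem~\ref{T:nor} is in hand, Corollary~\ref{C:nega} is then immediate from this reduction.
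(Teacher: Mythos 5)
Your proposal contains the key observation the paper uses --- that $f(w_k)=g(w_k)$ forces $f=g$ because the word $01\cdots(n-1)$ contains every letter, so $I_n(w_k)\cong I_n$ as a pointed $I_n$-set with the left multiplication action and distinguished point $1_{I_n}$ --- but then you discard it by composing with the further projection $I_n\to n$, $f\mapsto f(n-1)$, and this step breaks the argument. Your reduction target is the sequence $X_k=n=I_n(v)$ with $v=(n-1)$, and that sequence \emph{has} the Ramsey property for every $n>0$: this is exactly Lupini's theorem, stated in the paper immediately before this corollary. Since an equivariant surjection of pointed $I_n$-sets only transports the Ramsey property \emph{forward} (from $I_n(w_k)$ to $n$), establishing that $(n)_k$ is fine tells you nothing about $(I_n(w_k))_k$; and there is no bad coloring of $\langle (n)_k\rangle$ to pull back in the first place.

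You half-notice the problem when you say you ``need the non-linearity to produce the coloring on $\langle (X_k)_k\rangle$ with $X_k=n$'' and propose to represent the incomparable R-classes ``by elements acting distinguishably on $n$.'' This cannot be arranged: the coloring in the proof of Corollary~\ref{C:moral}(ii) needs two elements $a,b$ with $[a]\notin b\,{\mathbb X}(M)$ and $[b]\notin a\,{\mathbb X}(M)$, and the analogue inside $n$ would require two elements of $n$ each lying in the image of one of $a,b$ but not the other; since the image of every $f\in I_n$ is an initial segment of $n$, these images are linearly ordered by inclusion and no such pair exists. The repair is simply to stop at the isomorphism you already proved: ${\mathbb X}(I_n)$ identifies with $I_n$ (left multiplication, distinguished point $1_{I_n}$) by R-triviality, it is non-linear for $n\geq 4$ by Theorem~\ref{T:nor} and Corollary~\ref{C:moral}, so $(I_n)_k$ fails the Ramsey property by Corollary~\ref{C:moral}(ii), and the isomorphism $I_n\cong I_n(01\cdots(n-1))$ transfers this failure directly. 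That is the paper's proof.
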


\begin{proof} By Theorem~\ref{T:nor}, $I_n$ is not Ramsey for $n\geq 4$. It follows, by Corollary~\ref{C:moral} and by R-trivialiity of
$I_n$, that the sequence $X_k = I_n$, $k\in {\mathbb N}$, does not have the Ramsey property, where $I_n$ is considered as
a pointed $I_n$-set with the left multiplication action and with $1$ as the
distinguished element. Note that $I_n$ is isomorphic as a pointed $I_n$-set with $I_n(01\cdots (n-1))$ as witnessed by the function
\[
I_n\ni f\to f(01\cdots (n-1))\in I_n(01\cdots (n-1)).
\]
Thus, since $w_k=01\cdots (n-1)$, the sequence $(I_n(w_k))_k$ does not have the Ramsey property.
\end{proof}

We will give a recursive presentation of the monoid $I_n$ that may be of some independent interest and usefulness for future
applications. It will certainly make it easier for us to manipulate symbolically elements of $I_n$ below.
In the recursion, we will start with a trivial monoid and adjoin a tetris
operation as in \cite{Go} at each step of the recursion.

First, we present a general extension operation that can be applied to certain monoids equipped with an endomorphism and a distinguished element. 
Let $M$ be a monoid, let $f\colon M\to M$ be an endomorphism, and let $t\in M$ be such that for all $s\in M$ we have
\begin{equation}\label{E:com}
st=tf(s).
\end{equation}
Define
\[
\mu(M, t, f)
\]
to be the triple
\[
(N, \tau, \phi),
\]
where $N$ is a monoid, $\tau$ is an element of $N$, and $\phi$ is an endomorphism of $N$ that are
obtained by the following procedure. Let $N$ be the disjoint union of $M$ and the set $\{ \tau s\colon s\in M\}$, where
$\tau$ is a new element and the expression $\tau s$ stands for the ordered pair $(\tau, s)$. For $s\in M$, we write
$\tau^0 s$ for $s$ and $\tau^1s$ for $\tau s$. Define a function
$\phi\colon N\to M\subseteq N$ by letting, for $s\in M$ and $e=0,1$,
\[
\phi(\tau^e s) = t^e f(s),
\]
where $t^e f(s)$ is a product computed in $M$.
Define multiplication on $N$ be letting, for $s_1, s_2\in M$, and $e_1, e_2= 0,1$,
\[
(\tau^{e_1}s_1) \cdot (\tau^{e_2} s_2) = \begin{cases}
\tau^{e_1}(s_1s_2), \text{ if $e_2=0$;}\\
\tau (\phi(\tau^{e_1}s_1)s_2), \text{ if $e_2=1$.}
\end{cases}
\]
where, on the right hand side, $s_1s_2$ and $\phi(\tau^{e_1}s_1)s_2$ are products computed in $M$.
We write $\tau$ for $\tau1_M$. Note that $s_1\cdot s_2 = s_1s_2$ for $s_1, s_2\in M$, $\tau\cdot s = \tau s$ for $s\in M$ and $\tau\cdot \tau = \tau t$.
We will omit writing $\cdot$ for multiplication in $N$.

The following lemma is proved by a straightforward computation.

\begin{lemma}\label{L:mann}
$N$ is a monoid, $\phi$ is an endomorphism of $N$, and, for all $\sigma\in N$, we have relation \eqref{E:com}, that is,
\[
\sigma\tau = \tau\phi(\sigma).
\]
\end{lemma}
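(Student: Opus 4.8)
\textbf{Proof plan for Lemma~\ref{L:mann}.}

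The plan is to verify the three assertions in turn, all by direct but careful case analysis on the exponents $e_1,e_2\in\{0,1\}$, using that $f$ is an endomorphism of $M$ and that $st=tf(s)$ holds for all $s\in M$. First I would record a few consequences of \eqref{E:com} that will be used repeatedly: iterating it gives $st^e=t^ef^e(s)$ for $e=0,1$ (and indeed for all exponents, though only $e\le 2$ is needed); in particular $t\cdot t = t\,t = t^2$ and, since $1_Mt=t1_M$, the element $\tau$ behaves coherently. I would also note at the outset that the definition of multiplication on $N$ can be summarized uniformly as $(\tau^{e_1}s_1)(\tau^{e_2}s_2)=\tau^{\max(e_1,e_2)}\bigl(\phi(\tau^{e_1}s_1)^{\,[e_2]} s_2\bigr)$ read appropriately — but rather than chase such a formula I would simply keep the two cases $e_2=0$ and $e_2=1$ as given.

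For associativity I would check $\bigl((\tau^{e_1}s_1)(\tau^{e_2}s_2)\bigr)(\tau^{e_3}s_3)=(\tau^{e_1}s_1)\bigl((\tau^{e_2}s_2)(\tau^{e_3}s_3)\bigr)$ by splitting on the value of $e_3$, and then within each branch on $e_2$ (the value of $e_1$ never forces further branching because it only ever enters through $\phi(\tau^{e_1}s_1)$, which is a single element of $M$). When $e_3=0$ both sides reduce, using associativity in $M$, to $\tau^{\max(e_1,e_2)}\bigl(\cdots s_2s_3\bigr)$. When $e_3=1$ the key identity needed is that $\phi$ is multiplicative on the already-formed product $\tau^{e_1}s_1\cdot\tau^{e_2}s_2$, i.e. $\phi\bigl((\tau^{e_1}s_1)(\tau^{e_2}s_2)\bigr)=\phi(\tau^{e_1}s_1)\phi(\tau^{e_2}s_2)$; this in turn unwinds to the relation $\phi(\tau^{e_1}s_1)\,t = t\,f\bigl(\phi(\tau^{e_1}s_1)\bigr)$, which is exactly \eqref{E:com} applied to the element $\phi(\tau^{e_1}s_1)\in M$. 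So the heart of the associativity check is that \eqref{E:com} propagates through $\phi$. The two-sided identity is $1_M$: $1_M\cdot(\tau^{e}s)=\tau^{e}s$ is immediate from the $e_2=0$, $s_1=1_M$ case, and $(\tau^{e}s)\cdot 1_M=\tau^{e}s$ from the $e_2=0$, $s_2=1_M$ case.

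Next I would show $\phi$ is an endomorphism. It clearly fixes $1_M$ since $\phi(\tau^0 1_M)=t^0f(1_M)=1_M$. For multiplicativity, $\phi\bigl((\tau^{e_1}s_1)(\tau^{e_2}s_2)\bigr)$ is computed by plugging the product formula into the definition $\phi(\tau^e s)=t^ef(s)$; on the other side $\phi(\tau^{e_1}s_1)\phi(\tau^{e_2}s_2)=t^{e_1}f(s_1)t^{e_2}f(s_2)$, and one moves the middle $t^{e_2}$ to the left using $f(s_1)t^{e_2}=t^{e_2}f^{e_2+1}(s_1)$ — i.e. \eqref{E:com} again, now applied to $f(s_1)$ — then uses that $f$ is an endomorphism. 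Comparing, the $e_2=0$ and $e_2=1$ branches match. Finally, the relation $\sigma\tau=\tau\phi(\sigma)$ for $\sigma=\tau^{e_1}s_1$ is the special case $\tau^{e_2}s_2=\tau=\tau^1 1_M$ of the multiplication rule: the left side is $(\tau^{e_1}s_1)(\tau 1_M)=\tau\bigl(\phi(\tau^{e_1}s_1)\cdot 1_M\bigr)=\tau\,\phi(\tau^{e_1}s_1)$, while the right side is $\tau\cdot\phi(\sigma)=\tau\,\phi(\tau^{e_1}s_1)$ since $\phi(\sigma)\in M$; so this one is essentially immediate once the product formula is in hand.

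I expect the only genuine obstacle to be bookkeeping: making sure the case split in associativity is organized so that the single nontrivial input — that \eqref{E:com} holds not just for elements of $M$ but, through the definition of $\phi$, survives application of $\phi$ — is isolated and invoked cleanly, rather than getting lost in the eight-or-so sub-cases. Everything else is forced by associativity and the endomorphism property in $M$.
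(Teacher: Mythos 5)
Your plan is correct and is precisely the ``straightforward computation'' the paper leaves to the reader: the one genuinely non-routine point --- that associativity in the $e_3=1$ branch reduces to multiplicativity of $\phi$, which in turn reduces to \eqref{E:com} applied to the element $\phi(\tau^{e_1}s_1)\in M$ --- is exactly the right observation. (Only a trivial slip: verifying $1_M\cdot\tau s=\tau s$ uses the $e_2=1$ branch of the product formula, not the $e_2=0$ one.)
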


Later, we will need the following technical lemma.

\begin{lemma}\label{L:tech}
For $\sigma \in N$ and $s\in M$, there exists $s'\in M$ such that $\tau s \sigma=\tau s s'$.
\end{lemma}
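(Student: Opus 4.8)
The plan is to induct on the structure of $\sigma$, distinguishing $\sigma\in M$ from $\sigma=\tau\sigma_0$ with $\sigma_0\in M$, and to push the leftmost $\tau$ in $\tau s\sigma$ all the way to the front using only the defining relations of $N$. First I would treat the easy case $\sigma\in M$: here $\tau s\sigma=\tau(s\sigma)$ by the first clause of the multiplication rule (with $e_2=0$), so $s'=s\sigma$ works. The interesting case is $\sigma=\tau^{1}\sigma_0=\tau\sigma_0$ for some $\sigma_0\in M$. Then $s\sigma=s(\tau\sigma_0)=(s\tau)\sigma_0$; since $s\in M$, the element $s\tau$ is computed by the second clause ($e_1=0$, $e_2=1$) as $\tau(\phi(s)\sigma_0)=\tau(\phi(s)\sigma_0)$, wait — more directly, apply Lemma~\ref{L:mann} (the relation $\sigma\tau=\tau\phi(\sigma)$ valid for all $\sigma\in N$) to get $s\tau=\tau\phi(s)$, hence
\[
\tau s\sigma=\tau s\tau\sigma_0=\tau(\tau\phi(s))\sigma_0=\tau\tau(\phi(s)\sigma_0)=\tau(t\,\phi(s)\sigma_0),
\]
using $\tau\tau=\tau t$ and associativity in $N$ (Lemma~\ref{L:mann} says $N$ is a monoid). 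Since $t,\phi(s),\sigma_0\in M$ and $M$ is closed under multiplication, $s':=t\,\phi(s)\sigma_0\in M$ and $\tau s\sigma=\tau s s'$?

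Here I need to be careful: the claim is $\tau s\sigma=\tau s s'$, not $\tau s\sigma=\tau s'$. So after obtaining $\tau s\sigma=\tau s''$ for some $s''\in M$ (by the computation above, $s''=s\,t\,\phi(s)\sigma_0$ once we also reabsorb — actually from $\tau s\sigma = \tau(t\phi(s)\sigma_0)$ we already have the product of $\tau$ with a single element of $M$), I would simply note that any element $\tau u$ with $u\in M$ can be rewritten: we want $u = s s'$. This is not possible in general unless $s$ is left-invertible. So the cleaner route is to observe that the statement $\tau s\sigma=\tau s s'$ is equivalent to $\tau s\sigma = (\tau s) s'$, i.e. we are multiplying the fixed element $\tau s\in N$ on the right by $\sigma$, and asking that the result lie in $(\tau s)M$. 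By the first clause of the multiplication rule, for any $u\in M$ we have $(\tau s)u=\tau(su)$, so $(\tau s)M=\{\tau(su):u\in M\}$. Thus the lemma asserts exactly that $(\tau s)\sigma\in\{\tau v: v\in sM\}$.

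So the real content is: compute $(\tau s)\sigma$ and check its "$M$-part" lies in $sM$. I would do this by the same case split. If $\sigma\in M$, $(\tau s)\sigma=\tau(s\sigma)$ and $s\sigma\in sM$, done. If $\sigma=\tau\sigma_0$, then by the second clause of multiplication (with $e_1=1$, writing $\tau s=\tau^1 s$, and $e_2=1$): $(\tau^1 s)(\tau^1\sigma_0)=\tau(\phi(\tau^1 s)\sigma_0)=\tau(t\,f(s)\sigma_0)$, using $\phi(\tau s)=t f(s)$ from the definition of $\phi$. Now the main obstacle: I must verify $t f(s)\sigma_0\in sM$. For this I use the hypothesis \eqref{E:com}, namely $st=tf(s)$ for all $s\in M$; hence $tf(s)=st$, so $tf(s)\sigma_0=s(t\sigma_0)\in sM$ with $s':=t\sigma_0\in M$. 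Therefore $(\tau s)\sigma=\tau(s\,t\sigma_0)=(\tau s)(t\sigma_0)$, giving the desired $s'=t\sigma_0$. The step I expect to be the main obstacle is precisely this last manipulation — recognizing that relation \eqref{E:com}, transported into $N$ via the definition of $\phi$, is exactly what lets the offending factor $t f(s)$ be absorbed back into $sM$; everything else is bookkeeping with the two clauses of the multiplication table and associativity from Lemma~\ref{L:mann}.
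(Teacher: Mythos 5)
Your final argument is correct and essentially identical to the paper's: in the nontrivial case $\sigma=\tau\sigma_0$ you compute $\tau s\,\tau\sigma_0=\tau\bigl(tf(s)\sigma_0\bigr)$ and then use relation \eqref{E:com}, $st=tf(s)$, to rewrite this as $\tau(st\sigma_0)=\tau s\,(t\sigma_0)$, arriving at the same witness $s'=t\sigma_0$ as the paper. The only blemish is the slip in your first pass on the easy case ($s'=s\sigma$ should be $s'=\sigma$), which your restructured version (``$s\sigma\in sM$'') already corrects.
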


\begin{proof} If $\sigma \in M$, then we can let $s'=\sigma$. Otherwise, $\sigma = \tau s_0$ for some $s_0\in M$.
Note that
\[
\tau s\sigma= \tau s \tau s_0 = \tau \tau f(s) s_0 = \tau t f(s) s_0 = \tau s t s_0,
\]
and we can let $s'= ts_0$.
\end{proof}

By recursion, we define a sequence of monoids with distinguished elements and endomorphisms.
Let $M_1$ be the unique one element monoid, let $t_1$ be its unique element, and let $f_1$ be its unique endomorphism.
Assume we are given a monoid $M_k$ for some $k\geq 1$ with an endomorphism $f_k$ of $M_k$
and an element $t_k$ with \eqref{E:com}. Define
\[
(M_{k+1}, t_{k+1}, f_{k+1})  = \mu(M_k, t_k, f_k).
\]

\begin{proposition}\label{P:repr}
For each $k\in {\mathbb N}$, $k>0$, $M_k$ is isomorphic to $I_k$.
\end{proposition}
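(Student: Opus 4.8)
The plan is to argue by induction on $k$, exhibiting for each $k$ an explicit isomorphism $\Psi_k\colon M_k\to I_k$ that additionally intertwines the distinguished element and endomorphism: $\Psi_k(t_k)$ should be a specific ``tetris'' element of $I_k$ and $\Psi_k\circ f_k = \hat f_k\circ \Psi_k$ for the corresponding endomorphism $\hat f_k$ of $I_k$. Carrying the extra structure along the induction is essential, since the recursion $\mu(M_k,t_k,f_k)$ only makes sense with $t_k$ and $f_k$ in hand; without it the inductive hypothesis would be too weak to continue. The base case $k=1$ is trivial: $M_1$ and $I_1$ are both the one-element monoid.

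For the inductive step I would first identify, inside $I_{k+1}$, the right analogue of the construction $\mu$. Concretely: $I_{k+1}$ consists of the non-decreasing surjections-onto-an-initial-segment $f\colon k+1\to k+1$ with $f(0)=0$ and unit increments. The element $t_{k+1}\in I_{k+1}$ should be taken to be the ``tetris'' map $T_{k+1}$ that sends $0\mapsto 0$ and $i\mapsto i-1$ for $1\le i\le k$ (equivalently $\min(i,\cdot)$-style collapse of the bottom two levels), and $\hat f_{k+1}\colon I_{k+1}\to I_{k+1}$ the endomorphism induced by ``delete the top coordinate and renormalize,'' i.e. the map $g\mapsto$ the element of $I_k\subseteq I_{k+1}$ obtained from $g$ by restricting. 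The key structural observation to verify is a decomposition of $I_{k+1}$ mirroring the disjoint union $N = M\sqcup \tau M$: every $g\in I_{k+1}$ either has image contained in $\{0,\dots,k-1\}$ (these form a copy of $I_k$, the ``$\tau^0$'' part) or has image all of $k+1$, in which case $g$ factors uniquely as $T_{k+1}\circ g'$ with $g'$ ranging over $I_k$-many maps (the ``$\tau^1$'' part); one checks the multiplication table of $I_{k+1}$ matches the case split in the definition of multiplication on $N$, using the commutation relation \eqref{E:com} (which holds in $I_{k+1}$ by Lemma~\ref{L:mann} on the $N$ side and by a direct check $gT_{k+1}=T_{k+1}\hat f_{k+1}(g)$ on the $I$ side). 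With these identifications, the isomorphism $\Psi_k\colon M_k\to I_k$ from the inductive hypothesis extends to $\Psi_{k+1}\colon M_{k+1}\to I_{k+1}$ by $\Psi_{k+1}(\tau^e s) = T_{k+1}^{\,e}\cdot \iota(\Psi_k(s))$, where $\iota\colon I_k\hookrightarrow I_{k+1}$ is the natural inclusion; that $\Psi_{k+1}$ is a bijective homomorphism carrying $t_{k+1}\mapsto T_{k+1}$ and conjugating $f_{k+1}$ to $\hat f_{k+1}$ then reduces to the matching of multiplication tables and $\phi$ with $\hat f_{k+1}$ already recorded.

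The main obstacle I anticipate is not the algebra of $\mu$ itself but pinning down the combinatorial bijection between the ``$\tau^1 M_k$'' part and the corresponding half of $I_{k+1}$, and checking that composition on the $I_{k+1}$ side genuinely obeys the two-case formula defining multiplication in $N$ — in particular that when one composes two ``surjective onto $k+1$'' elements, the bookkeeping of how the tetris map $T_{k+1}$ absorbs into $\hat f_{k+1}$ exactly reproduces the clause $(\tau^{e_1}s_1)(\tau^{e_2}s_2)=\tau(\phi(\tau^{e_1}s_1)s_2)$. This is a finite, mechanical verification, but it is where the proof has content; once it is done, the inductive step is essentially formal. (Lemma~\ref{L:mann} and Lemma~\ref{L:tech} give the ambient relations on the $N$ side that make the two descriptions dovetail.)
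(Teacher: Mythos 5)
Your overall strategy coincides with the paper's: the proof there also proceeds by exhibiting, inside $I_{k+1}$, a copy of $I_k$, a tetris element $t_{k+1}$ ($0\mapsto 0$, $i\mapsto i-1$), and an endomorphism $f_{k+1}$, and then checking that these satisfy the recursion defining $\mu(M_k,t_k,f_k)$. Your formulas for $T_{k+1}$ and (on a charitable reading) for $\hat f_{k+1}$ agree with the paper's. However, the ``key structural observation'' you propose to verify is false as stated, and this is exactly the point where the proof has content. The elements of $I_{k+1}$ with image contained in $\{0,\dots,k-1\}$ do not form a copy of $I_k$: since $|I_n|=2^{n-1}$ and the only element of $I_{k+1}$ surjective onto $k+1$ is the identity, that set has $2^k-1$ elements rather than $2^{k-1}$. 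Dually, the ``image all of $k+1$'' part consists of the identity alone, and the identity cannot be written as $T_{k+1}\circ g'$ at all, because $T_{k+1}\circ g'$ always has image inside $\{0,\dots,k-1\}$. So the dichotomy ``image $\subseteq\{0,\dots,k-1\}$ versus image $=k+1$'' is not the decomposition $N=M\sqcup\tau M$, and the matching of multiplication tables you plan to carry out would break down immediately (already for $k+1=3$: the three non-identity elements of $I_3$ would all land in the ``$\tau^0$'' part, which should have only two elements).

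The correct invariant is the value $g(1)\in\{0,1\}$ (forced since $g(0)=0$ and increments are at most $1$). The copy of $I_k$ inside $I_{k+1}$ is \emph{not} an inclusion that fixes values; it is the shift embedding $\iota(s)(0)=0$, $\iota(s)(i)=s(i-1)+1$ for $0<i\leq k$, whose image is exactly $\{g\in I_{k+1}\colon g(1)=1\}$ --- in particular $\iota(1_{I_k})=1_{I_{k+1}}$, so the identity (the unique element with full image) lies in the $\tau^0$ part, not the $\tau^1$ part. The $\tau^1$ part is $t_{k+1}\cdot\iota(I_k)=\{g\colon g(1)=0\}$, since $(t_{k+1}\circ\iota(s))(i)=s(i-1)$ for $i>0$. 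With this decomposition (and with $\hat f_{k+1}(g)=\iota(g{\restriction}\{0,\dots,k-1\})$, i.e. $\hat f_{k+1}(g)(i)=g(i-1)+1$), the rest of your argument --- verifying the commutation relation $g\,t_{k+1}=t_{k+1}\hat f_{k+1}(g)$ and the two-case multiplication formula --- goes through and reproduces the paper's proof. As written, though, the inductive step rests on a false bijection and does not close.
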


\begin{proof} One views $I_{k-1}$ as a submonoid of $I_k$, for $k>1$, identifying $I_{k-1}$ with its image under the isomorphic
embedding $I_{k-1}\ni s \to s' \in I_k$, where
\[
s'(i) = \begin{cases} 0, \text{ if $i=0$;}\\
s(i-1)+1, \text{ if $0<i<k$.}
\end{cases}
\]
One checks that $t_k\in I_k$ given by
\[
t_k(i) = \begin{cases}
0, \text{ if $i=0$;}\\
i-1, \text{ if $0<i<k$.}
\end{cases}
\]
and $f_k\colon I_k\to I_k$ given by
\[
f_k(s)(i) = \begin{cases} 0, \text{ if $i=0$;}\\
s(i-1)+1, \text{ if $0<i<k$.}
\end{cases}
\]
fulfill the recursive definition of $(M_k, t_k, f_k)$.
\end{proof}

Since, as proved in Section~\ref{Su:exa}, $I_n$ is R-trivial, the partial order ${\mathbb X}(I_n)$
can be identified with $I_n$. We will make this identification and write $\leq_{I_n}$ for $\leq_{{\mathbb X}(I_n)}$.
We have the following recursive formula for $\leq_{I_n}$. Obviously, $\leq_{I_1}$ is  the unique partial order on
the one element monoid.

\begin{proposition}\label{P:recur}
Let $t_{n+1}^{e_1} s_1, t_{n+1}^{e_2}s_2 \in I_{n+1}$ with $s_1, s_2\in I_n$ and $e_1, e_2=0,1$.
Then $t_{n+1}^{e_1} s_1\leq_{I_{n+1}} t_{n+1}^{e_2}s_2$ if and only if
\[
e_2\leq e_1 \,\hbox{ and }\, s_1\leq_{I_n} f_n^{e_1-e_2}(s_2).
\]
\end{proposition}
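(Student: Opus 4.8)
The plan is to unwind the definition of $\leq_{I_{n+1}}$ in terms of the recursive construction $\mu(M_n, t_n, f_n)$, using the identification of $I_{n+1}$ with $M_{n+1}$ from Proposition~\ref{P:repr}. Recall that, by R-triviality, $\sigma_1\leq_{I_{n+1}}\sigma_2$ means exactly $\sigma_1\in\sigma_2 I_{n+1}$, i.e.\ $\sigma_1=\sigma_2\rho$ for some $\rho\in I_{n+1}$. So I would fix $\sigma_i=t_{n+1}^{e_i}s_i$ with $s_i\in M_n$ and analyze the equation $t_{n+1}^{e_1}s_1 = (t_{n+1}^{e_2}s_2)\rho$, splitting on whether $\rho\in M_n$ or $\rho=t_{n+1}\rho'$ with $\rho'\in M_n$, and using the multiplication rules defining $N=M_{n+1}$ together with the commutation relation $\sigma\tau=\tau\phi(\sigma)$ from Lemma~\ref{L:mann} (here $\tau=t_{n+1}$, $\phi=f_n$ extended as in the construction).

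First I would prove the forward implication. Suppose $t_{n+1}^{e_1}s_1 = (t_{n+1}^{e_2}s_2)\rho$. If $\rho\in M_n$, then by the multiplication rule $(t_{n+1}^{e_2}s_2)\rho = t_{n+1}^{e_2}(s_2\rho)$, which forces $e_1=e_2$ (the ``number of $\tau$'s'' is determined since $N$ is a disjoint union of $M_n$ and $\tau M_n$) and $s_1=s_2\rho$, i.e.\ $s_1\leq_{I_n}s_2 = f_n^{0}(s_2)$, and $e_2\leq e_1$ holds with equality. If instead $\rho=t_{n+1}\rho'$, then $(t_{n+1}^{e_2}s_2)(t_{n+1}\rho') = t_{n+1}(\phi(t_{n+1}^{e_2}s_2)\rho') = t_{n+1}(t_n^{e_2}f_n(s_2)\rho')$ using $\phi(\tau^{e}s)=t^{e}f_n(s)$; this has exactly one $\tau$, so $e_1=1$, and $s_1 = t_n^{e_2}f_n(s_2)\rho'$. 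When $e_2=0$ this gives $s_1\leq_{I_n}f_n(s_2)=f_n^{e_1-e_2}(s_2)$ and $e_2=0<1=e_1$. When $e_2=1$ this gives $s_1 = t_n f_n(s_2)\rho'$; but $t_n$ and $f_n$ on $M_n$ satisfy the same structure one level down, so $t_n f_n(s_2)\in M_n$ and I want $s_1\leq_{I_n}f_n^{0}(s_2)=s_2$. For this I would invoke the commutation relation at level $n$: $t_n f_n(s_2)=s_2 t_n$, hence $s_1 = s_2 t_n\rho'$, so $s_1\in s_2 M_n$, i.e.\ $s_1\leq_{I_n}s_2$, and $e_2\leq e_1$ holds with equality.

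For the converse I would simply run these computations backwards: given $e_2\leq e_1$ and $s_1\leq_{I_n}f_n^{e_1-e_2}(s_2)$, write $s_1 = f_n^{e_1-e_2}(s_2)\eta$ for some $\eta\in M_n$ and exhibit an explicit $\rho\in I_{n+1}$ (namely $\rho=\eta$ if $e_1=e_2$, and $\rho=t_{n+1}\eta$ together with the identity $t_n^{e_2}f_n(s_2)=f_n^{e_1-e_2}(s_2)$-type bookkeeping when $e_1>e_2$, i.e.\ $e_1=1,e_2=0$) with $t_{n+1}^{e_1}s_1=(t_{n+1}^{e_2}s_2)\rho$. The main obstacle, and the only place requiring genuine care rather than bookkeeping, is the case $e_1=e_2=1$ in the forward direction: there one must move a $t_n$ past $f_n(s_2)$ using the commutation relation $\sigma t_n = t_n f_n(\sigma)$ at level $n$ (valid by Lemma~\ref{L:mann} applied inductively, or directly from the explicit formulas in Proposition~\ref{P:repr}) to convert membership in $t_n f_n(s_2) M_n$ into membership in $s_2 M_n$. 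Once that identity is in hand, every case reduces to matching the number of leading $\tau$'s and then comparing the $M_n$-parts, and the statement follows.
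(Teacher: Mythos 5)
Your proposal is correct and follows essentially the same route as the paper: interpret $\leq_{I_{n+1}}$ as divisibility $t_{n+1}^{e_1}s_1=(t_{n+1}^{e_2}s_2)\rho$ in $\mu(M_n,t_n,f_n)$, use the disjoint-union structure of $N=M_n\sqcup \tau M_n$ to match the number of leading $\tau$'s, and apply the commutation relation $\sigma\tau=\tau\phi(\sigma)$ to handle the cases where a $\tau$ must be moved past an element of $M_n$. The only cosmetic difference is that the paper packages the $e_1=e_2=1$ computation as Lemma~\ref{L:tech} and organizes the case split by the values of $(e_1,e_2)$ rather than by the form of $\rho$, but the content is identical.
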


\begin{proof} By Proposition~\ref{P:repr}, we regard $(I_{n+1}, t_{n+1}, f_{n+1})$ as obtained from the triple
$(I_n, t_n, f_n)$ via operation $\mu$.  In particular, we regard $I_n$ as a submonoid of $I_{n+1}$. We
also have $f_{n+1}(s) = f_n(s)$, for $s\in I_n$.

($\Leftarrow$) If $e_0=e_1$, the implication is obvious. The remaining case is $e_2=0$ and $e_1=1$. In this case,
we have $s_1\leq_{I_n} f_n(s_2)$, that is, $s_1 = f_n(s_2)s'$ for some $s'\in I_n$. But then
\[
t_{n+1} s_1 = t_{n+1} f_n(s_2)s' = s_2 (t_{n+1} s'),
\]
and $t_{n+1} s_1 \leq_{I_{n+1}} s_2$ as required.

($\Rightarrow$) Note that it is impossible to have $s_1\leq_{I_{n+1}} t_{n+1} s_2$ for $s_1, s_2\in I_n$. Indeed, this inequality
would give $s_1= t_{n+1} s_2\sigma$ for some $\sigma\in I_{n+1}$, which would imply, by Lemma~\ref{L:tech}, that
$s_1= t_{n+1} s_2s'$ for some $s'\in I_n$. This is a contradiction since $s_2s'\in I_n$. Thus,
$t_{n+1}^{e_1} s_1\leq_{I_{n+1}} t_{n+1}^{e_2}s_2$ implies $e_2\leq e_1$.

If $e_1=e_2=0$, then we have $s_1\leq_{I_{n+1}} s_2$, which means $s_1 = s_2\sigma$ for some $\sigma\in I_{n+1}$. If
$\sigma\in I_n$, then $s_1\leq_{I_n}s_2$, as required. Otherwise, $\sigma = \tau s'$ for some $s'\in I_n$, which gives
\[
s_1 = s_2\tau s' = \tau (f_n(s_2) s'),
\]
which is impossible since $f_n(s_2) s'\in I_n$.

If $e_1=e_2=1$, then we have $t_{n+1} s_1\leq_{I_{n+1}} t_{n+1} s_2$,
which means $t_{n+1} s_1 = t_{n+1} s_2 \sigma$ for some $\sigma\in I_{n+1}$.
By Lemma~\ref{L:tech}, this equality implies $t_{n+1} s_1 = t_{n+1} (s_2 s')$ for some $s'\in I_n$. Since $s_2s'\in I_n$,
this equality gives $s_1=s_2 s'$, so $s_1\leq_{I_n}s_2$.

The last case to consider is $e_1=1$ and $e_2=0$, that is, $t_{n+1} s_1\leq_{I_{n+1}} s_2$. Then $t_{n+1} s_1 = s_2 \sigma$ for
some $\sigma \in I_{n+1}$. Note that $\sigma\not\in I_n$, so $\sigma = t_{n+1} s'$ for some $s'\in I_n$. But then we have
\[
t_{n+1} s_1 = s_2 t_{n+1} s' = t_{n+1} f_n(s_2) s',
\]
which implies $s_1 = f_n(s_2)s'$, that is, $s_1\leq_{I_n} f_n(s_2)$.
\end{proof}

\begin{proof}[Proof of Theorem~\ref{T:nor}]
It is easy to see from Proposition~\ref{P:recur} that the orders
$\leq_{I_1}$, $\leq_{I_2}$, $\leq_{I_3}$ are linear. So, by Corollary~\ref{C:moral}, $I_1$, $I_2$, and $I_3$ are Ramsey.

By Corollary~\ref{C:moral}, it remains to check that the partial order $(I_n, \leq_{I_n})$ is not linear for $n\geq 4$.
By Proposition~\ref{P:repr}, we regard $(I_{n+1}, t_{n+1}, f_{n+1})$ as obtained from the triple
$(I_n, t_n, f_n)$ via operation $\mu$.
It follows from Proposition~\ref{P:recur} that
$\leq_{I_{n+1}}$ restricted to $I_n$ is equal to $\leq_{I_n}$. Thus, it suffices to show that $(I_4, \leq_{I_4})$ is not linear.
Note that the image of $f_3$ is equal $I_2$ and $I_2$ has two elements. So there exists
$s_0\in I_3$ such that $f_3(s_0) \not= 1_{I_2}$. Thus, since $1_{I_2} = 1_{I_3}$, we get $f_3(s_0)<_{I_3} 1_{I_3}$.
It then follows from Proposition~\ref{P:recur} that $t_4$ and $s_0$ are not comparable with respect to $\leq_{I_4}$. Indeed,
\[
t_4 = t_4^11_{I_3}\hbox{ and } s_0 = t_4^0 s_0.
\]
Since $0<1$, we have $s_0\not\leq_{I_4} t_4$; since $1_{I_3}\not\leq_{I_3} f_3^{1-0}(s_0)$, we have $t_4\not\leq_{I_4} s_0$.
\end{proof}

The monoid $I_4$ is the first one among the monoids $I_n$, $n>0$,
that is not Ramsey. Using Propositions~\ref{P:repr} and \ref{P:recur},
one can compute $\leq_{I_4}$ as follows.
Since $I_3$ is linearly ordered, one can list the four elements of $I_3$ as $a_3\leq_{I_3}  a_2\leq_{I_3}  a_1\leq_{I_3} 1$.
Then $I_4$ is equal to the disjoint union $I_3\cup t_4 I_3$ and the order $\leq_4$ is the transitive closure of the relations
\[
\begin{split}
&a_3\leq_{I_4}  a_2\leq_{I_4}  a_1\leq_{I_4} 1;\\
&t_4a_3\leq_{I_4}  t_4a_2\leq_{I_4}  t_4a_1\leq_{I_4} t_4;\\
&t_4\leq_{I_4} a_1;\\
&t_4a_1\leq_{I_4} a_3.
\end{split}
\]
One can check by inspection that $M_1 = I_4\setminus \{ t_4\}$ and $M_2 = I_4\setminus \{ a_2, a_3\}$ are submonoids of $I_4$.
They are R-trivial as submonoids of an R-trivial monoid \cite{St}. One easily checks directly that $\leq_{M_1}$ and $\leq_{M_2}$
are linear, therefore, $M_1$ and $M_2$ are Ramsey by Corollary~\ref{C:moral}. Thus, $I_4$ is not itself Ramsey, but it is
the union of two Ramsey monoids.

\end{document}